\newtheorem{theorem}{Theorem}[section]
\newtheorem{prop}{Proposition}[section]
\newtheorem{lemma}{Lemma}[section]
\newtheorem{coro}{Corollary}[section]
\newtheorem{remark}{Remark}[section]
\newtheorem{exam}{Example}[section]
\newcommand{\ml}{\mathcal}
\newcommand{\mb}{\mathbb}
\DeclareMathOperator{\non}{non}
\DeclareMathOperator{\lin}{lin}
\DeclareMathOperator{\intt}{int}
\DeclareMathOperator{\extt}{ext}
\DeclareMathOperator{\bdd}{bdd}
\title{Large time behavior for the nonlinear dissipative Boussinesq equation}
\author[1]{Wenhui Chen\thanks{Wenhui Chen (wenhui.chen.math@gmail.com)}}
\affil[1]{School of Mathematics and Information Science, Guangzhou University, 510006 Guangzhou, China}
\author[2]{Hiroshi Takeda\thanks{Hiroshi Takeda (h-takeda@fit.ac.jp)}}
\affil[2]{Department of Intelligent Mechanical Engineering, Faculty of Engineering, Fukuoka Institute of Technology,  811-0295 Fukuoka, Japan}
\date{}
\begin{document}

		\maketitle
		\begin{abstract}
			\medskip
	In this paper, we study the nonlinear dissipative Boussinesq equation in the whole space $\mathbb{R}^n$ with $L^1$ integrable data. As our preparations, the optimal estimates as well as the optimal leading terms for the linearized model are derived by performing the WKB analysis and the Fourier analysis. Then, under some conditions on the power $p$ of nonlinearity, we demonstrate global (in time) existence of small data Sobolev solutions with different regularities to the nonlinear model by applying some fractional order interpolations, where the optimal growth ($n=2$) and decay ($n\geqslant 3$) estimates of solutions for large time are given. Simultaneously, we get a new large time asymptotic profile of global (in time) solutions. These results imply some influence of dispersion and dissipation on qualitative properties of solution.
			\\
			
			\noindent\textbf{Keywords:} dissipative Boussinesq equation, Cauchy problem, global existence of solution, optimal estimate, asymptotic profile\\
			
			\noindent\textbf{AMS Classification (2020)} 35G25, 35A01, 35B40 
		\end{abstract}
\fontsize{12}{15}
\selectfont

\section{Introduction}\label{Section-Introduction}\setcounter{equation}{0}
\subsection{Background of the nonlinear (dissipative) Boussinesq equations}
$\ \ \ \ $Since the last two centuries, the propagation of water waves over fluids has been widely considered in fluid mechanics, hydrogeology and coastal engineering. Some mathematical models are established to simulate it in different circumstances. Among them,   the 
innovative work \cite{Boussinesq-03} from J.V. Boussinesq  built the fundamental model for the propagation of long waves with small amplitude on the surface of shallow water, that is the so-called Boussinesq equation as follows:
\begin{align*}
	u_{tt}-u_{xx}+u_{xxxx}=(u^2)_{xx},
\end{align*}
in which the unknown function $u=u(t,x)\in\mb{R}$ denotes an elevation of the free surface of a fluid. This work \cite{Boussinesq-03} is innovative to give some scientific explanations to the phenomenon of permanent waves found by \cite{Russell}. Soon afterwards, a general Boussinesq equation in the sense of equipping the general nonlinearity  $[f(u;p)]_{xx}$ with a smooth power type function $f(u;p)$ was considered in the modeling of nonlinear strings \cite{Bona-Sachs=1988}, whose multi-dimensional model is written by
\begin{align}\label{Generalized_Boussinesq_Eq}
	u_{tt}-\Delta u+\Delta^2u=\Delta f(u;p).
\end{align}
The power type nonlinearity behaves as $f(u;p)=O(|u|^p)$ when $u\to 0$ with $p>1$. The Boussinesq equation \eqref{Generalized_Boussinesq_Eq} also can model nonlinear waves phenomena in the media with dispersion. Nowadays, qualitative properties of solutions to the Boussinesq equation \eqref{Generalized_Boussinesq_Eq} with various power type nonlinearities have been deeply studied, including well-posedness, blow-up and asymptotic behavior. We refer the interested reader to \cite{Levine=1974,Deift-Tomei-Trub=1982,Bona-Sachs=1988,Liu=1997,Cho-Ozawa=2007} and references given therein.

Because some resistances can not be neglected in real world problems, we naturally consider the PDEs in physic with dissipation, such as Navier-Stokes equations (viscous dissipation), thermoelastic equations (thermal dissipation), magnetohydrodynamics equations (magnetic dissipation). Let us turn to the Boussinesq equation \eqref{Generalized_Boussinesq_Eq} with dissipation. Actually, the presence of dissipation generally destroys the balance between the dispersion from the linear semigroup $\exp(\pm i|D|\langle D\rangle t)$ and the nonlinearity $\Delta f(u;p)$. Here, the symbol of pseudo-differential operator $|D|$ is denoted by $|\xi|$, and the Japanese bracket is marked by $\langle\xi\rangle:=\sqrt{1+|\xi|^2}$. In our consideration, provided that an energy production is allowed in the model, another form of the Boussinesq paradigm can be encountered. To be specific, the nonlinear dissipative Boussinesq equation arises in the literature (cf. \cite{Christov-Velarde=1994,Christov=2002,Christov-Maugin-Porubov=2007})  to describe the flow in thin viscous liquid layers or  the oscillation of nonlinear elastic beams, whose multi-dimensional version is addressed by
\begin{align}\label{Eq-Intro-Dissipation-Boussinesq}
	u_{tt}-\Delta u+\Delta^2u+2\mu\Delta^2u_t=\Delta f(u;p)
\end{align}
with the dissipation coefficient $\mu\in(0,1)$. This model may show the balance between the energy input and the dissipation modulated by the presence of nonlinearity and dispersion. To understand this balance and underlying physical phenomena, it is interesting to characterize some influence of dispersion and dissipation on qualitative properties of solutions.

Let us recall some recent researches for the dissipative Boussinesq equation \eqref{Eq-Intro-Dissipation-Boussinesq}. Firstly, the initial boundary value problem of \eqref{Eq-Intro-Dissipation-Boussinesq} has been studied mathematically by \cite{Wang-Su=2016,Wang-Su=2016-02}, in which global (in time) existence associated with large time behavior of solutions and blow-up of solutions were found. Later, the authors of \cite{Wang-Su=2019} investigated solvabilities and blow-up of solutions for the Cauchy problem of \eqref{Eq-Intro-Dissipation-Boussinesq} with the nonlinearity $f(u;p)=\pm |u|^{p-1}u$. The recent paper \cite{Su-Wang=2020} demonstrated global (in time) existence of Sobolev solutions to the Cauchy problem of \eqref{Eq-Intro-Dissipation-Boussinesq} with the nonlinearity satisfying $f(u;p)=O(|u|^p)$ when $u\to 0$, where its initial data belonged to suitable Riesz potential spaces of negative order, particularly, $u_1\in \dot{H}^{s-2}\cap \dot{H}^{-2}_q$ with  $q\in[1,2]$. As their preparations, \cite{Su-Wang=2020} also derived some decay estimates and asymptotic profiles of solutions to the corresponding linearized Cauchy problem. Concerning other studies on the Boussinesq equation with different dissipations, we refer to \cite{Varlamov-199601,Varlamov-199602,Ikehata-Soga=2015,Liu-Wang=2019,Liu-Wang=2020,Chen-Dao=2022} and references therein.

To the best of authors' knowledge, large time behavior (optimal growth/decay estimates and asymptotic profiles) for the nonlinear dissipative Boussinesq equation \eqref{Eq-Intro-Dissipation-Boussinesq} in $\mb{R}^n$ has never been studied in the literature, even for its corresponding linearized Cauchy problem with $L^1$ integrable data. We will answer these questions in the present work. 

\subsection{Main contributions of this paper}
$\ \ \ \ $In the present paper, we mainly consider the Cauchy problem for the nonlinear Boussinesq equation with strong dissipation as follows:
\begin{align}\label{Eq-NonLinear-Dissipative-Boussinesq}
	\begin{cases}
		u_{tt}-\Delta u+\Delta^2u+2\mu\Delta^2u_t=\Delta f(u;p),&x\in\mb{R}^n,\ t>0,\\
		u(0,x)=u_0(x),\ u_t(0,x)=u_1(x),&x\in\mb{R}^n,
	\end{cases}
\end{align}
with the dissipation coefficient $\mu\in(0,1)$, where the smooth function $f(u;p)$ behaves as a power type nonlinearity such that $f(u;p)=O(|u|^p)$ when $u\to0$ with $p>1$. We are interested in exploring some large time behavior of global (in time) solutions to the nonlinear Cauchy problem \eqref{Eq-NonLinear-Dissipative-Boussinesq} with $L^1$ initial integrable data.

As our preparations, by performing the WKB analysis and the Fourier analysis, we firstly study the linearized dissipative Boussinesq equation \eqref{Eq-Linear-Dissipative-Boussinesq} in Section \ref{Section-Linearized-Model}, whose solution $v=v(t,x)$ fulfills the following optimal estimates:
\begin{align}\label{Dnt}
	\|v(t,\cdot)\|_{L^2}\simeq \ml{D}_n(t):=\begin{cases}
		\sqrt{t}&\mbox{when}\ \ n=1\\
		\sqrt{\ln t}&\mbox{when}\ \ n=2\\
		t^{-\frac{n-2}{8}}&\mbox{when}\ \ n\geqslant3
	\end{cases}\ \ \mbox{and}\ \ \|v(t,\cdot)\|_{\dot{H}^{s}}\simeq t^{-\frac{2s+n-2}{8}}
\end{align}
for large time $t\gg1$, provided that $|\int_{\mb{R}^n}v_1(x)\mathrm{d}x|\neq 0$, with $n\geqslant 1$ and $s\geqslant 1$. Hereafter,  $h\simeq  g$ holds when $h\lesssim g\lesssim h$. Particularly, we discover the critical dimension $n=2$ for distinguishing the decisive role between dispersion (wave effect) and dissipation (damping effect) on large time behavior of solution in the $L^2$ norm. Furthermore,  by taking 
 \begin{align}\label{G0-G1-fun}
 	\ml{G}_0(t,x)&:=\ml{F}^{-1}_{\xi\to x}\left[\cos\left(|\xi|t+\frac{1}{2}|\xi|^3t\right)\mathrm{e}^{-\mu|\xi|^4t}\right],\notag\\  \ml{G}_1(t,x)&:=\ml{F}^{-1}_{\xi\to x}\left[\frac{1}{|\xi|}\sin\left(|\xi|t+\frac{1}{2}|\xi|^3t\right)\mathrm{e}^{-\mu|\xi|^4t}\right],
 \end{align}
we introduce  the functions listed below to be the large time profiles:
\begin{align}
	v^{(1,p)}(t,x)&:=\ml{G}_1(t,x)P_{v_1},\label{Profile-v1p}\\
	v^{(2,p)}(t,x)&:=\ml{G}_0(t,x)P_{v_0}-\nabla\ml{G}_1(t,x)\circ M_{v_1}-\frac{t}{8}\Delta^2\ml{G}_0(t,x)P_{v_1},\label{Profile-v2p}
\end{align}
with the symbol $\circ$ to be the inner product, where $P_{g}:=\int_{\mb{R}^n}g(x)\mathrm{d}x$ and $M_g:=\int_{\mb{R}^n}xg(x)\mathrm{d}x$. Our results show that $v^{(1,p)}=v^{(1,p)}(t,x)$ is the optimal leading term of solution in the sense of
\begin{align*}
\|v(t,\cdot)-v^{(1,p)}(t,\cdot)\|_{\dot{H}^s}\simeq t^{-\frac{2s+n}{8}}
\end{align*}
for large time $t\gg1$, provided that $|P_{v_1}|+|P_{v_0-\frac{2s+n}{64\mu}v_1}|+|M_{v_1}|\neq0$, with $n\geqslant 1$ and $s\geqslant0$. Moreover, $v^{(2,p)}=v^{(2,p)}(t,x)$ is a second profile of solution in the sense of
\begin{align*}
\|v(t,\cdot)-v^{(1,p)}(t,\cdot)-v^{(2,p)}(t,\cdot)\|_{\dot{H}^s}=o(t^{-\frac{2s+n}{8}})
\end{align*}
for large time $t\gg1$. These results imply some new sharp large time behavior of solution to the linearized dissipative Boussinesq equation \eqref{Eq-Linear-Dissipative-Boussinesq}.

Thanks to the derived optimal estimates for the linearized Cauchy problem, we construct suitable time-weighted Sobolev spaces and demonstrate a contraction mapping uniformly in time. Under some conditions on the exponent $p$ in the nonlinearity, by using some fractional order interpolations from the harmonic analysis, the global (in time) existence of small data Sobolev solutions, to the nonlinear dissipative Boussinesq equation \eqref{Eq-NonLinear-Dissipative-Boussinesq} is proved. Particularly, in the limit case $n=2$ with $p=4$, we cannot get uniformly (in time) bounded estimates in the usual  $H^s$ framework due to the logarithmic growth estimate for the linearized model, i.e. $\|v(t,\cdot)\|_{L^2}\simeq\sqrt{\ln t}$ as $t\gg1$ for $n=2$. To overcome this difficulty and retain this optimal growth rate, we introduce suitable solution spaces with additional $L^6$ and $L^{\infty}$ regularities for $n=2$ with $p=4$.  Next, we derive a new asymptotic profile of global (in time) solution to the nonlinear Cauchy problem \eqref{Eq-NonLinear-Dissipative-Boussinesq}, which is precisely given by
\begin{align}\label{Nonlinear-Profile}
u^{(1,p)}(t,x):=\ml{G}_1(t,x)P_{u_1}.
\end{align}
Additionally, the optimal growth ($n=2$) and decay ($n\geqslant 3$) estimates of solutions are derived
\begin{align}\label{Optimal-Est-Nonlinear}
	\|u(t,\cdot)\|_{L^2}\simeq \ml{D}_n(t)\ \ \mbox{and}\ \ \|u(t,\cdot)\|_{\dot{H}^{s}}\simeq t^{-\frac{2s+n-2}{8}}
\end{align}
for large time $t\gg1$, provided that $|\int_{\mb{R}^n}u_1(x)\mathrm{d}x|\neq 0$, with $n\geqslant 2$ and $s\geqslant 1$. The large time profile $u^{(1,p)}=u^{(1,p)}(t,x)$ explains some influence from dispersive and dissipation on asymptotic behavior of global (in time) solutions.


\section{Main results}\label{Section-Main-Results}\setcounter{equation}{0}
$\ \ \ \ $We state our first result on the global (in time) existence of small data Sobolev solution to the nonlinear dissipative Boussinesq equation \eqref{Eq-NonLinear-Dissipative-Boussinesq} with different regularities (depending on the parameter $s$) on initial data. Hereafter, we denote by $\lceil a\rceil:=\min\{A\in\mb{Z}:a\leqslant A\}$ the ceiling function, and we take $(a)_+:=\max\{a,0\}$ with $\frac{1}{(a)_+}:=\infty$ when $a\leqslant 0$.
\begin{theorem}\label{Thm-GESDS}
Let $p>1$ fulfilling one of the following conditions.
\begin{itemize}
	\item If $s\in[1,2]$, we assume  $\max\{2,\frac{n+2}{n-1}\}\leqslant p\leqslant\frac{n}{(n-2s)_+}$ when $n\geqslant 2$.
	\item If $s\in(2,\frac{n}{2}]$, we assume  $\lceil s-2\rceil+1< p\leqslant \frac{n-2s+4}{(n-2s)_+}$ when $n\geqslant 5$.
	\item If $s\in(\frac{n}{2},\infty)$, we assume $p\geqslant \max\{2,\frac{n+2}{n-1}\}$ and $p>s-1$ when $n\geqslant2$.
\end{itemize}
Then, there exists a sufficiently small constant $\epsilon_0>0$ such that for any initial data
\begin{align}\label{Data-Space}
	(u_0,u_1)\in\ml{A}_s:=(H^s\cap L^1)\times (H^{(s-4)_+}\cap L^1)
\end{align}
satisfying $\|(u_0,u_1)\|_{\ml{A}_s}\leqslant\epsilon_0$, the nonlinear dissipative Boussinesq equation \eqref{Eq-NonLinear-Dissipative-Boussinesq} admits a unique global (in time) Sobolev solution
\begin{align*}
	u\in\begin{cases}
		\ml{C}([0,T],H^s\cap L^6)&\mbox{for}\ \ n=2\ \ \mbox{with}\ \ p=4 \ \ \mbox{when}\ \ s\in[1,3),\\
		\ml{C}([0,T],H^s\cap L^6\cap L^{\infty})&\mbox{for}\ \ n=2\ \ \mbox{with}\ \ p=4 \ \ \mbox{when}\ \ s\in[3,\infty),\\
		\ml{C}([0,T],H^s)&\mbox{otherwise}.
	\end{cases}
\end{align*} 
Furthermore, the next estimates hold:
\begin{align*}
\|u(t,\cdot)\|_{L^2}&\lesssim \ml{D}_n(1+t)\|(u_0,u_1)\|_{\ml{A}_s},\\
\|u(t,\cdot)\|_{\dot{H}^s}&\lesssim (1+t)^{-\frac{2s+n-2}{8}}\|(u_0,u_1)\|_{\ml{A}_s}.
\end{align*}
\end{theorem}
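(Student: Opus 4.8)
The plan is to solve \eqref{Eq-NonLinear-Dissipative-Boussinesq} by a contraction argument built on the sharp linear estimates of Section~\ref{Section-Linearized-Model}. Writing $E_0(t),E_1(t)$ for the two linear propagators (so that the solution of the linearized problem \eqref{Eq-Linear-Dissipative-Boussinesq} is $v(t)=E_0(t)v_0+E_1(t)v_1$), Duhamel's principle recasts the nonlinear Cauchy problem as the fixed-point equation
\begin{align*}
u(t)=E_0(t)u_0+E_1(t)u_1+\int_0^t E_1(t-\tau)\Delta f(u(\tau);p)\,\mathrm{d}\tau=:N[u](t),
\end{align*}
since the nonlinearity enters the second-order equation as a source and is therefore propagated by $E_1$. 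First I would introduce the time-weighted evolution space whose norm reproduces exactly the target rates, namely
\begin{align*}
\|u\|_{X(T)}:=\sup_{0\leqslant t\leqslant T}\Big(\ml{D}_n(1+t)^{-1}\|u(t)\|_{L^2}+(1+t)^{\frac{2s+n-2}{8}}\|u(t)\|_{\dot{H}^s}\Big),
\end{align*}
augmented, in the critical case $n=2$ with $p=4$, by correspondingly weighted $L^6$ and (for $s\geqslant3$) $L^{\infty}$ terms so as to match the solution classes stated in the theorem.

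The linear part is immediate: the estimates \eqref{Dnt} give $\|E_0(t)u_0+E_1(t)u_1\|_{X(T)}\lesssim\|(u_0,u_1)\|_{\ml{A}_s}$ uniformly in $T$. The heart of the argument is the nonlinear bound $\|N[u]-E_0(t)u_0-E_1(t)u_1\|_{X(T)}\lesssim\|u\|_{X(T)}^p$ together with the Lipschitz estimate
\begin{align*}
\|N[u]-N[w]\|_{X(T)}\lesssim\big(\|u\|_{X(T)}^{p-1}+\|w\|_{X(T)}^{p-1}\big)\|u-w\|_{X(T)}.
\end{align*}
To obtain these I would split the Duhamel integral at $\tau=t/2$. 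On $[0,t/2]$ I apply the $L^1\cap L^2\to L^2$ (resp. $\dot{H}^s$) decay of $E_1(t-\tau)\Delta$ and control the source through $\|f(u(\tau);p)\|_{L^1}\lesssim\|u(\tau)\|_{L^p}^p$; on $[t/2,t]$ I use the high-frequency smoothing of $E_1(t-\tau)\Delta$ against $\|f(u(\tau);p)\|_{\dot{H}^{\sigma}}$. The power-type structure $|f(u;p)|\lesssim|u|^p$ and $|f(u;p)-f(w;p)|\lesssim(|u|^{p-1}+|w|^{p-1})|u-w|$, combined with the fractional Gagliardo--Nirenberg inequality and the fractional chain and Leibniz rules, converts every such source norm into $\|u\|_{X(T)}^p$ times an explicit power of $(1+\tau)$.

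The main obstacle is making the resulting $\tau$-integrals converge with precisely the rates $\ml{D}_n$ and $(1+t)^{-\frac{2s+n-2}{8}}$: admissibility of the Gagliardo--Nirenberg exponents $\theta\in[0,1]$ produces the lower thresholds on $p$ (such as $p\geqslant\max\{2,\frac{n+2}{n-1}\}$ and $p>s-1$), while integrability of the time weights as $\tau\to\infty$ pins down the upper thresholds in each regularity regime. The genuinely delicate point is the limit case $n=2$, $p=4$, where the linear $L^2$ growth is only $\sqrt{\ln t}$: naively estimating $\|u\|_{L^4}^4$ through $L^2$ and $\dot{H}^s$ alone reintroduces an uncontrollable logarithmic factor in the time integral, so one cannot close the iteration in the plain $H^s$ space. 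This is exactly why I would enrich $X(T)$ with the $L^6$ and $L^{\infty}$ norms: estimating the quartic nonlinearity through these higher Lebesgue exponents removes the borderline logarithm and lets the growth rate $\ml{D}_2(1+t)=\sqrt{\ln(1+t)}$ be reproduced sharply. Once the self-map and contraction bounds hold on a small ball of $X(T)$ with constants independent of $T$, Banach's fixed-point theorem furnishes the unique global solution, and its $X$-norm bound is precisely the asserted pair of estimates.
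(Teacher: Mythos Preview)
Your proposal is correct and follows essentially the same route as the paper: a Banach fixed-point argument in the time-weighted space $X_s(T)$, the Duhamel splitting at $t/2$ with $(L^2\cap L^1)$--$L^2$ estimates on $[0,t/2]$ and $L^2$--$L^2$ estimates on $[t/2,t]$, the fractional Gagliardo--Nirenberg, chain, Leibniz and powers rules to control $f(u;p)$ in $L^1\cap\dot{H}^{(s-2)_+}$ across the three regularity regimes, and the augmentation of $X_s(T)$ by $L^6$ and $L^\infty$ norms in the borderline case $n=2$, $p=4$ to kill the logarithmic loss. The only point you leave implicit is that the three ranges of $s$ call for different nonlinear tools (Gagliardo--Nirenberg alone when $s\leqslant2$, the fractional chain rule when $2<s\leqslant n/2$, and the fractional powers rule plus the embedding $\dot{H}^{s^*}\cap\dot{H}^s\hookrightarrow L^\infty$ when $s>n/2$), which is precisely what generates the distinct $p$-restrictions in each bullet.
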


\begin{exam}We state some examples for the suitable range of $p$ to guarantee global (in time) existence of solution with small initial data.
	\begin{itemize}
		\item When $n=2$, we take $p\geqslant 4$ and $p>s-1$ for $s\in[1,\infty)$.
		\item When $n=3$, we take $\frac{5}{2}\leqslant p\leqslant \frac{3}{(3-2s)_+}$ for $s\in[1,2]$, and $p>\max\{\frac{5}{2},s-1\}$ for $s\in(2,\infty)$.
		\item When $n=5$, we take $2\leqslant p\leqslant\frac{5}{5-2s}$ for $s\in[\frac{5}{4},2]$, and $2<p\leqslant \frac{9-2s}{(5-2s)_+}$ for $s\in(2,\frac{5}{2}]$, and $p\geqslant 2$ as well as $p>s-1$ for $s\in(\frac{5}{2},\infty)$.
		\item When $n=10$, we take $4<p\leqslant\frac{7-s}{(5-s)_+}$ for $s\in(\frac{13}{3},5]$, and $p>s-1$ for $s\in(5,\infty)$.
	\end{itemize}
\end{exam}
\begin{remark}
Let us consider the special case $n=2$ with $p=4$. Except the estimates stated in Theorem \ref{Thm-GESDS}, the solution also fulfills the additional estimates
\begin{align*}
\|u(t,\cdot)\|_{L^6}&\lesssim (1+t)^{-\frac{1}{6}}\|(u_0,u_1)\|_{\ml{A}_s},\\
\|u(t,\cdot)\|_{\dot{H}^1}&\lesssim (1+t)^{-\frac{1}{4}}\|(u_0,u_1)\|_{\ml{A}_s},
\end{align*}
and
\begin{align*}
\|u(t,\cdot)\|_{L^{\infty}}\lesssim (1+t)^{-\frac{1}{4}}\|(u_0,u_1)\|_{\ml{A}_s}\ \ \mbox{when}\ \ s\in[3,\infty).
\end{align*}
Thanks to these estimates, we may also get $L^q$ estimates of solutions by employing the Riesz-Thorin interpolation theorem.
\end{remark}
\begin{remark}
Although \cite[Theorems 5 and 6]{Su-Wang=2020} demonstrated global (in time) existence results for the nonlinear Cauchy problem \eqref{Eq-NonLinear-Dissipative-Boussinesq}, we cannot direct compare their ranges of $p$ with ours because the data spaces are quite different. To be specific, they considered $u_0\in \dot{H}^s\cap \dot{H}^{-1}_q$ and $u_1\in  \dot{H}^{s-2}\cap \dot{H}^{-2}_q$ with $q\in[1,2]$, namely, the homogeneous Sobolev spaces with additional Riesz potential spaces of negative order. But, in Theorem \ref{Thm-GESDS} we assume $u_0\in H^s\cap L^1$ and $H^{(s-4)_+}\cap L^1$, namely, inhomogeneous Sobolev spaces with additional $L^1$ integrability.
\end{remark}

To verify the optimality of the derived growth/decay estimates in Theorem \ref{Thm-GESDS} for large time, we have to estimate the lower bounds of global (in time) solution $u(t,\cdot)$ in the $L^2$ and $\dot{H}^s$ norms with $s\geqslant 1$. For this reason, we introduce the next theorem for a large time asymptotic profile of solutions, and obtain optimal lower bounds as a byproduct.

\begin{theorem}\label{Thm-Global-Profiles}
Let the exponent $p$, the regularity parameter $s$ and initial data $u_0,u_1$ satisfying the same hypotheses as those in Theorem \ref{Thm-GESDS}. Additionally, we assume $p>\frac{n+2}{n-1}$ for $n\geqslant 2$. Then, the global (in time) Sobolev solution introduced in Theorem \ref{Thm-GESDS} fulfills the next lower bound estimates:
\begin{align*}
	\|u(t,\cdot)\|_{L^2}&\gtrsim \ml{D}_n(t)|P_{u_1}|,\\
	\|u(t,\cdot)\|_{\dot{H}^s}&\gtrsim t^{-\frac{2s+n-2}{8}}|P_{u_1}|,
\end{align*}
for large time $t\gg1$, provided that $|P_{u_1}|\neq0$.
Furthermore, the following refined estimates hold:
\begin{align}\label{Bns}
\|u(t,\cdot)-u^{(1,p)}(t,\cdot)\|_{\dot{H}^s}=o(\ml{B}_{n,s}(t)):=\begin{cases}
	o(\ml{D}_n(t))&\mbox{when}\ \ s=0,\\
	o(t^{-\frac{2s+n-2}{8}})&\mbox{when}\ \ s\geqslant 1,
\end{cases}
\end{align}
for large time $t\gg1$, where the profile $u^{(1,p)}=u^{(1,p)}(t,x)$ is defined in \eqref{Nonlinear-Profile}.
\end{theorem}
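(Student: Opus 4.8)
The plan is to treat the little-o estimate \eqref{Bns} as the analytic core and to obtain the two lower bounds as immediate byproducts. Since $u^{(1,p)}(t,\cdot)=\ml{G}_1(t,\cdot)P_{u_1}$, the Fourier-side computations already performed for the linearized model in Section~\ref{Section-Linearized-Model} give $\|\ml{G}_1(t,\cdot)\|_{L^2}\simeq\ml{D}_n(t)$ and $\|\ml{G}_1(t,\cdot)\|_{\dot{H}^s}\simeq t^{-\frac{2s+n-2}{8}}$ for $t\gg1$, so that $\|u^{(1,p)}(t,\cdot)\|_{\dot{H}^s}\simeq t^{-\frac{2s+n-2}{8}}|P_{u_1}|$ and $\|u^{(1,p)}(t,\cdot)\|_{L^2}\simeq\ml{D}_n(t)|P_{u_1}|$. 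Granting \eqref{Bns}, the reverse triangle inequality
\begin{align*}
\|u(t,\cdot)\|_{\dot{H}^s}\geqslant\|u^{(1,p)}(t,\cdot)\|_{\dot{H}^s}-\|u(t,\cdot)-u^{(1,p)}(t,\cdot)\|_{\dot{H}^s}
\end{align*}
shows that the profile term strictly dominates the $o(\ml{B}_{n,s}(t))$ remainder for $t\gg1$ whenever $|P_{u_1}|\neq0$, which yields $\|u(t,\cdot)\|_{\dot{H}^s}\gtrsim t^{-\frac{2s+n-2}{8}}|P_{u_1}|$; the $L^2$ lower bound follows identically from the $s=0$ line of \eqref{Bns}. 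Hence everything reduces to establishing \eqref{Bns}.

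To prove \eqref{Bns}, I would apply Duhamel's principle and write $u=v+N$, where $v$ solves the linearized problem \eqref{Eq-Linear-Dissipative-Boussinesq} carrying the same data $(u_0,u_1)$ and, denoting by $\ml{E}_1(t)$ the propagator attached to the second datum,
\begin{align*}
N(t,\cdot):=\int_0^t\ml{E}_1(t-\tau)\,\Delta f(u(\tau,\cdot))\,\mathrm{d}\tau.
\end{align*}
Choosing $v_1=u_1$ in \eqref{Profile-v1p} shows that the leading profile $v^{(1,p)}$ of $v$ coincides with $u^{(1,p)}$ of \eqref{Nonlinear-Profile}, so the decomposition $u-u^{(1,p)}=(v-v^{(1,p)})+N$ separates the analysis. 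The linear remainder is already controlled in Section~\ref{Section-Linearized-Model}: there one has $\|v(t,\cdot)-v^{(1,p)}(t,\cdot)\|_{\dot{H}^s}\lesssim t^{-\frac{2s+n}{8}}=t^{-\frac14}t^{-\frac{2s+n-2}{8}}=o(\ml{B}_{n,s}(t))$ for $s\geqslant1$, and an analogous $t^{-\frac14}$ gain relative to $\ml{D}_n(t)$ in the $s=0$ case, so this term satisfies \eqref{Bns} with room to spare. It remains to prove $\|N(t,\cdot)\|_{\dot{H}^s}=o(\ml{B}_{n,s}(t))$.

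For the nonlinear term I would split $N=\int_0^{t/2}+\int_{t/2}^{t}$, noting first that the high-frequency parts are harmless: the damped characteristic root stays bounded away from the imaginary axis for $|\xi|\gtrsim1$, so those contributions decay exponentially given the Sobolev regularity of $f(u)$ furnished by Theorem~\ref{Thm-GESDS}, and only the low-frequency regime governs the rate. On $[0,t/2]$, where $t-\tau\geqslant t/2$, the low-frequency $L^1$--$\dot{H}^s$ bound for the propagator yields $\|\ml{E}_1(t-\tau)\Delta f(u(\tau,\cdot))\|_{\dot{H}^s}\lesssim(t-\tau)^{-\frac{2s+n+2}{8}}\|f(u(\tau,\cdot))\|_{L^1}$, while $f(u;p)=O(|u|^p)$ gives $\|f(u(\tau,\cdot))\|_{L^1}\lesssim\|u(\tau,\cdot)\|_{L^p}^p$. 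A Gagliardo--Nirenberg interpolation between $L^2$ and $\dot{H}^1$ combined with the decay rates of Theorem~\ref{Thm-GESDS} yields $\|u(\tau,\cdot)\|_{L^p}^p\lesssim(1+\tau)^{-\alpha p}$ with $\alpha p=\frac{(n-1)p-n}{4}$ (up to a harmless $\ln(1+\tau)$ factor when $n=2$), so this contribution is bounded by $t^{-\frac{2s+n+2}{8}}\int_0^{t/2}(1+\tau)^{-\alpha p}\,\mathrm{d}\tau$. On $[t/2,t]$, where the propagator is only weakly smoothing, I would instead use an $L^2$-based (energy-type) operator bound together with a fractional chain rule for $f(u)$; here $\tau\simeq t$ forces $\|f(u(\tau,\cdot))\|$ to decay at a positive power of $t$, so integrating over an interval of length $t/2$ again produces a quantity that is $o(\ml{B}_{n,s}(t))$. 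This last piece parallels the a priori bounds already used to close Theorem~\ref{Thm-GESDS}.

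The main obstacle is upgrading these bounds from $O$ to $o$, and this is exactly where the extra hypothesis $p>\frac{n+2}{n-1}$ enters. Writing $t^{-\frac{2s+n+2}{8}}=t^{-\frac12}t^{-\frac{2s+n-2}{8}}$, the $[0,t/2]$ piece is $o(\ml{B}_{n,s}(t))$ precisely when $\int_0^{t/2}(1+\tau)^{-\alpha p}\,\mathrm{d}\tau=o(t^{1/2})$, that is when $\alpha p>\tfrac12$, which unwinds to $(n-1)p>n+2$; the strict inequality is what converts the power/logarithmic borderline into a genuine little-o, whereas the endpoint $p=\frac{n+2}{n-1}$ would leave only $O(\ml{B}_{n,s}(t))$. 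Additional care is needed in the limit dimension $n=2$, where the $\sqrt{\ln t}$ growth of $\ml{D}_n$ must be carried through the interpolation (note $p>\frac{n+2}{n-1}=4$ there, so the logarithm enters only as a lower-order factor and does not spoil the gain), while for the $s=0$ branch of \eqref{Bns} the comparison is against $\ml{D}_n(t)$ rather than a pure power, so the interpolation exponents must be selected consistently in each dimension. Once both time-intervals are shown to be $o(\ml{B}_{n,s}(t))$, the decomposition $u-u^{(1,p)}=(v-v^{(1,p)})+N$ closes \eqref{Bns}, and the lower bounds follow as explained above.
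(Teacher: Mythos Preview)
Your proposal is correct and follows essentially the same route as the paper: Duhamel decomposition $u-u^{(1,p)}=(v-v^{(1,p)})+N$, the linear remainder handled by the estimates of Section~\ref{Section-Linearized-Model}, the nonlinear Duhamel term shown to be $o(\ml{B}_{n,s}(t))$ via the $[0,t/2]\cup[t/2,t]$ splitting already used in the proof of Theorem~\ref{Thm-GESDS}, and then the lower bounds by the reverse triangle inequality together with Lemma~\ref{Lem-Optimal-Est}. Your identification of $p>\frac{n+2}{n-1}$ as precisely the condition $\alpha p>\tfrac12$ that upgrades $O$ to $o$ is exactly the mechanism the paper exploits (the paper packages your nonlinear estimate as Proposition~\ref{Prop-Nonlinear-Refined} and simply refers back to the computations \eqref{Est-4}--\eqref{Est-6}).

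One small inaccuracy: you quote $\|v(t,\cdot)-v^{(1,p)}(t,\cdot)\|_{\dot{H}^s}\lesssim t^{-\frac{2s+n}{8}}$ for the linear remainder, but that rate is the content of Theorem~\ref{Thm-Linear-Leading-Term}, which requires $v_1\in L^{1,1}$, whereas here only $u_1\in L^1$ is assumed. What is actually available under $L^1$ data is the weaker statement from \eqref{Est-new} (and its $\dot{H}^s$ analogue in the proof of Theorem~\ref{Thm-Linear-Opt-Est}), which mixes a polynomial gain with a bare $o(\ml{D}_n(t))$ term coming from $\|v_1\|_{L^1(|x|\geqslant t^{1/16})}\to0$. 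This still yields $o(\ml{B}_{n,s}(t))$, so your conclusion stands, but the claimed explicit $t^{-1/4}$ gain is not justified without the weighted assumption.
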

\begin{remark}
Combining Theorem \ref{Thm-GESDS} and Theorem \ref{Thm-Global-Profiles}, provided that $(u_0,u_1)\in\ml{A}_s$ and $|P_{u_1}|\neq0$, when $n\geqslant 2$, the global (in time) Sobolev solution fulfills the optimal estimates \eqref{Optimal-Est-Nonlinear} for large time $t\gg1$. 
It is worth mentioning that the global (in time) solution grows logarithmically $\sqrt{\ln t}$ in the $L^2$ norm when $n=2$. Comparing with the logarithmic growth rate for the two-dimensional free wave equation in \cite{Ikehata=2023}, i.e. $\|u^{\mathrm{wave}}(t,\cdot)\|_{L^2}\simeq \sqrt{\ln t}$ when $n=2$, their growth rates  are exactly the same. This new phenomenon can be explained by the dispersion (wave effect) of the nonlinear dissipative Boussinesq equation \eqref{Eq-NonLinear-Dissipative-Boussinesq} playing the decisive role when $n=2$. Moreover, for higher dimensions $n\geqslant 3$, the dissipation (damping effect) exerts the crucial influence.
\end{remark}
\begin{remark}
One of our novelties is the discovery of asymptotic profile $u^{(1,p)}=u^{(1,p)}(t,x)$, which contains the diffusion waves function $\ml{G}_1(t,x)$ rewritten by
\begin{align*}
\ml{G}_1(t,x)=\ml{F}^{-1}_{\xi\to x}\left(\frac{1}{2i|\xi|}\left(\mathrm{e}^{i|\xi|t+\frac{i}{2}|\xi|^3t}-\mathrm{e}^{-i|\xi|t-\frac{i}{2}|\xi|^3t}\right)\mathrm{e}^{-\mu|\xi|^4t}\right).
\end{align*}
This result implies some influence of dispersion and dissipation on asymptotic behavior of solutions to the nonlinear dissipative Boussinesq equation \eqref{Eq-NonLinear-Dissipative-Boussinesq}.
\end{remark}

\section{Sharp large time asymptotic behavior for the linearized model}\label{Section-Linearized-Model}\setcounter{equation}{0}
$\ \ \ \ $ This section contributes to investigate some qualitative properties of solution, including optimal growth/decay estimates as well as optimal leading terms in the $L^2$ framework, to the linearized dissipative Boussinesq equation with vanishing right-hand side as follows:
\begin{align}\label{Eq-Linear-Dissipative-Boussinesq}
\begin{cases}
v_{tt}-\Delta v+\Delta^2v+2\mu\Delta^2v_t=0,&x\in\mb{R}^n,\ t>0,\\
v(0,x)=v_0(x),\ v_t(0,x)=v_1(x),&x\in\mb{R}^n,
\end{cases}
\end{align}
with $\mu\in(0,1)$. These results are the preliminaries for studying global (in time) Sobolev solution to the nonlinear dissipative Boussinesq equation \eqref{Eq-NonLinear-Dissipative-Boussinesq} in the forthcoming sections. Moreover, we discover the critical dimension $n=2$ for distinguishing decisive role between the dispersion (wave effect) and dissipation (damping effect) on large time behavior of solution. We refer to Table \ref{tab:table1}.
\subsection{Refined estimates in the Fourier space}\label{Sub-Section-Pretreatment-Linear}
$\ \ \ \ $ To begin with our analysis, let us apply the partial Fourier transform with respect to spatial variables to the linearized Cauchy problem \eqref{Eq-Linear-Dissipative-Boussinesq}. Then, it implies
\begin{align*}
\begin{cases}
\widehat{v}_{tt}+2\mu|\xi|^4\widehat{v}_t+(|\xi|^2+|\xi|^4)\widehat{v}=0,&\xi\in\mb{R}^n,\ t>0,\\
\widehat{v}(0,\xi)=\widehat{v}_0(\xi),\ \widehat{v}_t(0,\xi)=\widehat{v}_1(\xi),&\xi\in\mb{R}^n.
\end{cases}
\end{align*}
The corresponding characteristic equation $\lambda^2+2\mu|\xi|^4\lambda+|\xi|^2+|\xi|^4=0$ has the roots
\begin{align*}
\lambda_{\pm}=-\mu|\xi|^4\pm\sqrt{\mu^2|\xi|^8-|\xi|^2-|\xi|^4}.
\end{align*}
We notice the dissipative-dispersive structure from the roots leading to non-trivial growth/decay properties of solution.
Obviously, the real parts of $\lambda_{\pm}$ are negative for all $\xi\in\mb{R}^n$. Before applying WKB analysis, we take the next zones in the Fourier space:
\begin{align*}
	\ml{Z}_{\intt}(\varepsilon_0)&:=\{\xi\in\mb{R}^n:\ |\xi|\leqslant\varepsilon_0\ll1\},\\
	\ml{Z}_{\bdd}(\varepsilon_0,N_0)&:=\{\xi\in\mb{R}^n:\ \varepsilon_0\leqslant |\xi|\leqslant N_0\},\\  
	\ml{Z}_{\extt}(N_0)&:=\{\xi\in\mb{R}^n:\ |\xi|\geqslant N_0\gg1\}.
\end{align*}
The cut-off functions $\chi_{\intt}(\xi),\chi_{\bdd}(\xi),\chi_{\extt}(\xi)\in \mathcal{C}^{\infty}$ having supports in their corresponding zones $\ml{Z}_{\intt}(\varepsilon_0)$, $\ml{Z}_{\bdd}(\varepsilon_0/2,2N_0)$ and $\ml{Z}_{\extt}(N_0)$, respectively, fulfilling $\chi_{\bdd}(\xi)=1-\chi_{\intt}(\xi)-\chi_{\extt}(\xi)$. Let us carry out asymptotic expansions with suitable order.  When $\xi\in\ml{Z}_{\intt}(\varepsilon_0)$ with $\varepsilon_0\ll 1$, the characteristic roots can be expanded by
\begin{align}\label{Expan-small-01}
\lambda_{\pm}=-\mu|\xi|^4\pm i\left(|\xi|+\frac{1}{2}|\xi|^3-\frac{1}{8}|\xi|^5+O(|\xi|^7)\right)=:\lambda_{\mathrm{R}}\pm i\lambda_{\mathrm{I}}\ \ \mbox{when}\ \ |\xi|\leqslant\varepsilon_0,
\end{align}
in which the higher order expansion in the imaginary part contributes to the construction of second profiles of solution later.  For another, when $\xi\in\ml{Z}_{\extt}(N_0)$ with $N_0\gg1$,  one gets
\begin{align}\label{Expan-large-01}
\lambda_{\pm}=\begin{cases}
\displaystyle{-\frac{1}{2\mu}+O(|\xi|^{-2})}\\
\displaystyle{-2\mu|\xi|^4+O(1)}
\end{cases}\ \ \mbox{when}\ \ |\xi|\geqslant N_0.
\end{align}

Note that $\mathrm{Re}\,\lambda_{\pm}<0$ when $\xi\in\ml{Z}_{\bdd}(\varepsilon_0,N_0)$, which leads to an exponential decay of solution in this zone.
Concerning $\xi\in\ml{Z}_{\intt}(\varepsilon_0)\cup\ml{Z}_{\extt}(N_0)$, thanks to the pairwise distinct characteristic roots discussed in the above, it allows us to represent the solution $\widehat{v}=\widehat{v}(t,\xi)$ by
\begin{align*}
\widehat{v}=\widehat{K}_0\widehat{v}_0+\widehat{K}_1\widehat{v}_1:=\frac{\lambda_+\mathrm{e}^{\lambda_-t}-\lambda_-\mathrm{e}^{\lambda_+t}}{\lambda_+-\lambda_-}\widehat{v}_0+\frac{\mathrm{e}^{\lambda_+t}-\mathrm{e}^{\lambda_-t}}{\lambda_+-\lambda_-}\widehat{v}_1.
\end{align*}
Especially, some cancellations between two dispersive parts show
\begin{align}\label{Rep-Lin-01}
\widehat{v}=\left(\cos(\lambda_{\mathrm{I}}t)-\frac{\lambda_{\mathrm{R}}}{\lambda_{\mathrm{I}}}\sin(\lambda_{\mathrm{I}}t)\right)\mathrm{e}^{\lambda_{\mathrm{R}}t}\widehat{v}_0+\frac{\sin(\lambda_{\mathrm{I}}t)}{\lambda_{\mathrm{I}}}\mathrm{e}^{\lambda_{\mathrm{R}}t}\widehat{v}_1\ \ \mbox{when}\ \ \xi\in\ml{Z}_{\intt}(\varepsilon_0).
\end{align}
We later will discuss some combined influence from the dissipative part $\mathrm{e}^{\lambda_{\mathrm{R}}t}$, the oscillating part $\sin(\lambda_{\mathrm{I}}t)$ and the singular part $\lambda_{\mathrm{I}}^{-1}$ (for small frequencies).

Strongly motivated by the behavior for the characteristic roots $\lambda_{\pm}$, the large time asymptotic behavior of solution is determined by the small frequency part. We consequently state some error estimates via subtracting suitable approximation functions when $\xi\in\ml{Z}_{\intt}(\varepsilon_0)$.
\begin{prop}\label{Prop-Refined-Error-01}The kernels of $\widehat{v}$ for small frequencies fulfill the following error estimates:
\begin{align}
	\chi_{\intt}(\xi)\left|\widehat{K}_1-\widehat{\ml{G}}_1\right|&\lesssim \chi_{\intt}(\xi)\mathrm{e}^{-c|\xi|^4t},\label{Est-K1-01}\\
	\chi_{\intt}(\xi)\left|\widehat{K}_1-\widehat{\ml{G}}_{1}+\frac{t}{8}|\xi|^4\widehat{\ml{G}}_0\right|+\chi_{\intt}(\xi)\left|\widehat{K}_0-\widehat{\ml{G}}_0\right|&\lesssim \chi_{\intt}(\xi)|\xi|\mathrm{e}^{-c|\xi|^4t},\label{Est-K1-02-K0}
\end{align}
where the functions $\widehat{\ml{G}}_j=\widehat{\ml{G}}_j(t,|\xi|)$ are defined in \eqref{G0-G1-fun}.
\end{prop}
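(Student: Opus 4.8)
The plan is to exploit the fact that, on the support of $\chi_{\intt}$, the dissipative part of the characteristic root is exactly $\lambda_{\mathrm{R}}=-\mu|\xi|^4$, which coincides \emph{identically} with the exponential weight $\mathrm{e}^{-\mu|\xi|^4 t}$ appearing in $\widehat{\ml{G}}_0$ and $\widehat{\ml{G}}_1$ via \eqref{G0-G1-fun}. Hence, after inserting the representation \eqref{Rep-Lin-01} and the defining formulas \eqref{G0-G1-fun}, the common factor $\mathrm{e}^{\lambda_{\mathrm{R}}t}=\mathrm{e}^{-\mu|\xi|^4 t}$ can be pulled out of every difference, so all errors reduce to differences of oscillating and amplitude factors only. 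Writing $\phi:=|\xi|+\frac{1}{2}|\xi|^3$ for the truncated phase and $\psi:=\lambda_{\mathrm{I}}-\phi=-\frac{1}{8}|\xi|^5+O(|\xi|^7)$ for the phase defect read off from \eqref{Expan-small-01}, the whole proof rests on one elementary absorption principle: for any $k\geqslant 0$ and any $0<c<\mu$ one has $(|\xi|^4 t)^k\mathrm{e}^{-\mu|\xi|^4 t}\lesssim\mathrm{e}^{-c|\xi|^4 t}$, which converts every polynomially-in-$t$ growing prefactor into the clean exponential on the right-hand sides of \eqref{Est-K1-01}--\eqref{Est-K1-02-K0}.

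For \eqref{Est-K1-01}, after factoring the exponential I would bound $|\sin(\lambda_{\mathrm{I}}t)/\lambda_{\mathrm{I}}-\sin(\phi t)/|\xi||$ by splitting it as $(g(\lambda_{\mathrm{I}})-g(\phi))+\sin(\phi t)(1/\phi-1/|\xi|)$ with $g(\omega):=\sin(\omega t)/\omega$. The mean value theorem gives $g(\lambda_{\mathrm{I}})-g(\phi)=g'(\theta)\psi$ with $|g'(\theta)|\lesssim t/|\xi|+|\xi|^{-2}$ and $\theta\simeq|\xi|$, so this piece is $\lesssim t|\xi|^4+|\xi|^3$; the second piece equals $-\frac{1}{2}|\xi|+O(|\xi|^3)$ times $\sin(\phi t)$, hence is $\lesssim|\xi|\lesssim 1$. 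Multiplying back by $\mathrm{e}^{-\mu|\xi|^4 t}$ and applying the absorption principle yields \eqref{Est-K1-01}.

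The estimate for $\widehat{K}_0$ in \eqref{Est-K1-02-K0} is of the same flavour, but I must now track one extra power of $|\xi|$. The difference $\cos(\lambda_{\mathrm{I}}t)-\cos(\phi t)$ is $O(t|\xi|^5)=|\xi|\cdot O(t|\xi|^4)$ by the mean value theorem, while the genuinely new term $\frac{\lambda_{\mathrm{R}}}{\lambda_{\mathrm{I}}}\sin(\lambda_{\mathrm{I}}t)$ is $O(|\xi|^3)$ because $\lambda_{\mathrm{R}}/\lambda_{\mathrm{I}}\simeq-\mu|\xi|^3$; both are controlled by $|\xi|\,\mathrm{e}^{-c|\xi|^4 t}$ after absorption.

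The heart of the matter --- and the step I expect to be most delicate --- is the refined $\widehat{K}_1$ estimate, where the correction $\frac{t}{8}|\xi|^4\widehat{\ml{G}}_0$ is precisely engineered to cancel the first-order effect of the phase defect $\psi$. Expanding $g(\lambda_{\mathrm{I}})=g(\phi)+g'(\phi)\psi+\frac{1}{2}g''(\tilde\theta)\psi^2$, the linear term unfolds as $g'(\phi)\psi=-\frac{t}{8}|\xi|^4\cos(\phi t)+O(t|\xi|^6)+O(|\xi|^3)$, using $|\xi|^5/\phi=|\xi|^4+O(|\xi|^6)$; this leading piece exactly annihilates $+\frac{t}{8}|\xi|^4\cos(\phi t)=\frac{t}{8}|\xi|^4\widehat{\ml{G}}_0\,\mathrm{e}^{\mu|\xi|^4 t}$. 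What remains is the quadratic Taylor remainder together with the amplitude mismatch $\sin(\phi t)(1/\phi-1/|\xi|)$. The remainder is the dangerous term: since $|g''(\tilde\theta)|\lesssim t^2/|\xi|+t/|\xi|^2+|\xi|^{-3}$ and $\psi^2=O(|\xi|^{10})$, it is bounded by $t^2|\xi|^9+t|\xi|^8+|\xi|^7=|\xi|\big((|\xi|^4 t)^2+(|\xi|^4 t)|\xi|^3+|\xi|^6\big)$, and the absorption principle applied to the square bracket $(|\xi|^4 t)^2$ reduces everything to $|\xi|\,\mathrm{e}^{-c|\xi|^4 t}$. Careful bookkeeping of these $|\xi|$-orders, rather than any single hard inequality, is the real obstacle; once the leading cancellation is verified and the $s\,\mathrm{e}^{-\mu s}$-type bounds are invoked, \eqref{Est-K1-02-K0} follows.
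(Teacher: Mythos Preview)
Your proof is correct and follows essentially the same strategy as the paper: factor out the exact dissipative exponential $\mathrm{e}^{\lambda_{\mathrm{R}}t}=\mathrm{e}^{-\mu|\xi|^4 t}$, control the remaining oscillatory differences by a first- (respectively second-) order Taylor expansion in the phase defect $\psi=\lambda_{\mathrm{I}}-\phi$, and absorb every factor $(|\xi|^4 t)^k$ into the exponential. The only cosmetic difference is that the paper splits $\widehat{K}_1-\widehat{\ml{G}}_1$ by first replacing the amplitude $1/\lambda_{\mathrm{I}}\to 1/|\xi|$ and then Taylor-expanding $\sin(\lambda_{\mathrm{I}}t)$ directly (rather than your function $g(\omega)=\sin(\omega t)/\omega$), which spares the computation of $g''$ but yields the same error orders $O(|\xi|^7)t+O(|\xi|^{10})t^2$ that you obtain.
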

\begin{proof} Throughout this proof, we consider $\xi\in\ml{Z}_{\intt}(\varepsilon_0)$.
Let us start with the kernel $\widehat{K}_1$ by direct subtractions as follows:
\begin{align*}
\widehat{K}_1-\widehat{\ml{G}}_1&=\left(\widehat{K}_1-\frac{\sin(\lambda_{\mathrm{I}}t)}{|\xi|}\mathrm{e}^{\lambda_{\mathrm{R}}t}\right)+\left(\frac{\sin(\lambda_{\mathrm{I}}t)}{|\xi|}\mathrm{e}^{\lambda_{\mathrm{R}}t}-\widehat{\ml{G}}_1\right)\\
&=\frac{-|\xi|+O(|\xi|^3)}{2+O(|\xi|^2)}\sin(\lambda_{\mathrm{I}}t)\mathrm{e}^{-\mu|\xi|^4t}+\left(-\frac{1}{8}|\xi|^4+O(|\xi|^6)\right)t\cos(\eta_0t)\mathrm{e}^{-\mu|\xi|^4t},
\end{align*}
where we used the mean value theorem 
with $\eta_0\in(\lambda_{\mathrm{I}},|\xi|+\frac{1}{2}|\xi|^3)$. It leads to \eqref{Est-K1-01} immediately. To derive a further refined error estimate, we apply the second order expansion
\begin{align*}
\sin(\lambda_{\mathrm{I}}t)-\sin\left(|\xi|t+\frac{1}{2}|\xi|^3t\right)+\frac{t}{8}|\xi|^5\cos\left(|\xi|t+\frac{1}{2}|\xi|^3t\right)=O(|\xi|^7)t+O(|\xi|^{10})t^2
\end{align*}
implying
\begin{align*}
	\chi_{\intt}(\xi)\left|\widehat{K}_1-\widehat{\ml{G}}_{1}+\frac{t}{8}|\xi|^4\widehat{\ml{G}}_0\right|\lesssim\chi_{\intt}(\xi)|\xi|\mathrm{e}^{-c|\xi|^4t}+\chi_{\intt}(\xi)\left(O(|\xi|^6)t+O(|\xi|^9)t^2\right)\mathrm{e}^{-c|\xi|^4t}
\end{align*}
to arrive at the first part of our estimate \eqref{Est-K1-02-K0}. For another kernel $\widehat{K}_0$, we just repeat the same procedure as the above, namely,
\begin{align*}
\widehat{K}_0-\widehat{\ml{G}}_0&=\left[\cos(\lambda_{\mathrm{I}}t)-\cos\left(|\xi|t+\frac{1}{2}|\xi|^3t\right)\right]\mathrm{e}^{\lambda_{\mathrm{R}}t}-\frac{\lambda_{\mathrm{R}}}{\lambda_{\mathrm{I}}}\sin(\lambda_{\mathrm{I}}t)\mathrm{e}^{\lambda_{\mathrm{R}}t}\\
&=\left(\frac{1}{8}|\xi|^5+O(|\xi|^7)\right)t\sin(\eta_1t)\mathrm{e}^{-\mu|\xi|^4t}+\left(\mu|\xi|^3+O(|\xi|^5)\right)\sin(\lambda_{\mathrm{I}}t)\mathrm{e}^{-\mu|\xi|^4t}
\end{align*}
with $\eta_1\in(\lambda_{\mathrm{I}},|\xi|+\frac{1}{2}|\xi|^3)$, to derive the second part of our estimate \eqref{Est-K1-02-K0}.
\end{proof}

\subsection{Optimal growth/decay estimates for the linearized model}
$\ \ \ \ $Before deriving optimal estimates of solution, let us introduce the next useful lemma for estimating the Fourier multipliers, which is partly given by \cite[Lemmas 4.1 and 4.2]{Ikehata-Iyota=2018}.
\begin{lemma}\label{Lem-Optimal-Est}
The Fourier multipliers fulfill the following optimal estimates:
	\begin{align}
	\|\ml{G}_0(t,\cdot)\|_{\dot{H}^{s}}+\|\ml{G}_1(t,\cdot)\|_{\dot{H}^{s+1}}&\simeq t^{-\frac{2s+n}{8}},\label{Est-Optimal-01}\\
	\|\ml{G}_1(t,\cdot)\|_{L^2}&\simeq \ml{D}_n(t),\label{Est-Optimal-02}
	\end{align}
with $s\geqslant 0$ for large time $t\gg1$, where the time-dependent function $\ml{D}_n(t)$ is defined in \eqref{Dnt}. Note that the last two estimates still hold for the localized functions $\chi_{\intt}(D)\ml{G}_j(t,x)$ with $j=0,1$.
\end{lemma}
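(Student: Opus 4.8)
The plan is to pass to the Fourier side via Plancherel's theorem and reduce everything to one-dimensional radial integrals, treating \eqref{Est-Optimal-01} and \eqref{Est-Optimal-02} by quite different mechanisms. For \eqref{Est-Optimal-01} the key point is an exact trigonometric cancellation. Since $\widehat{\ml{G}}_1=|\xi|^{-1}\sin(|\xi|t+\tfrac12|\xi|^3t)\mathrm{e}^{-\mu|\xi|^4t}$, the weight $|\xi|^{2(s+1)}$ of $\dot H^{s+1}$ exactly absorbs the singular factor $|\xi|^{-2}$, so that
\begin{align*}
\|\ml{G}_0(t,\cdot)\|_{\dot{H}^s}^2+\|\ml{G}_1(t,\cdot)\|_{\dot{H}^{s+1}}^2=\int_{\mb{R}^n}|\xi|^{2s}\left(\cos^2+\sin^2\right)\mathrm{e}^{-2\mu|\xi|^4t}\,\mathrm{d}\xi=\int_{\mb{R}^n}|\xi|^{2s}\mathrm{e}^{-2\mu|\xi|^4t}\,\mathrm{d}\xi,
\end{align*}
all oscillation disappearing. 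Passing to polar coordinates and substituting $\rho=(2\mu t)^{1/4}|\xi|$ turns this into $c_n\,t^{-(2s+n)/4}\int_0^\infty\rho^{2s+n-1}\mathrm{e}^{-\rho^4}\,\mathrm{d}\rho$, a convergent Gamma-type integral for every $s\ge0$, $n\ge1$; the elementary comparison $\sqrt{a^2+b^2}\simeq a+b$ then upgrades this to the sum of norms. This argument is in fact valid for all $t>0$ and needs no oscillatory analysis.

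The estimate \eqref{Est-Optimal-02} is the substantive one. In polar coordinates it reads
\begin{align*}
\|\ml{G}_1(t,\cdot)\|_{L^2}^2=\omega_{n-1}\int_0^\infty r^{\,n-3}\sin^2\!\left(rt+\tfrac12r^3t\right)\mathrm{e}^{-2\mu r^4t}\,\mathrm{d}r,
\end{align*}
and the whole phenomenon is driven by the fact that $r^{\,n-3}$ is integrable near the origin only for $n\ge3$, which is exactly what produces the three regimes of $\ml{D}_n(t)$. For the upper bound I would split $[0,\infty)$ at the oscillation scale $r\sim t^{-1}$ and the Gaussian scale $r\sim t^{-1/4}$: on $[0,t^{-1}]$ one uses $\sin^2(rt+\tfrac12r^3t)\lesssim(rt)^2$ to tame the singular weight, on the bulk $[t^{-1},t^{-1/4}]$ one bounds $\sin^2\le1$ and the Gaussian by $1$, and on the tail $[t^{-1/4},\infty)$ the factor $\mathrm{e}^{-2\mu r^4t}$ gives (after the substitution $w=r^4t$) only a constant contribution; summing the three reproduces $t$, $\ln t$ and $t^{-(n-2)/4}$ for $n=1,2,\ge3$ respectively. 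For the lower bound I would avoid delicate cancellation estimates by exhibiting a favorable set on which $\sin^2$ is bounded below and the weight is large: for $n\ge3$ the interval $[c_1t^{-1/4},c_2t^{-1/4}]$, where the Gaussian is $\simeq1$ and the half-period averaging of $\sin^2$ retains a constant fraction; for $n=2$ the interval $[Ct^{-1},t^{-1/4}]$, which yields the logarithm $\ln(t^{3/4})\simeq\ln t$; and for $n=1$ a single arch $[\tfrac{\pi}{4}t^{-1},\tfrac{3\pi}{4}t^{-1}]$ on which $r^{-2}\simeq t^2$. Throughout one must check that the dispersive correction $\tfrac12r^3t$ is negligible at the relevant scale (it is $O(t^{-2})$ on the $n=1$ arch and does not affect the half-period counting elsewhere), so the phase behaves like the pure diffusion-wave phase $rt$ treated in \cite{Ikehata-Iyota=2018}.

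I expect the main obstacle to be precisely the lower bound for \eqref{Est-Optimal-02} in the critical and subcritical dimensions $n=1,2$: there the singular weight $r^{\,n-3}$, the oscillation $\sin^2$ and the Gaussian cutoff live at three different scales, so one must correctly identify which scale dominates and verify that the cubic term in the phase does not spoil the averaging. Once the estimates for $\ml{G}_0$ and $\ml{G}_1$ are in hand, the localized statement for $\chi_{\intt}(D)\ml{G}_j$ follows for free: the difference involves $1-\chi_{\intt}(\xi)$, supported in $|\xi|\ge\varepsilon_0$, where $\mathrm{e}^{-2\mu|\xi|^4t}\le\mathrm{e}^{-2\mu\varepsilon_0^4t}$ is exponentially small for $t\gg1$ and hence dominated by every term on the right-hand sides of \eqref{Est-Optimal-01}--\eqref{Est-Optimal-02}.
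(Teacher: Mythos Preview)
Your proposal is correct. For \eqref{Est-Optimal-01} your Pythagorean trick---adding the squared norms so that $\cos^2+\sin^2=1$ erases the oscillation outright---is genuinely different from, and slicker than, the paper's argument: the paper treats $\|\ml{G}_0\|_{\dot H^s}$ by itself, writes $2\cos^2\theta=1+\cos 2\theta$, rescales $\eta=rt^{1/4}$, and invokes the Riemann--Lebesgue lemma to show the oscillatory piece is $o(1)$ for $t\gg1$ (and then says the $\ml{G}_1$ part is analogous). Your route gives the rate with exact constants and for all $t>0$, but it only controls the \emph{sum} of the two norms; the paper's approach yields the stronger fact that each of $\|\ml{G}_0\|_{\dot H^s}$ and $\|\ml{G}_1\|_{\dot H^{s+1}}$ is individually $\simeq t^{-(2s+n)/8}$, and that individual lower bound on $\ml{G}_1$ is what is actually invoked later (e.g.\ in \eqref{Est-A1}). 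For \eqref{Est-Optimal-02} the paper gives no self-contained proof at all: it simply cites \cite[Lemmas~4.1 and~4.2]{Ikehata-Iyota=2018} and remarks that the cubic perturbation $\tfrac12|\xi|^3t$ in the phase does not spoil their argument. Your three-scale splitting for the upper bound and explicit favorable intervals for the lower bound amount to a self-contained version of what that reference does.
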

\begin{proof}Recalling \cite[Lemmas 4.1 and 4.2]{Ikehata-Iyota=2018}, the authors have derived \eqref{Est-Optimal-01} when $s=0$ and \eqref{Est-Optimal-02}, in which the oscillating functions are $\cos(|\xi|t)$ and $\sin(|\xi|t)$. Although the oscillating functions have been changed in $\ml{G}_j(t,x)$ with $j=0,1$, their results are still valid (see the next statement). For this reason, we just need to prove \eqref{Est-Optimal-01} for $s>0$.
First of all, let us apply the Plancherel formula and the polar coordinates to get the desired upper bound estimate
\begin{align*}
\|\ml{G}_0(t,\cdot)\|_{\dot{H}^{s}}^2&=\int_{\mb{R}^n}\left|\cos\left(|\xi|t+\frac{1}{2}|\xi|^3t\right)\right|^2|\xi|^{2s}\mathrm{e}^{-2\mu|\xi|^4t}\mathrm{d}\xi\lesssim\int_0^{\infty}r^{2s+n-1}\mathrm{e}^{-2\mu r^4t}\mathrm{d}r\lesssim t^{-\frac{2s+n}{4}}.
\end{align*}
Again, from the polar coordinates and $2\cos^2y=1+\cos2y$, we may decompose it into two integrals
\begin{align*}
\|\ml{G}_0(t,\cdot)\|_{\dot{H}^{s}}^2&=\int_{|\omega|=1}\int_0^{\infty}\left|\cos\left(rt+\frac{1}{2}r^3t\right)\right|^2r^{2s+n-1}\mathrm{e}^{-2\mu r^4t}\mathrm{d}r\mathrm{d}\sigma_{\omega}\\
&=C_nt^{-\frac{2s+n}{4}}\left(\int_0^{\infty}\eta^{2s+n-1}\mathrm{e}^{-2\mu\eta^4}\mathrm{d}\eta+\int_0^{\infty}\cos\left(2\eta t^{\frac{3}{4}}+\eta^3t^{\frac{1}{4}}\right)\eta^{2s+n-1}\mathrm{e}^{-2\mu\eta^4}\mathrm{d}\eta\right)
\end{align*}
with a positive constant $C_n$ depending on the dimension $n$. Due to the fact that $\eta^{2s+n-1}\mathrm{e}^{-2\mu\eta^4}\in L^1$ for $2s+n-1\geqslant 0$, according to the Riemann-Lebesgue theorem for large time $t\gg1$, we may get
\begin{align*}
	\left|\int_0^{\infty}\cos\left(2\eta t^{\frac{3}{4}}+\eta^3t^{\frac{1}{4}}\right)\eta^{2s+n-1}\mathrm{e}^{-2\mu\eta^4}\mathrm{d}\eta\right|\leqslant \frac{1}{2}\int_0^{\infty}\eta^{2s+n-1}\mathrm{e}^{-2\mu\eta^4}\mathrm{d}\eta
\end{align*}
so that
 $\|\ml{G}_0(t,\cdot)\|_{\dot{H}^s}^2\gtrsim t^{-\frac{2s+n}{4}}$. Summarizing the previous computation, we finish the first part of optimal estimates \eqref{Est-Optimal-01}. For the second part of it, we may complete the estimate by following the same method as the above.
%
\end{proof}

\begin{theorem}\label{Thm-Linear-Opt-Est}
Suppose that initial data $v_0\in H^{s+1}\cap L^1$ and $v_1\in H^{(s-3)_+}\cap L^1$ with $s\geqslant 0$. Then, the solution to the linearized dissipative Boussinesq equation \eqref{Eq-Linear-Dissipative-Boussinesq} fulfills the following optimal estimates:
\begin{align}
\ml{D}_n(t)|P_{v_1}|\lesssim\|v(t,\cdot)\|_{L^2}\lesssim\ml{D}_n(t)\|(v_0,v_1)\|_{(L^2\cap L^1)^2}\label{Est-Lin-Solution}
\end{align}
and
\begin{align}
t^{-\frac{2s+n}{8}}|P_{v_1}|\lesssim\|v(t,\cdot)\|_{\dot{H}^{s+1}}\lesssim t^{-\frac{2s+n}{8}}\|(v_0,v_1)\|_{(H^{s+1}\cap L^1)\times(H^{(s-3)_+}\cap L^1)}\label{Est-Lin-Derivative}
\end{align}
for large time $t\gg1$, provided that $|P_{v_1}|\neq0$.
\end{theorem}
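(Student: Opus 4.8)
The plan is to combine the refined kernel comparisons of Proposition \ref{Prop-Refined-Error-01} with the sharp multiplier estimates of Lemma \ref{Lem-Optimal-Est}, organising everything by the three frequency zones $\ml{Z}_{\intt}(\varepsilon_0)$, $\ml{Z}_{\bdd}(\varepsilon_0,N_0)$ and $\ml{Z}_{\extt}(N_0)$. Throughout I would work on the Fourier side, writing $\widehat{v}=\widehat{K}_0\widehat{v}_0+\widehat{K}_1\widehat{v}_1$ and using the Plancherel identity $\|v(t,\cdot)\|_{\dot{H}^{s+1}}=\||\xi|^{s+1}\widehat{v}\|_{L^2}$ (the value $s+1=0$ recovering the $L^2$ case). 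The guiding principle, dictated by the behaviour of $\lambda_{\pm}$, is that the genuine large-time contribution comes only from $\chi_{\intt}(\xi)\widehat{\ml{G}}_1(t,\xi)P_{v_1}$, and every other piece should be shown to be $o$ of the claimed rate.

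For the upper bounds I would first dispose of the bounded and exterior zones. On $\ml{Z}_{\bdd}$ the bound $\mathrm{Re}\,\lambda_{\pm}<0$ gives a uniform factor $\mathrm{e}^{-ct}$, so that contribution is controlled by $\mathrm{e}^{-ct}\|(v_0,v_1)\|_{(L^2)^2}$. On $\ml{Z}_{\extt}$ the expansion \eqref{Expan-large-01} shows that $\widehat{K}_0$ is essentially $\mathrm{e}^{-t/(2\mu)}$ while $\widehat{K}_1\sim|\xi|^{-4}$ up to the same exponential; hence the weight $|\xi|^{s+1}$ costs $|\xi|^{s+1}$ on $v_0$ but only $|\xi|^{s-3}$ on $v_1$, which is precisely why the data are taken in $H^{s+1}$ and $H^{(s-3)_+}$, and the resulting piece is again exponentially small. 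In the decisive interior zone I would replace $\widehat{K}_j$ by $\widehat{\ml{G}}_j$ using \eqref{Est-K1-01} and the second part of \eqref{Est-K1-02-K0}, bounding the errors by $\|\chi_{\intt}\mathrm{e}^{-c|\xi|^4t}\|_{L^2}\lesssim t^{-n/8}$ together with $|\widehat{v}_j(\xi)|\leqslant\|v_j\|_{L^1}$, and then apply Lemma \ref{Lem-Optimal-Est}: the $\ml{G}_1$-term gives $\|\chi_{\intt}(D)\ml{G}_1\|_{\dot{H}^{s+1}}\lesssim t^{-\frac{2s+n}{8}}$ (resp. $\simeq\ml{D}_n(t)$ when $s+1=0$), while the $\ml{G}_0$-term is of the strictly faster order $t^{-\frac{2(s+1)+n}{8}}$. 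Summing the three zones yields the claimed upper estimates.

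For the lower bounds I would use the reverse triangle inequality around the main term. Writing $\widehat{v}_1=P_{v_1}+(\widehat{v}_1-P_{v_1})$ and isolating $\chi_{\intt}(\xi)\widehat{\ml{G}}_1 P_{v_1}$, Lemma \ref{Lem-Optimal-Est} gives $\|\chi_{\intt}(D)\ml{G}_1\,P_{v_1}\|_{\dot{H}^{s+1}}\simeq t^{-\frac{2s+n}{8}}|P_{v_1}|$ (resp. $\simeq\ml{D}_n(t)|P_{v_1}|$), so it remains to bound all other pieces by $o$ of this rate. The kernel-replacement error via \eqref{Est-K1-01}, the entire $\ml{G}_0$-contribution (of faster order $t^{-\frac{2(s+1)+n}{8}}$), and the bounded and exterior zones (exponentially small) are all handled exactly as in the upper-bound step.

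The remaining and genuinely delicate term is $\chi_{\intt}(\xi)\widehat{\ml{G}}_1(\widehat{v}_1(\xi)-\widehat{v}_1(0))$, and this is where I expect the main obstacle. Since only $v_1\in L^1$ is assumed, no moment condition is available and one cannot extract a clean factor $O(|\xi|)$; instead I would exploit that $\widehat{v}_1$ is continuous with $\widehat{v}_1(0)=P_{v_1}$. Fixing $\delta>0$, choose $\rho>0$ with $|\widehat{v}_1(\xi)-\widehat{v}_1(0)|<\delta$ for $|\xi|<\rho$; splitting the interior integral at $|\xi|=\rho$, the inner part is $\leqslant\delta\,\|\chi_{\intt}(D)\ml{G}_1\|_{\dot{H}^{s+1}}\lesssim\delta\,t^{-\frac{2s+n}{8}}$, while on $\rho\leqslant|\xi|\leqslant\varepsilon_0$ the factor $\mathrm{e}^{-\mu|\xi|^4t}\leqslant\mathrm{e}^{-\mu\rho^4t}$ makes the piece exponentially small. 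Letting $t\gg1$ and then $\delta\to0$ shows this term is $o(t^{-\frac{2s+n}{8}})$ (resp. $o(\ml{D}_n(t))$), which closes the lower bounds. This dominated-convergence type argument, rather than any single explicit computation, is the crux of the proof.
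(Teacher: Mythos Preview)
Your proposal is correct and, for the upper bounds and the easy remainder terms in the lower bound, follows the paper almost verbatim: zone decomposition, exponential decay on $\ml{Z}_{\bdd}\cup\ml{Z}_{\extt}$ (with the $|\xi|^{-4}$ gain from $\widehat{K}_1$ explaining the $H^{(s-3)_+}$ regularity on $v_1$), kernel replacement $\widehat{K}_j\to\widehat{\ml{G}}_j$ via Proposition~\ref{Prop-Refined-Error-01}, and the sharp rates from Lemma~\ref{Lem-Optimal-Est}.

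Where you genuinely diverge is in handling the crucial remainder $\chi_{\intt}(\xi)\widehat{\ml{G}}_1(\widehat{v}_1-P_{v_1})$. The paper goes back to physical space: it writes $\ml{G}_1(t,|D|)v_1-\ml{G}_1(t,\cdot)P_{v_1}=\int_{\mb{R}^n}(\ml{G}_1(t,\cdot-y)-\ml{G}_1(t,\cdot))v_1(y)\,\mathrm{d}y$, splits at $|y|=t^{1/16}$, uses the mean value theorem $|\ml{G}_1(t,x-y)-\ml{G}_1(t,x)|\lesssim|y|\,|\nabla\ml{G}_1|$ on the inner ball to gain $t^{1/16}\|\nabla\ml{G}_1\|_{L^2}\lesssim t^{\frac{1}{16}-\frac{n}{8}}$, and on the outer region uses that $\|v_1\|_{L^1(|x|\geqslant t^{1/16})}\to0$. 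You instead stay on the Fourier side and exploit the continuity of $\widehat{v}_1$ at the origin via an $\varepsilon$--$\rho$ argument. Both yield only the qualitative $o(\ml{D}_n(t))$ (resp.\ $o(t^{-\frac{2s+n}{8}})$), so for this theorem the two are equivalent in strength; your route is arguably cleaner here, while the paper's physical-space Taylor expansion is what generalises directly to the second-profile analysis in Theorem~\ref{Thm-Linear-Leading-Term}, where a second-order expansion at $y=0$ is needed.
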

\begin{proof}
Concerning the solution itself, according to the representation \eqref{Rep-Lin-01} supplemented by the asymptotic expansion \eqref{Expan-small-01}, or Proposition \ref{Prop-Refined-Error-01} together with the triangle inequality, we are able to derive
\begin{align*}
\|\chi_{\intt}(\xi)\widehat{v}(t,\xi)\|_{L^2}&\lesssim\sum\limits_{k=0,1} \left\|\chi_{\intt}(\xi)\left(|\xi|^{1-k}\mathrm{e}^{-c|\xi|^4t}+|\widehat{\ml{G}}_k(t,|\xi|)|\right)|\widehat{v}_k(\xi)|\right\|_{L^2}\\
&\lesssim t^{-\frac{n}{8}}\|v_0\|_{L^1}+\ml{D}_n(t)\|v_1\|_{L^1},
\end{align*}
where we used the Hausdorff-Young inequality and Lemma \ref{Lem-Optimal-Est} in the last line. Similarly, for the bounded and large frequency parts, from the asymptotic expansion \eqref{Expan-large-01}, it follows
\begin{align*}
\left\|\big(\chi_{\bdd}(\xi)+\chi_{\extt}(\xi)\big)\widehat{v}(t,\xi)\right\|_{L^2}\lesssim\mathrm{e}^{-ct}\left\||\widehat{v}_0(\xi)|+\langle\xi\rangle^{-4}|\widehat{v}_1(\xi)|\right\|_{L^2}\lesssim\mathrm{e}^{-ct}\|(v_0,v_1)\|_{(L^2)^2}.
\end{align*}
Summarizing the last two estimates and applying the Plancherel theorem from the Fourier space to the physical space, we claim the upper bound estimate in \eqref{Est-Lin-Solution} immediately. For the higher order derivatives, by the analogous way, we have
\begin{align*}
	\left\||\xi|^{s+1}\widehat{v}(t,\xi)\right\|_{L^2}&\lesssim\sum\limits_{k=0,1}\left\|\chi_{\intt}(\xi)|\xi|^{s+1-k}\mathrm{e}^{-c|\xi|^4t}|\widehat{v}_k(\xi)|\right\|_{L^2}+\mathrm{e}^{-ct}\left\|\langle\xi\rangle^{s+1}|\widehat{v}_0(\xi)|+\langle\xi\rangle^{s-3}|\widehat{v}_1(\xi)|\right\|_{L^2}\\
	&\lesssim t^{-\frac{2s+2+n}{8}}\|v_0\|_{H^{s+1}\cap L^1}+t^{-\frac{2s+n}{8}}\|v_1\|_{H^{(s-3)_+}\cap L^1},
\end{align*}
which completes the proof of upper bound estimates by the Plancherel theorem.

To guarantee the sharpness of the above estimates, we now turn to their lower bounds. With the aid of \eqref{Est-K1-01}, one gets
\begin{align}\label{Est-optimal-proc-01}
\|v(t,\cdot)-\ml{G}_1(t,|D|)v_1(\cdot)\|_{L^2}=\left\|\widehat{v}(t,\xi)-\widehat{\ml{G}}_1(t,|\xi|)\widehat{v}_1(\xi)\right\|_{L^2}\lesssim t^{-\frac{n}{8}}\|(v_0,v_1)\|_{(L^2\cap L^1)^2}.
\end{align}
For another, associated with the mean value theorem
\begin{align}\label{Taylor-first}
|\ml{G}_1(t,x-y)-\ml{G}_1(t,x)|\lesssim|y|\,|\nabla\ml{G}_1(t,x-\eta_2y)|
\end{align}
with $\eta_2\in(0,1)$, we may separate the integral into two parts such that
\begin{align}
&\|\ml{G}_1(t,|D|)v_1(\cdot)-\ml{G}_1(t,\cdot)P_{v_1}\|_{L^2}\notag\\
&\quad\lesssim\left\|\int_{|y|\leqslant t^{\frac{1}{16}}}\big(\ml{G}_1(t,\cdot-y)-\ml{G}_1(t,\cdot)\big)v_1(y)\mathrm{d}y\right\|_{L^2}+\left\|\int_{|y|\geqslant t^{\frac{1}{16}}}\big(|\ml{G}_1(t,\cdot-y)|+|\ml{G}_1(t,\cdot)|\big)|v_1(y)|\mathrm{d}y\right\|_{L^2}\notag\\
&\quad\lesssim t^{\frac{1}{16}}\|\,|\xi|\widehat{\ml{G}}_1(t,|\xi|)\|_{L^2}\|v_1\|_{L^1}+\|\widehat{\ml{G}}_1(t,|\xi|)\|_{L^2}\|v_1\|_{L^1(|x|\geqslant t^{\frac{1}{16}})}\notag\\
&\quad\lesssim t^{\frac{1}{16}-\frac{n}{8}}\|v_1\|_{L^1}+o(\ml{D}_n(t))\label{Est-optimal-proc-02}
\end{align}
for large time $t\gg1$, thanks to our assumption on the $L^1$ integrability of $v_1$. Hence, the triangle inequality for \eqref{Est-optimal-proc-01} as well as \eqref{Est-optimal-proc-02} shows
\begin{align}\label{Est-new}
\|v(t,\cdot)-\ml{G}_1(t,\cdot)P_{v_1}\|_{L^2}\lesssim t^{-\frac{n}{8}}\|(v_0,v_1)\|_{(L^2\cap L^1)^2}+t^{\frac{1}{16}-\frac{n}{8}}\|v_1\|_{L^1}+o(\ml{D}_n(t)).
\end{align}
Eventually, let us employ the Minkowski inequality and combine the last estimate with Lemma \ref{Lem-Optimal-Est} to arrive at
\begin{align*}
\|v(t,\cdot)\|_{L^2}&\gtrsim\|\ml{G}_1(t,\cdot)\|_{L^2}|P_{v_1}|-\|v(t,\cdot)-\ml{G}_1(t,\cdot)P_{v_1}\|_{L^2}\\
&\gtrsim\ml{D}_n(t)|P_{v_1}|-t^{-\frac{n}{8}}\|(v_0,v_1)\|_{(L^2\cap L^1)^2}-t^{\frac{1}{16}-\frac{n}{8}}\|v_1\|_{L^1}-o(\ml{D}_n(t))\\
&\gtrsim \ml{D}_n(t)|P_{v_1}|
\end{align*}
for large time $t\gg1$. The derivation of sharp lower bound estimate in \eqref{Est-Lin-Solution} is finished. By the same manner, we may derive the lower bound estimate in \eqref{Est-Lin-Derivative} without any additional difficulty. Thus, our proof is completed.
\end{proof}
\begin{remark}
	For the linearized Boussinesq equation with structural damping $(-\Delta)^{\theta}v_t$ carrying $\theta\in[0,1]$, the authors of \cite{Ikehata-Soga=2015} obtained optimal growth/decay estimates with weighted $L^1$ data. Here, we can relax such assumption to the $L^1$ integrability only because of \eqref{Est-optimal-proc-02}. 
\end{remark}
\begin{remark}
The previous paper \cite[Theorem 2]{Su-Wang=2020} derived optimal decay estimates for the Cauchy problem \eqref{Eq-Linear-Dissipative-Boussinesq} with initial data belonging to Riesz potential spaces of negative order and suitable weighted $L^1$ space, in which the operator $(-\Delta)^{-1}$ plays an indispensable role in the assumptions of initial data. Different from their work, we consider the usual Sobolev spaces with additional $L^1$ integrability for initial data, and derive optimal growth ($n=1,2$) and decay ($n\geqslant 3$) estimates of the solution itself in Theorem \ref{Thm-Linear-Opt-Est}. This growth property in lower dimensions is caused by dispersion (wave effect) in the dissipative Boussinesq equation (see Remark \ref{Rem-3.3} later).
\end{remark}
\begin{remark}
It is worth noting that the growth rates for the free wave equation (see \cite[Theorems 1 and 2]{Ikehata=2023}) and the linearized dissipative Boussinesq equation \eqref{Eq-Linear-Dissipative-Boussinesq} are exactly the same in  lower dimensions $n=1,2$, but the solution decays polynomially with the help of the dissipation $\Delta^2v_t$ in higher dimensions $n\geqslant 3$.  Let us summary them by Table \ref{tab:table1}.
\renewcommand\arraystretch{1.4}
\begin{table}[h!]
	\begin{center}
		\caption{Influence from dispersion (wave effect) and dissipation (damping effect)}
		\medskip
		\label{tab:table1}
		\begin{tabular}{cccc} 
			\toprule
			Dimension & $n=1$ & $n=2$ & $n\geqslant3$\\
			\midrule
			Dispersion property (wave effect) & $\sqrt{t}$ & $\sqrt{\ln t}$ & -- \\
			Dissipation property (damping effect) & $t^{-\frac{1}{8}}$& $t^{-\frac{2}{8}}$& $t^{-\frac{n}{8}}$\\  
			Dissipative Boussinesq equation property & $\sqrt{t}$ & $\sqrt{\ln t}$ & $t^{-\frac{n-2}{8}}=t^{\frac{1}{4}}\cdot t^{-\frac{n}{8}}$\\
			\hline 
			Crucial influence & Dispersion & Dispersion & Dispersion \& Dissipation  \\
			\bottomrule
			\multicolumn{4}{l}{\emph{$*$ The terminology ``property'' specializes the time-dependent coefficient in the $L^2$ estimate of solution.}}
		\end{tabular}
	\end{center}
\end{table}

\noindent In other words, the dispersion plays the decisive role when $n=1,2$, but the dissipation exerts the crucial influence when $n\geqslant 3$, which is one of our new discoveries in the linearized dissipative Boussinesq equation \eqref{Eq-Linear-Dissipative-Boussinesq}.
\end{remark}

Finally, the pointwise estimate with $s\geqslant0$ holds
\begin{align}\label{Est-Small-Point}
\chi_{\intt}(\xi)|\xi|^{s+2}|\widehat{v}|\lesssim\chi_{\intt}(\xi)\mathrm{e}^{-c|\xi|^4t}(|\xi|^{s+2}|\widehat{v}_0|+|\xi|^{s+1}|\widehat{v}_1|).
\end{align}
By applying Theorem \ref{Thm-Linear-Opt-Est} when $t\geqslant t_0\gg1$ and the bounded estimate for $\|v(t,\cdot)\|_{\dot{H}^{s+2}}$ with $s\geqslant0$ when $t\leqslant t_0$, we can easily obtain the next $(L^2\cap L^1)-L^2$ type estimates. The other $L^2-L^2$ type estimates can be derived by employing \eqref{Est-Small-Point}.
\begin{coro}\label{Coro-Linear-Est}
Suppose that initial data $v_0\in H^{s+2}\cap L^1$ and $v_1\in H^{(s-2)_+}\cap L^1$ with $s\geqslant 0$. Then, the solution fulfills the following  $(L^2\cap L^1)-L^2$ type estimates:
\begin{align*}
\|v(t,\cdot)\|_{\dot{H}^{s+2}}\lesssim (1+t)^{-\frac{2(s+1)+n}{8}}\|(v_0,v_1)\|_{(\dot{H}^{s+2}\cap L^1)\times(\dot{H}^{(s-2)_+}\cap L^1)},
\end{align*}
and the following $L^2- L^2$ type estimates:
\begin{align*}
	\|v(t,\cdot)\|_{\dot{H}^{s+2}}\lesssim \|v_0\|_{\dot{H}^{s+2}}+\begin{cases}
		(1+t)^{-\frac{s+1}{4}}\|v_1\|_{L^2}&\mbox{when}\ \ s\in[0,2],\\
		(1+t)^{-\frac{3}{4}}\|v_1\|_{\dot{H}^{s-2}}&\mbox{when}\ \ s\in(2,\infty).
	\end{cases}
\end{align*}
\end{coro}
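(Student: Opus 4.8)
The plan is to combine the sharp large-time asymptotics of Theorem~\ref{Thm-Linear-Opt-Est} with a bounded-time estimate for the $(L^2\cap L^1)$--$L^2$ bound, and to run a direct three-zone frequency decomposition based on the pointwise inequality \eqref{Est-Small-Point} for the $L^2$--$L^2$ bound. For the first estimate I would apply Theorem~\ref{Thm-Linear-Opt-Est} with its regularity parameter set to $s+1$, so that the left-hand norm there becomes $\dot{H}^{(s+1)+1}=\dot{H}^{s+2}$. This is licit precisely because the hypotheses of that theorem then read $v_0\in H^{s+2}\cap L^1$ and $v_1\in H^{(s-2)_+}\cap L^1$, which are exactly the data spaces assumed in the corollary, and the decay rate becomes $t^{-\frac{2(s+1)+n}{8}}$. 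Thus for $t\geqslant t_0\gg1$ we get the claimed bound directly. On the complementary range $t\leqslant t_0$ the weight $(1+t)^{-\frac{2(s+1)+n}{8}}$ is bounded below by a positive constant, so it suffices to bound $\|v(t,\cdot)\|_{\dot{H}^{s+2}}$ uniformly there; this follows from continuous dependence of the solution on the data over the compact interval $[0,t_0]$ (equivalently, a standard energy inequality). Matching the two regimes and writing $1+t$ for $t$ yields the $(L^2\cap L^1)$--$L^2$ estimate.

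For the $L^2$--$L^2$ estimate I would decompose $\widehat{v}$ according to $\ml{Z}_{\intt}(\varepsilon_0)$, $\ml{Z}_{\bdd}(\varepsilon_0,N_0)$ and $\ml{Z}_{\extt}(N_0)$. On the interior zone I apply \eqref{Est-Small-Point} and take the $L^2$ norm. The $v_0$ contribution is controlled by $\|\chi_{\intt}(\xi)|\xi|^{s+2}\widehat{v}_0\|_{L^2}\leqslant\|v_0\|_{\dot{H}^{s+2}}$ using $\mathrm{e}^{-c|\xi|^4t}\leqslant1$, which produces the first term on the right-hand side. For the $v_1$ contribution I would estimate the Fourier multiplier by its supremum. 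When $s\in[0,2]$ I bound $\sup_{\xi}|\xi|^{s+1}\mathrm{e}^{-c|\xi|^4t}\lesssim t^{-\frac{s+1}{4}}$ and pull out $\|v_1\|_{L^2}$; when $s\in(2,\infty)$ I instead factor $|\xi|^{s+1}=|\xi|^3\,|\xi|^{s-2}$, use $\sup_{\xi}|\xi|^{3}\mathrm{e}^{-c|\xi|^4t}\lesssim t^{-\frac34}$, and pull out $\|v_1\|_{\dot{H}^{s-2}}$. These two branches reproduce exactly the case distinction in the statement.

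On the bounded and exterior zones the characteristic roots have strictly negative real part, uniformly on $\ml{Z}_{\bdd}(\varepsilon_0,N_0)$ and, by \eqref{Expan-large-01}, on $\ml{Z}_{\extt}(N_0)$, where moreover the kernel $\widehat{K}_1$ carries a gain of $|\xi|^{-4}$. Hence these pieces decay like $\mathrm{e}^{-ct}$ and are dominated by the $v_0$ term $\|v_0\|_{\dot{H}^{s+2}}$ and by the polynomial $v_1$ terms above. The only bookkeeping point is that for $s\in[0,2]$ the natural $v_1$ bound on these zones is $\mathrm{e}^{-ct}\|v_1\|_{\dot{H}^{s-2}}$, which I would convert to $\mathrm{e}^{-ct}\|v_1\|_{L^2}$ using $|\xi|^{s-2}\lesssim1$ for $|\xi|\gtrsim\varepsilon_0$ and $s\leqslant2$. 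Summing the zone contributions and replacing $t$ by $1+t$ completes the $L^2$--$L^2$ estimate.

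I do not anticipate a genuinely hard step: once Theorem~\ref{Thm-Linear-Opt-Est} and \eqref{Est-Small-Point} are in hand, everything is a routine frequency decomposition, which is why the authors call it easy. The most error-prone parts—and the places I would double-check—are (i) correctly shifting the index to $s+1$ so that the data spaces line up with the corollary, and (ii) the case split $s\leqslant2$ versus $s>2$ together with the low-/high-frequency conversions between $\dot{H}^{s-2}$ and $L^2$.
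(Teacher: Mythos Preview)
Your proposal is correct and follows essentially the same approach as the paper: for the $(L^2\cap L^1)$--$L^2$ estimate you invoke Theorem~\ref{Thm-Linear-Opt-Est} with the index shifted to $s+1$ for large time and a bounded-time estimate otherwise, and for the $L^2$--$L^2$ estimate you run the three-zone decomposition driven by the pointwise bound \eqref{Est-Small-Point}, exactly as the paper indicates. Your write-up simply fills in the details the paper leaves implicit; the index shift and the $s\leqslant 2$ versus $s>2$ bookkeeping are handled correctly.
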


To end this part, we propose some estimates of solution in the some norms when $n=2$. They will contribute to the global (in time) existence result for the nonlinear problem \eqref{Eq-NonLinear-Dissipative-Boussinesq} in the special case $n=2$ with $p=4$.
\begin{coro}\label{Coro-Linear-L6}
	Let $n=2$. Suppose that initial data $v_0\in H^{1+\epsilon_1}\cap L^1$ and $v_1\in L^2\cap L^1$ with a sufficiently small constant $\epsilon_1>0$. Then, the solution fulfills the following estimates:
	\begin{align*}
		\|v(t,\cdot)\|_{L^{\infty}}&\lesssim (1+t)^{-\frac{1}{4}}\|(v_0,v_1)\|_{(H^{1+\epsilon_1}\cap L^1)\times (L^2\cap L^1)},\\
		\|v(t,\cdot)\|_{L^6}&\lesssim (1+t)^{-\frac{1}{6}}\|(v_0,v_1)\|_{(H^{\frac{2}{3}+\epsilon_1}\cap L^1)\times (L^2\cap L^1)}.
	\end{align*}
\end{coro}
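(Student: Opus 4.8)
The plan is to work entirely in the Fourier space and to combine the Hausdorff--Young inequality with the three-zone decomposition already fixed in Subsection \ref{Sub-Section-Pretreatment-Linear}. The starting point is the representation $\widehat{v}=\widehat{K}_0\widehat{v}_0+\widehat{K}_1\widehat{v}_1$ together with the small-frequency bounds $\chi_{\intt}(\xi)|\widehat{K}_0|\lesssim\chi_{\intt}(\xi)\mathrm{e}^{-c|\xi|^4t}$ and $\chi_{\intt}(\xi)|\widehat{K}_1|\lesssim\chi_{\intt}(\xi)|\xi|^{-1}\mathrm{e}^{-c|\xi|^4t}$, which follow from \eqref{Rep-Lin-01} and Proposition \ref{Prop-Refined-Error-01}, while on $\ml{Z}_{\bdd}\cup\ml{Z}_{\extt}$ the expansion \eqref{Expan-large-01} furnishes the exponential factor $\mathrm{e}^{-ct}$. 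For the $L^\infty$ bound I would invoke $\|v(t,\cdot)\|_{L^\infty}\le\|\widehat{v}(t,\cdot)\|_{L^1}$, and for the $L^6$ bound the Hausdorff--Young inequality $\|v(t,\cdot)\|_{L^6}\le\|\widehat{v}(t,\cdot)\|_{L^{6/5}}$, in each case splitting the norm over the three zones.

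For the small-frequency contribution with $n=2$, passing to polar coordinates and using $\|\widehat{v}_j\|_{L^\infty}\le\|v_j\|_{L^1}$, the decisive term is the $v_1$-kernel: its $L^1$ norm over $\ml{Z}_{\intt}$ behaves like $\int_0^{\varepsilon_0}\mathrm{e}^{-cr^4t}\,\mathrm{d}r\simeq t^{-1/4}$, the singular factor $r^{-1}$ being absorbed by the radial weight $r\,\mathrm{d}r$, whereas the $v_0$-kernel yields the faster rate $t^{-1/2}$. For $L^6$ the same kernel is measured in $L^{6/5}$: after the rescaling $\eta=rt^{1/4}$ one finds $\big(\int_0^{\varepsilon_0}r^{-6/5}\mathrm{e}^{-cr^4t}\,r\,\mathrm{d}r\big)^{5/6}\simeq(t^{-1/5})^{5/6}=t^{-1/6}$, again with the $v_0$-kernel contributing the faster $t^{-5/12}$. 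The bounded-frequency zone $\ml{Z}_{\bdd}$ is harmless thanks to the factor $\mathrm{e}^{-ct}$.

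The high-frequency zone $\ml{Z}_{\extt}$ is where the regularity loss $\epsilon_1$ is forced, and this pins down the two indices $1+\epsilon_1$ and $\tfrac23+\epsilon_1$. For $L^\infty$ one estimates $\int_{|\xi|\ge N_0}|\widehat{v}_0|\,\mathrm{d}\xi$ by Cauchy--Schwarz against the weight $\langle\xi\rangle^{-(1+\epsilon_1)}$, which is square-integrable on $\ml{Z}_{\extt}\subset\mb{R}^2$ precisely when $2(1+\epsilon_1)>2$, i.e. $\epsilon_1>0$, producing the norm $\|v_0\|_{H^{1+\epsilon_1}}$; the $v_1$-part carries the extra decay $\langle\xi\rangle^{-4}$ and is absorbed by $\|v_1\|_{L^2}$. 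For $L^6$ one instead splits the $L^{6/5}$ norm by H\"older with $\tfrac{1}{6/5}=\tfrac12+\tfrac13$, writing $\widehat{v}_0=\langle\xi\rangle^{2/3+\epsilon_1}\widehat{v}_0\cdot\langle\xi\rangle^{-(2/3+\epsilon_1)}$; here $\langle\xi\rangle^{-(2/3+\epsilon_1)}\in L^3(\ml{Z}_{\extt})$ in $\mb{R}^2$ exactly when $3(2/3+\epsilon_1)>2$, i.e. $\epsilon_1>0$, which is the origin of the index $\tfrac23+\epsilon_1$. Finally, for $t\le 1$ all claimed bounds reduce, via the Plancherel theorem and the Sobolev embeddings $H^{1+\epsilon_1}\hookrightarrow L^\infty$ and $H^{2/3+\epsilon_1}\hookrightarrow L^6$ valid in $\mb{R}^2$, to the uniform boundedness of the solution operator on the data space, so that the factors $(1+t)^{-1/4}$ and $(1+t)^{-1/6}$ emerge by gluing the two time regimes.

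The main obstacle I anticipate is securing the clean algebraic rate $t^{-1/6}$ for the $L^6$ norm \emph{without} a logarithmic loss. A direct Gagliardo--Nirenberg interpolation $\|v\|_{L^6}\lesssim\|v\|_{L^2}^{1/3}\|v\|_{\dot{H}^1}^{2/3}$ would import the logarithmic growth $\|v(t,\cdot)\|_{L^2}\simeq\sqrt{\ln t}$ from \eqref{Dnt}, yielding the suboptimal $(\ln t)^{1/6}t^{-1/6}$. Working directly in the Fourier space through Hausdorff--Young circumvents this, because the $L^{6/5}$ integral of the singular kernel $|\xi|^{-1}\mathrm{e}^{-c|\xi|^4t}$ converges and returns a pure power of $t$; verifying this convergence and the precise exponent $5/6$ in the radial integral is the only genuinely delicate computation, the remaining estimates being routine once the correct H\"older and Cauchy--Schwarz splittings are identified.
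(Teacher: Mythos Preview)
Your proposal is correct and follows essentially the same route as the paper: Hausdorff--Young to pass to $\|\widehat{v}\|_{L^1}$ and $\|\widehat{v}\|_{L^{6/5}}$, the small-frequency kernel bounds $|\xi|^{-k}\mathrm{e}^{-c|\xi|^4t}$ together with $\|\widehat{v}_k\|_{L^\infty}\le\|v_k\|_{L^1}$, and on the non-small zone the exponential factor $\mathrm{e}^{-ct}$ combined with the H\"older splittings against $\langle\xi\rangle^{-(1+\epsilon_1)}\in L^2$ (for $L^\infty$) and $\langle\xi\rangle^{-(2/3+\epsilon_1)}\in L^3$ (for $L^6$). Your identification of the radial integral $\big(\int_0^{\varepsilon_0}r^{-1/5}\mathrm{e}^{-cr^4t}\,\mathrm{d}r\big)^{5/6}\simeq t^{-1/6}$ and your remark that this avoids the logarithmic loss one would incur via Gagliardo--Nirenberg are exactly the point.
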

\begin{proof}
First of all, by applying the Hausdorff-Young inequality associated with the derived pointwise estimates in the Fourier space, we may obtain
\begin{align*}
\|v(t,\cdot)\|_{L^{\infty}}
&\lesssim\sum\limits_{k=0,1}\big\|\chi_{\intt}(\xi)|\xi|^{-k}\mathrm{e}^{-c|\xi|^4t}|\widehat{v}_k(\xi)|\big\|_{L^1}+\mathrm{e}^{-ct}\big\|\big(1-\chi_{\intt}(\xi)\big)\big(|\widehat{v}_0(\xi)|+\langle\xi\rangle^{-4}|\widehat{v}_1(\xi)|\big)\big\|_{L^1}\\
&\lesssim\sum\limits_{k=0,1}\big\|\chi_{\intt}(\xi)|\xi|^{-k}\mathrm{e}^{-c|\xi|^4t}\big\|_{L^1}\|\widehat{v}_k\|_{L^{\infty}}+\mathrm{e}^{-ct}\big\|\big(1-\chi_{\intt}(\xi)\big)\langle\xi\rangle^{-(1+\epsilon_1)}\big\|_{L^2}\|(v_0,v_1)\|_{H^{1+\epsilon_1}\times L^2}
\end{align*}
with $\epsilon_1>0$. According to the facts that
\begin{align*}
\sum\limits_{k=0,1}\int_0^{\varepsilon_0}r^{-k+1}\mathrm{e}^{-cr^4t}\mathrm{d}r\lesssim (1+t)^{-\frac{1}{4}}\ \ \mbox{and}\ \ \int_{\varepsilon_0}^{\infty}\langle r\rangle^{-2(1+\epsilon_1)}r\mathrm{d}r<\infty,
\end{align*}
one concludes
\begin{align*}
	\|v(t,\cdot)\|_{L^{\infty}}\lesssim (1+t)^{-\frac{1}{4}}\|(v_0,v_1)\|_{(L^1)^2}+\mathrm{e}^{-ct}\|(v_0,v_1)\|_{H^{1+\epsilon_1}\times L^2}.
\end{align*}
By the same way as the above deduction, we also can derive
\begin{align*}
\|v(t,\cdot)\|_{L^6}&\lesssim\sum\limits_{k=0,1}\big\|\chi_{\intt}(\xi)|\xi|^{-k}\mathrm{e}^{-c|\xi|^4t}|\widehat{v}_k(\xi)|\big\|_{L^{\frac{6}{5}}}\|v_k\|_{L^1}\\
&\quad+\mathrm{e}^{-ct}\big\|\big(1-\chi_{\intt}(\xi)\big)\langle\xi\rangle^{-(\frac{2}{3}+\epsilon_1)}\big\|_{L^3}\|(v_0,v_1)\|_{H^{\frac{2}{3}+\epsilon_1}\times L^2}\\
&\lesssim (1+t)^{-\frac{1}{6}}\|(v_0,v_1)\|_{(L^1)^2}+\mathrm{e}^{-ct}\|(v_0,v_1)\|_{H^{\frac{2}{3}+\epsilon_1}\times L^2}
\end{align*}
with $\epsilon_1>0$. Our proof is complete.
\end{proof}
\begin{coro}\label{Coro-New}
Let $n=2$. Suppose that the source term $g=g(x)$ fulfills $|D|g\in L^1$ and $g\in\dot{H}^{(s-2)_+}$. Then, the following estimates hold:
\begin{align*}
\|\,|D|K_1(t,\cdot)\ast_{(x)}|D|g(\cdot)\|_{L^2}&\lesssim (1+t)^{-\frac{1}{4}}\|\,|D|g\|_{L^1}+\mathrm{e}^{-ct}\|g\|_{L^2},\\
\|\,|D|K_1(t,\cdot)\ast_{(x)}|D|g(\cdot)\|_{L^6}&\lesssim (1+t)^{-\frac{5}{12}}\|\,|D|g\|_{L^1}+\mathrm{e}^{-ct}\|g\|_{L^2},\\
\|\,|D|K_1(t,\cdot)\ast_{(x)}|D|g(\cdot)\|_{L^{\infty}}&\lesssim (1+t)^{-\frac{1}{2}}\|\,|D|g\|_{L^1}+\mathrm{e}^{-ct}\|g\|_{L^2},\\
\|\,|D|K_1(t,\cdot)\ast_{(x)}|D|g(\cdot)\|_{\dot{H}^s}&\lesssim (1+t)^{-\frac{s+1}{4}}\|\,|D|g\|_{L^1}+\mathrm{e}^{-ct}\|g\|_{\dot{H}^{(s-2)_+}}.
\end{align*}
\end{coro}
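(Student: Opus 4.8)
The plan is to pass to the Fourier side, where $|D|K_1(t,\cdot)\ast_{(x)}|D|g(\cdot)$ has symbol $|\xi|\widehat{K}_1(t,\xi)\cdot|\xi|\widehat{g}(\xi)=|\xi|^2\widehat{K}_1(t,\xi)\widehat{g}(\xi)$, and to estimate every norm by decomposing via $\chi_{\intt}(\xi)$ and its complement $1-\chi_{\intt}(\xi)=\chi_{\bdd}(\xi)+\chi_{\extt}(\xi)$, exactly as in Corollary \ref{Coro-Linear-L6}. The structural point, and the reason the extra factor $|D|$ is built into the statement, is that for small frequencies $|\xi|\widehat{K}_1$ is \emph{non-singular} at the origin: Proposition \ref{Prop-Refined-Error-01} together with \eqref{G0-G1-fun} gives $\chi_{\intt}(\xi)|\xi|\,|\widehat{K}_1|\lesssim\chi_{\intt}(\xi)\big(|\sin(|\xi|t+\frac{1}{2}|\xi|^3t)|+|\xi|\big)\mathrm{e}^{-c|\xi|^4t}$, so the singular factor $|\xi|^{-1}$ inside $\widehat{\ml{G}}_1$ is cancelled and the low-frequency multiplier becomes integrable in $\mb{R}^2$.

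For the low-frequency contribution I would, in each of the four norms, detach the source by the Hausdorff--Young inequality in the form $\||\xi|\widehat{g}\|_{L^{\infty}}\leqslant\|\,|D|g\|_{L^1}$, reducing matters to an $L^q$ norm of $\chi_{\intt}(\xi)|\xi|^{\alpha}\widehat{K}_1$ with $\alpha=1$ for the $L^2,L^6,L^{\infty}$ bounds and $\alpha=s+1$ for the $\dot{H}^s$ bound. Passing to polar coordinates in $\mb{R}^2$ and using the non-singular bound above, each such norm is dominated by an elementary integral $\int_0^{\infty}r^{\beta}\mathrm{e}^{-cr^4t}\,\mathrm{d}r$, and the scaling $\eta=rt^{1/4}$ produces precisely $t^{-\frac{1}{4}}$ from $\|\cdot\|_{L^2}$, $t^{-\frac{5}{12}}$ from $\|\cdot\|_{L^{6/5}}$ (here I use $\|w\|_{L^6}\leqslant\|\widehat{w}\|_{L^{6/5}}$), $t^{-\frac{1}{2}}$ from $\|\cdot\|_{L^1}$ (here $\|w\|_{L^{\infty}}\leqslant\|\widehat{w}\|_{L^1}$), and $t^{-\frac{s+1}{4}}$ for $\dot{H}^s$; this is the same scaling mechanism already exploited in Lemma \ref{Lem-Optimal-Est}.

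On the remaining zones I would invoke the large-frequency expansion \eqref{Expan-large-01}, giving $\widehat{K}_1\approx\mathrm{e}^{-t/(2\mu)}/(2\mu|\xi|^4)$ up to lower-order terms, so that $(1-\chi_{\intt}(\xi))|\xi|^{m}|\widehat{K}_1|\lesssim\mathrm{e}^{-ct}|\xi|^{m-4}$, while on the bounded zone $\mathrm{Re}\,\lambda_{\pm}<0$ already supplies the factor $\mathrm{e}^{-ct}$. For the $L^2$ bound a supremum (multiplier) bound then lets $\widehat{g}$ sit in $L^2$; for the $L^{\infty}$ bound a Cauchy--Schwarz splitting does the same; for the $L^6$ bound a H\"older split with $\frac{5}{6}=\frac{1}{3}+\frac{1}{2}$ moves $(1-\chi_{\intt})|\xi|^2\widehat{K}_1$ into $L^3$ and $\widehat{g}$ into $L^2$; in each case the term $\mathrm{e}^{-ct}\|g\|_{L^2}$ results. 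For the $\dot{H}^s$ bound I would instead write $|\xi|^{s+2}\widehat{K}_1\widehat{g}=\big((1-\chi_{\intt})|\xi|^{s+2-(s-2)_+}\widehat{K}_1\big)\,|\xi|^{(s-2)_+}\widehat{g}$, absorbing the uniformly $\mathrm{e}^{-ct}$-small first factor and leaving $\||\xi|^{(s-2)_+}\widehat{g}\|_{L^2}=\|g\|_{\dot{H}^{(s-2)_+}}$.

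The scaling integrals and the zone-by-zone bookkeeping are routine; the genuinely delicate points are two integrability checks in $n=2$. First, one must confirm that $|\xi|\widehat{K}_1$ is non-singular at $\xi=0$, so that the low-frequency $L^1$ and $L^{6/5}$ norms entering the $L^{\infty}$ and $L^6$ estimates are finite --- this is exactly the cancellation highlighted above. Second, one must verify that $(1-\chi_{\intt})|\xi|^{m}\widehat{K}_1$ lies in $L^3$ (for the $L^6$ bound) and in $L^2$ (for the $L^{\infty}$ bound) over the exterior zone, which holds precisely because the two derivatives carried by $|D|$ are balanced against the four powers $|\xi|^{-4}$ coming from \eqref{Expan-large-01}; with $m=2$ this leaves $|\xi|^{-2}$, whose third and second powers are integrable against $r\,\mathrm{d}r$ in $\mb{R}^2$.
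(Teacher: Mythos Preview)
Your approach is correct and coincides with the paper's: the paper proves this corollary simply by writing ``Repeating the same procedures as those in the proofs of Theorem \ref{Thm-Linear-Opt-Est} and Corollary \ref{Coro-Linear-L6}, we may directly deduce all estimates,'' and what you have spelled out --- the Fourier-side splitting via $\chi_{\intt}$, the Hausdorff--Young detachment $\||\xi|\widehat{g}\|_{L^\infty}\leqslant\||D|g\|_{L^1}$ for small frequencies, the scaling $\eta=rt^{1/4}$, and the $\mathrm{e}^{-ct}\langle\xi\rangle^{-4}$ control from \eqref{Expan-large-01} on the complement --- is exactly that repetition. Your two integrability checks (the cancellation of the $|\xi|^{-1}$ singularity in $\widehat{\ml{G}}_1$ and the $L^2$/$L^3$ integrability of $|\xi|^{-2}$ against $r\,\mathrm{d}r$ on the exterior zone) are the only non-routine points, and both are handled correctly.
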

\begin{proof}
Repeating the same procedures as those in the proofs of Theorem \ref{Thm-Linear-Opt-Est} and Corollary \ref{Coro-Linear-L6}, we may directly deduce all estimates.
\end{proof}

\subsection{Optimal leading terms for the linearized model}
$\ \ \ \ $Let us introduce the large time profiles $v^{(k,p)}=v^{(k,p)}(t,x)$ with $k=1,2$ for the linearized dissipative Boussinesq equation \eqref{Eq-Linear-Dissipative-Boussinesq} in the formulas \eqref{Profile-v1p} and \eqref{Profile-v2p}. We now recall the weighted $L^1$ space via
\begin{align*}
	L^{1,1}:=\left\{f\in L^1 \ :\ \|f\|_{L^{1,1}}:=\int_{\mb{R}^n}(1+|x|)|f(x)|\mathrm{d}x<\infty \right\}.
\end{align*}
We then conclude the following optimal decay estimates for the error term so that one may claim the function $v^{(1,p)}$ to be the optimal leading term of solution.
\begin{theorem}\label{Thm-Linear-Leading-Term}
Suppose that initial data $v_0\in H^s\cap L^1$ and $v_1\in H^s\cap L^{1,1}$ with $s\geqslant 0$. Then, the solution to the linearized dissipative Boussinesq equation \eqref{Eq-Linear-Dissipative-Boussinesq} fulfills the following optimal error estimate:
\begin{align}\label{Est-Lin-Lead-01}
	t^{-\frac{2s+n}{8}}\mb{A}_{\lin}\lesssim\|v(t,\cdot)-v^{(1,p)}(t,\cdot)\|_{\dot{H}^s}\lesssim t^{-\frac{2s+n}{8}}\|(v_0,v_1)\|_{(H^s\cap L^1)\times(H^{(s-1)_+}\cap L^{1,1})}
\end{align}
for large time $t\gg1$, provided that $0\neq\mb{A}_{\lin}:=|P_{v_1}|+|P_{v_c}|+|M_{v_1}|$ carrying the combined data $v_c(x):=v_0(x)-\frac{2s+n}{64\mu}v_1(x)$. Moreover, the further error estimate holds
\begin{align}\label{Est-Lin-Lead-02}
\|v(t,\cdot)-v^{(1,p)}(t,\cdot)-v^{(2,p)}(t,\cdot)\|_{\dot{H}^s}=o(t^{-\frac{2s+n}{8}})
\end{align}
for large time $t\gg1$, where the right-hand side depends on $\|v_0\|_{H^s\cap L^1}$ and $\|v_1\|_{H^s\cap L^{1,1}}$.
\end{theorem}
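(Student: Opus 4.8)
The plan is to work entirely on the Fourier side, where $\widehat{v}=\widehat{K}_0\widehat{v}_0+\widehat{K}_1\widehat{v}_1$ and $\widehat{v^{(1,p)}}=\widehat{\ml{G}}_1P_{v_1}$. Since the bounded and large frequency zones contribute only an exponentially small $\mathrm{e}^{-ct}$ remainder (by \eqref{Expan-large-01} and $\mathrm{Re}\,\lambda_\pm<0$ on $\ml{Z}_{\bdd}$), everything reduces to $\xi\in\ml{Z}_{\intt}(\varepsilon_0)$. There I would insert the refined kernel expansions of Proposition \ref{Prop-Refined-Error-01}, writing $\widehat{K}_0=\widehat{\ml{G}}_0+R_0$ and $\widehat{K}_1=\widehat{\ml{G}}_1-\frac{t}{8}|\xi|^4\widehat{\ml{G}}_0+R_1$ with $|R_j|\lesssim|\xi|\mathrm{e}^{-c|\xi|^4t}$, together with the Taylor expansions $\widehat{v}_0=P_{v_0}+(\widehat{v}_0-P_{v_0})$ and $\widehat{v}_1=P_{v_1}-i\xi\circ M_{v_1}+r_1$, where $r_1(\xi):=\int_{\mb{R}^n}v_1(y)(\mathrm{e}^{-iy\circ\xi}-1+iy\circ\xi)\,\mathrm{d}y$. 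Multiplying out yields the master identity $\widehat{v}-\widehat{v^{(1,p)}}=\widehat{v^{(2,p)}}+\ml{E}$, with $\widehat{v^{(2,p)}}=\widehat{\ml{G}}_0P_{v_0}-i(\xi\circ M_{v_1})\widehat{\ml{G}}_1-\frac{t}{8}|\xi|^4\widehat{\ml{G}}_0P_{v_1}$ and $\ml{E}$ collecting the higher-order products.

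For the upper bound in \eqref{Est-Lin-Lead-01} I would split $\widehat{v}-\widehat{v^{(1,p)}}$ into $\widehat{K}_0\widehat{v}_0$, $(\widehat{K}_1-\widehat{\ml{G}}_1)\widehat{v}_1$ and $\widehat{\ml{G}}_1(\widehat{v}_1-P_{v_1})$. The first two are dominated by $\chi_{\intt}(\xi)\mathrm{e}^{-c|\xi|^4t}|\widehat{v}_k|$ via \eqref{Est-K1-01} and the trivial bound on $\widehat{K}_0$; the Hausdorff--Young inequality ($|\widehat{v}_k|\le\|v_k\|_{L^1}$) with $\int_{\mb{R}^n}|\xi|^{2s}\mathrm{e}^{-2c|\xi|^4t}\,\mathrm{d}\xi\simeq t^{-\frac{2s+n}{4}}$ then give the order $t^{-\frac{2s+n}{8}}$. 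For the third term I would use $|\widehat{v}_1(\xi)-P_{v_1}|\le|\xi|\,\|v_1\|_{L^{1,1}}$ to absorb the singular factor $|\xi|^{-1}$ in $\widehat{\ml{G}}_1$, again landing on $t^{-\frac{2s+n}{8}}$. This is precisely why bare $L^1$ on $v_0$ but $L^{1,1}$ on $v_1$ suffices.

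The estimate \eqref{Est-Lin-Lead-02} reduces to $\|\ml{E}\|_{\dot{H}^s}=o(t^{-\frac{2s+n}{8}})$ and is the technical heart. The error terms fall into two families. Those carrying an extra algebraic factor --- $R_0\widehat{v}_0$, $R_1\widehat{v}_1$ and $\frac{t}{8}|\xi|^4\widehat{\ml{G}}_0(i\xi\circ M_{v_1})$ --- gain a full power of $|\xi|$ (equivalently a factor $t|\xi|^5$ against $\widehat{\ml{G}}_0$), hence decay like $t^{-\frac{2s+n+2}{8}}$ by Lemma \ref{Lem-Optimal-Est}, which is $o(t^{-\frac{2s+n}{8}})$. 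The delicate terms are $\widehat{\ml{G}}_0(\widehat{v}_0-P_{v_0})$ and $\widehat{\ml{G}}_1 r_1$, whose extra factors are merely $o(1)$ and $o(|\xi|)$ as $\xi\to0$ under the bare $L^1$/$L^{1,1}$ hypotheses. Here I would rescale $\xi=\rho\,t^{-1/4}$, so that $\mathrm{e}^{-2\mu|\xi|^4t}=\mathrm{e}^{-2\mu\rho^4}$ and the relevant $\dot{H}^s$ integral becomes $t^{-\frac{2s+n}{4}}\int|\rho|^{2s}\mathrm{e}^{-2\mu\rho^4}|\widehat{v}_0(\rho t^{-1/4})-P_{v_0}|^2\,\mathrm{d}\rho$ (and analogously with $r_1/|\xi|$); since $\widehat{v}_0(\rho t^{-1/4})\to P_{v_0}$ pointwise and the integrand is dominated by an integrable Gaussian profile, dominated convergence forces the integral to $o(1)$, giving the desired $o(t^{-\frac{2s+n}{4}})$ for the square.

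Finally, for the lower bound in \eqref{Est-Lin-Lead-01} I would use the reverse triangle inequality $\|v-v^{(1,p)}\|_{\dot{H}^s}\ge\|v^{(2,p)}\|_{\dot{H}^s}-o(t^{-\frac{2s+n}{8}})$ from \eqref{Est-Lin-Lead-02}, and bound $\|v^{(2,p)}\|_{\dot{H}^s}$ below. The key structural point is that the $\ml{G}_0$-block $\widehat{\ml{G}}_0(P_{v_0}-\frac{t}{8}|\xi|^4P_{v_1})$ is real (a cosine), while the $\nabla\ml{G}_1$-block $-i(\xi\circ M_{v_1})\widehat{\ml{G}}_1$ is purely imaginary (a sine), so their cross term in $|\widehat{v^{(2,p)}}|^2$ vanishes pointwise; what survives is $\int|\xi|^{2s}|\widehat{\ml{G}}_0|^2|P_{v_0}-\frac{t}{8}|\xi|^4P_{v_1}|^2\,\mathrm{d}\xi+\int|\xi|^{2s}|\xi\circ M_{v_1}|^2|\widehat{\ml{G}}_1|^2\,\mathrm{d}\xi$. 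Rescaling $\xi=\rho t^{-1/4}$ and replacing $\cos^2,\sin^2$ by their averages via the Riemann--Lebesgue lemma (as in Lemma \ref{Lem-Optimal-Est}) sends the $M_{v_1}$-integral to $\gtrsim|M_{v_1}|^2t^{-\frac{2s+n}{4}}$ and turns the first into $t^{-\frac{2s+n}{4}}$ times the quadratic form $Q(P_{v_0},P_{v_1})=\int_0^\infty\rho^{2s+n-1}\mathrm{e}^{-2\mu\rho^4}|P_{v_0}-\frac{\rho^4}{8}P_{v_1}|^2\,\mathrm{d}\rho$. Evaluating the moments $A_k=\int_0^\infty\rho^{2s+n-1+4k}\mathrm{e}^{-2\mu\rho^4}\,\mathrm{d}\rho$ through the Gamma function gives $A_1/A_0=\frac{2s+n}{8\mu}$, so completing the square about $P_{v_0}=\frac{A_1}{8A_0}P_{v_1}=\frac{2s+n}{64\mu}P_{v_1}$ diagonalizes $Q=A_0|P_{v_c}|^2+\frac{A_0A_2-A_1^2}{64A_0}|P_{v_1}|^2$ with both coefficients positive by Cauchy--Schwarz; this is exactly the origin of the combined datum $v_c$. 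Hence $\|v^{(2,p)}\|_{\dot{H}^s}^2\gtrsim t^{-\frac{2s+n}{4}}(|P_{v_c}|^2+|P_{v_1}|^2+|M_{v_1}|^2)\gtrsim t^{-\frac{2s+n}{4}}\mb{A}_{\lin}^2$, which closes the argument. I expect the $o$-estimate \eqref{Est-Lin-Lead-02} under the bare $L^1$/$L^{1,1}$ assumptions --- specifically the rescaling-plus-dominated-convergence treatment of the Taylor remainders $\widehat{v}_0-P_{v_0}$ and $r_1$ --- to be the main obstacle, with the cross-term cancellation and the moment diagonalization being the decisive ideas for the sharp lower bound.
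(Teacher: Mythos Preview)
Your proposal is correct and follows the same overall architecture as the paper: reduce to small frequencies, insert the refined kernel expansions of Proposition \ref{Prop-Refined-Error-01} and the Taylor expansion of $\widehat{v}_1$, identify $v^{(2,p)}$ as the leading remainder, and then exploit the real/imaginary orthogonality of the $\ml{G}_0$- and $\nabla\ml{G}_1$-blocks together with the Gamma-function moment computation and completion of the square (which is exactly where $v_c$ comes from in the paper too).

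The one genuine technical difference is your treatment of the ``delicate'' $o$-terms $\widehat{\ml{G}}_0(\widehat{v}_0-P_{v_0})$ and $\widehat{\ml{G}}_1 r_1$. You stay on the Fourier side, rescale $\xi=\rho t^{-1/4}$, and invoke dominated convergence using $\widehat{v}_0(\rho t^{-1/4})\to P_{v_0}$ and $r_1(\rho t^{-1/4})/|\rho t^{-1/4}|\to 0$ (bounded by $\|v_0\|_{L^1}$ and $\|v_1\|_{L^{1,1}}$ respectively). The paper instead passes back to physical space: it writes these pieces as convolution integrals $\int(\ml{G}_j(t,x-y)-\ml{G}_j(t,x))v_k(y)\,\mathrm{d}y$, splits at $|y|\le t^{1/16}$ (or $t^{1/32}$ for the second-order piece), applies the mean value theorem on the inner region to gain an extra $|\xi|$, and uses the vanishing tail $\|v_k\|_{L^1(|y|\ge t^{1/16})}\to 0$ on the outer region. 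Both routes yield only a qualitative $o(t^{-\frac{2s+n}{8}})$ with no explicit rate under bare $L^1$/$L^{1,1}$ data; your Fourier-side argument is arguably cleaner since it never leaves frequency space, while the paper's splitting makes the mechanism (loss of mass at infinity) more visible and yields explicit intermediate rates like $t^{-\frac{3}{16}-\frac{2s+n}{8}}$ for the compactly-truncated pieces.
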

\begin{proof}
Our starting point is to prove the estimate \eqref{Est-Lin-Lead-02}. We carry out the decomposition
\begin{align*}
v(t,x)-v^{(1,p)}(t,x)-v^{(2,p)}(t,x)=\sum\limits_{k=1,\dots,4}\ml{E}_k(t,x),
\end{align*}
whose components are chosen by
\begin{align*}
\ml{E}_1(t,x)&:=v(t,x)-\ml{G}_1(t,|D|)v_1(x)+\frac{t}{8}\Delta^2\ml{G}_0(t,|D|)v_1(x)-\ml{G}_0(t,|D|)v_0(x),\\
\ml{E}_2(t,x)&:=\ml{G}_1(t,|D|)v_1(x)-\ml{G}_1(t,x)P_{v_1}+\nabla\ml{G}_1(t,x)\circ M_{v_1},\\
\ml{E}_3(t,x)&:=-\frac{t}{8}\left(\Delta^2\ml{G}_0(t,|D|)v_1(x)-\Delta^2\ml{G}_0(t,x)P_{v_1}\right),\\
\ml{E}_4(t,x)&:=\ml{G}_0(t,|D|)v_0(x)-\ml{G}_0(t,x)P_{v_0}.
\end{align*}
Recalling the pointwise estimate \eqref{Est-K1-02-K0}, we find
\begin{align*}
\|\chi_{\intt}(\xi)|\xi|^s\widehat{\ml{E}}_1(t,\xi)\|_{L^2}\lesssim\left\|\chi_{\intt}(\xi)|\xi|^{s+1}\mathrm{e}^{-c|\xi|^4t}\right\|_{L^2}\|(v_0,v_1)\|_{(L^1)^2}\lesssim t^{-\frac{2s+2+n}{8}}\|(v_0,v_1)\|_{(L^1)^2}.
\end{align*}
Concerning bounded and large frequencies, it holds
\begin{align*}
\left\|\big(\chi_{\bdd}(\xi)+\chi_{\extt}(\xi)\big)|\xi|^s\widehat{\ml{E}}_1(t,\xi)\right\|_{L^2}\lesssim\mathrm{e}^{-ct}\|(v_0,v_1)\|_{(H^s)^2}
\end{align*}
so that
\begin{align*}
\|\ml{E}_1(t,\cdot)\|_{\dot{H}^s}\lesssim t^{-\frac{2s+2+n}{8}}\|(v_0,v_1)\|_{(H^s\cap L^1)^2}.
\end{align*}
For the third and fourth error terms, analogously to \eqref{Est-optimal-proc-02}, we are able to deduce
\begin{align*}
\|\ml{E}_3(t,\cdot)\|_{\dot{H}^s}+\|\ml{E}_4(t,\cdot)\|_{\dot{H}^s}=o(t^{-\frac{2s+n}{8}})
\end{align*}
for large time $t\gg1$, since the assumption $v_0,v_1\in L^1$. In order to treat the final error term $\ml{E}_2(t,x)$, we consider Taylor's expansion
\begin{align}\label{Taylor-second}
|\ml{G}_1(t,x-y)-\ml{G}_1(t,x)+y\circ\nabla\ml{G}_1(t,x)|\lesssim |y|^2|\nabla^2\ml{G}_1(t,x-\eta_3y)|
\end{align}
with $\eta_3\in(0,1)$. Moreover, $\ml{E}_2(t,x)$ can be separated into the sum of next terms:
\begin{align*}
\ml{E}_{2,1}(t,x)&:=\int_{|y|\leqslant t^{\frac{1}{32}}}\big(\ml{G}_1(t,x-y)-\ml{G}_1(t,x)+y\circ\nabla\ml{G}_1(t,x)\big)v_1(y)\mathrm{d}y,\\
\ml{E}_{2,2}(t,x)&:=\int_{|y|\geqslant t^{\frac{1}{32}}}\big(\ml{G}_1(t,x-y)-\ml{G}_1(t,x)\big)v_1(y)\mathrm{d}y+\int_{|y|\geqslant t^{\frac{1}{32}}}y\circ\nabla \ml{G}_1(t,x)v_1(y)\mathrm{d}y.
\end{align*}
By using \eqref{Taylor-second},  we obtain
\begin{align*}
	\|\ml{E}_{2,1}(t,\cdot)\|_{\dot{H}^s}\lesssim t^{\frac{1}{16}}\|\,|\xi|^{s+2}\widehat{\ml{G}}_1(t,|\xi|)\|_{L^2}\|v_1\|_{L^1}\lesssim t^{-\frac{3}{16}-\frac{2s+n}{8}}\|v_1\|_{L^1}.
\end{align*}
Moreover, the inequality \eqref{Taylor-first} implies
\begin{align*}
\|\ml{E}_{2,2}(t,\cdot)\|_{\dot{H}^s}&\lesssim \|\,|\xi|^{s+1}\widehat{\ml{G}}_1(t,|\xi|)\|_{L^2}\|\,|x|v_1\|_{L^1(|x|\geqslant t^{\frac{1}{32}})}=o(t^{-\frac{2s+n}{8}})
\end{align*}
for large time $t\gg1$, thanks to the additional weighted $L^1$ assumption $v_1\in L^{1,1}$. It gives
\begin{align*}
\sum\limits_{k=1,\dots,4}\|\ml{E}_k(t,\cdot)\|_{\dot{H}^s}\lesssim t^{-\frac{1}{4}-\frac{2s+n}{8}}\|(v_0,v_1)\|_{(H^s\cap L^1)^2}+t^{-\frac{3}{16}-\frac{2s+n}{8}}\|v_1\|_{L^1}+o(t^{-\frac{2s+n}{8}}),
\end{align*}
which leads to our desire error estimate \eqref{Est-Lin-Lead-02}.

Indeed, due to the hypothesis $v_1\in L^{1,1}$ and \cite[Lemma 2.2]{Ikehata=2014}, we may deduce
\begin{align*}
|\widehat{v}_1(\xi)-P_{v_1}|\lesssim|\xi|\,\|v_1\|_{L^{1,1}}
\end{align*}
so that the estimate \eqref{Est-optimal-proc-02} is improved by
\begin{align*}
\left\|\chi_{\intt}(\xi)|\xi|^s\widehat{\ml{G}}_1(t,|\xi|)\big(\widehat{v}_1(\xi)-P_{v_1}\big)\right\|_{L^2}\lesssim\left\|\chi_{\intt}(\xi)|\xi|^s\mathrm{e}^{-c|\xi|^4t}\right\|_{L^2}\|v_1\|_{L^{1,1}}\lesssim t^{-\frac{2s+n}{8}}\|v_1\|_{L^{1,1}}.
\end{align*}
According to the last line and the next estimate (deduced from Proposition \ref{Prop-Refined-Error-01}) associated with the triangle inequality:
\begin{align*}
\left\|\chi_{\intt}(\xi)|\xi|^s\big(\widehat{v}(t,\xi)-\widehat{\ml{G}}_1(t,|\xi|)\widehat{v}_1(\xi)\big)\right\|_{L^2}\lesssim t^{-\frac{2s+n}{8}}\|(v_0,v_1)\|_{( L^1)^2},
\end{align*}
we complete the proof of the upper bound estimate in \eqref{Est-Lin-Lead-01} immediately.

In order to derive the optimal lower bound estimate for the error term in \eqref{Est-Lin-Lead-01}, the crucial point is to estimate the second profile $v^{(2,p)}(t,\cdot)$ in the $\dot{H}^s$ norm from the below. The partial Fourier transform for the profile is given by
\begin{align*}
	\widehat{v}^{(2,p)}(t,\xi)=\widehat{\ml{G}}_0(t,|\xi|)P_{v_0}-i\widehat{\ml{G}}_1(t,|\xi|)\xi\circ M_{v_1}-\frac{t}{8}|\xi|^4\widehat{\ml{G}}_0(t,|\xi|)P_{v_1}.
\end{align*}
As a consequence, one notices
\begin{align*}
\|\,|\xi|^s\widehat{v}^{(2,p)}(t,\xi)\|_{L^2}^2&=\int_{\mb{R}^n}|\xi|^{2s}|\widehat{\ml{G}}_1(t,|\xi|)|^2|\xi\circ M_{v_1}|^2\mathrm{d}\xi+\int_{\mb{R}^n}|\xi|^{2s}\left|P_{v_0}-\frac{t}{8}|\xi|^4P_{v_1}\right|^2|\widehat{\ml{G}}_0(t,|\xi|)|^2\mathrm{d}\xi\\
&=:A_1(t)+A_2(t).
\end{align*}
By applying the polar coordinates, we arrive at
\begin{align}
A_1(t)&=\int_0^{\infty}r^{2s+n+1}|\widehat{\ml{G}}_1(t,r)|^2\mathrm{d}r\int_{|\omega|=1}|\omega\circ M_{v_1}|^2\mathrm{d}\sigma_{\omega}\notag\\
&\gtrsim\|\chi_{\intt}(\xi)|\xi|^{s+1}\widehat{\ml{G}}_1(t,|\xi|)\|_{L^2}^2|M_{v_1}|^2\notag\\
&\gtrsim t^{-\frac{2s+n}{4}}|M_{v_1}|^2,\label{Est-A1}
\end{align}
where the optimal estimate \eqref{Est-Optimal-01} was used. For another, by setting $\eta=rt^{\frac{1}{4}}$, we may separate $A_2(t)$ into three parts via
\begin{align}
A_2(t)&=C_n\int_0^{\infty}r^{2s+n-1}\left|P_{v_0}-\frac{t}{8}r^4P_{v_1}\right|^2|\widehat{\ml{G}}_0(t,r)|^2\mathrm{d}r\notag\\
&=C_n\int_0^{\infty}r^{2s+n-1}\left(|P_{v_0}|^2-\frac{t}{4}r^4P_{v_0}P_{v_1}+\frac{t^2}{64}r^8|P_{v_1}|^2\right)\left|\cos\left(rt+\frac{1}{2}r^3t\right)\right|^2\mathrm{e}^{-2\mu r^4t}\mathrm{d}r\notag\\
&=C_n t^{-\frac{2s+n}{4}}\big(A_{2,0}(t)+A_{2,1}(t)+A_{2,2}(t)\big)\label{Est-A2}
\end{align}
with a positive constant $C_n$, where we took
\begin{align*}
A_{2,0}(t)=|P_{v_0}|^2\widetilde{A}(t;0),\ \ 
A_{2,1}(t)=-\frac{1}{4}P_{v_0}P_{v_1}\widetilde{A}(t;1),\ \ 
A_{2,2}(t)=\frac{1}{64}|P_{v_1}|^2\widetilde{A}(t;2),
\end{align*}
carrying the time-dependent function with $m=0,1,2$ as follows:
\begin{align*}
\widetilde{A}(t;m):=\int_0^{\infty}\eta^{2s+n-1+4m}\left|\cos\left(\eta t^{\frac{3}{4}}+\frac{1}{2}\eta^3t^{\frac{1}{4}}\right)\right|^2\mathrm{e}^{-2\mu\eta^4}\mathrm{d}\eta.
\end{align*}
Strongly motivated by the proof of Lemma \ref{Lem-Optimal-Est}, the Riemann-Lebesgue theorem yields
\begin{align*}
\widetilde{A}(t;m)&=\frac{1}{2}\int_0^{\infty}\eta^{2s+n-1+4m}\mathrm{e}^{-2\mu\eta^4}\mathrm{d}\eta+\frac{1}{2}\int_0^{\infty}\eta^{2s+n-1+4m}\cos\left(2\eta t^{\frac{3}{4}}+\eta^3t^{\frac{1}{4}}\right)\mathrm{e}^{-2\mu\eta^4}\mathrm{d}\eta\\
&=\frac{1}{2}(2\mu)^{-\frac{2s+n+4m}{4}}\int_0^{\infty}\tilde{\eta}^{2s+n-1+4m}\mathrm{e}^{-\tilde{\eta}^4}\mathrm{d}\tilde{\eta}+o(1)\\
&=\frac{1}{8}(2\mu)^{-\frac{2s+n+4m}{4}}\Gamma\left(\frac{2s+n}{4}+m\right)+o(1)
\end{align*}
for large time $t\gg1$, where we considered the Gamma function
\begin{align*}
\Gamma(z):=\int_0^{\infty}\eta^{z-1}\mathrm{e}^{-\eta}\mathrm{d}\eta=4\int_0^{\infty}\tilde{\eta}^{4z-1}\mathrm{e}^{-\tilde{\eta}^4}\mathrm{d}\tilde{\eta}. 
\end{align*}
Then, we are able to deduce
\begin{align}\label{Est-A-sum}
\sum\limits_{m=0,1,2}A_{2,m}(t)=\frac{1}{8}(2\mu)^{-\frac{2s+n+8}{4}}\Gamma\left(\frac{2s+n}{4}\right)\left[\,\left|2\mu P_{v_0}-\frac{2s+n}{32}P_{v_1}\right|^2+\frac{2s+n}{256}|P_{v_1}|^2\right]+o(1)
\end{align}
for large time $t\gg1$, where the well-known property of the Gamma function $\Gamma(z+1)=z\Gamma(z)$ was taken. In conclusion, recalling the definition of the combined data $v_c(x)$, from \eqref{Est-A1}-\eqref{Est-A-sum}, we derive the optimal lower bound estimate
\begin{align*}
	\|\,|\xi|^s\widehat{v}^{(2,p)}(t,\xi)\|_{L^2}^2\gtrsim t^{-\frac{2s+n}{4}}\left(|P_{v_1}|^2+|P_{v_c}|^2+|M_{v_1}|^2\right)
\end{align*}
for large time $t\gg1$. To end our proof, let us employ the Minkowski inequality to get
\begin{align*}
\|v(t,\cdot)-v^{(1,p)}(t,\cdot)\|_{\dot{H}^s}&\geqslant \|v^{(2,p)}(t,\cdot)\|_{\dot{H}^s}-\|v(t,\cdot)-v^{(1,p)}(t,\cdot)-v^{(2,p)}(t,\cdot)\|_{\dot{H}^s}\\
&\gtrsim t^{-\frac{2s+n}{8}}\big(|P_{v_1}|+|P_{v_c}|+|M_{v_1}|\big)
\end{align*}
for large time $t\gg1$, in which we applied the further error estimate \eqref{Est-Lin-Lead-02}.
\end{proof}
\begin{remark}\label{Rem-3.3}
In Theorem \ref{Thm-Linear-Leading-Term}, we investigate the optimal leading term, which is sharper than the profile derived in \cite[Theorem 1]{Su-Wang=2020} for the linearized dissipative Boussinesq equation \eqref{Eq-Linear-Dissipative-Boussinesq}. We may interpret it as the higher order diffusion waves, which is the reason for the growing phenomenon when $n=1,2$ in Theorem \ref{Thm-Linear-Opt-Est}. 
\end{remark}
\begin{remark}
By subtracting the optimal leading term $v^{(1,p)}$ in the $L^2$ framework, the optimal estimates in Theorem \ref{Thm-Linear-Opt-Est} can be improved. Particularly, this improvement is optimal provided that the new condition $\mb{A}_{\lin}\neq 0$ holds. We believe that this philosophy can be generalized to the linearized Boussinesq equation with structural damping in  \cite{Ikehata-Soga=2015}. Furthermore, due to an  improvement on the decay rate by subtracting the additional function $v^{(2,p)}$ in \eqref{Est-Lin-Lead-02}, we may explain it as a second  profile of solution for large time.
\end{remark}

\section{Global (in time) behavior of solution for the nonlinear model}\label{Section-GESDS}\setcounter{equation}{0}

\subsection{Philosophy of our proof for global (in time) existence}
$\ \ \ \ $Concerning any $T>0$, we now introduce the family $\{X_s(T)\}_{T>0}$ with $s\in[1,\infty)$ of evolution spaces
\begin{align*}
X_s(T):=\begin{cases}
\ml{C}([0,T],H^s\cap L^6)&\mbox{for}\ \ n=2\ \ \mbox{with}\ \ p=4 \ \ \mbox{when}\ \ s\in[1,3),\\
\ml{C}([0,T],H^s\cap L^6\cap L^{\infty})&\mbox{for}\ \ n=2\ \ \mbox{with}\ \ p=4 \ \ \mbox{when}\ \ s\in[3,\infty),\\
\ml{C}([0,T],H^s)&\mbox{otherwise},
\end{cases}
\end{align*}
carrying its norm excluding the special case $n=2$ with $p=4$ as follows:
\begin{align*}
\|u\|_{X_s(T)}:=\sup\limits_{t\in[0,T]}\left([\ml{D}_n(1+t)]^{-1}\|u(t,\cdot)\|_{L^2}+(1+t)^{\frac{2(s-1)+n}{8}}\|u(t,\cdot)\|_{\dot{H}^s}\right).
\end{align*}
and the norm for the limit case $n=2$ with $p=4$ as follows:
\begin{align*}
\|u\|_{X_s(T)}:=\begin{cases}
\sup\limits_{t\in[0,T]}\left([\ml{D}_2(1+t)]^{-1}\|u(t,\cdot)\|_{L^2}+(1+t)^{\frac{1}{6}}\|u(t,\cdot)\|_{L^6}+(1+t)^{\frac{s}{4}}\|u(t,\cdot)\|_{\dot{H}^s}\right.\\
\left.\qquad\quad +(1+t)^{\frac{1}{4}}\|u(t,\cdot)\|_{\dot{H}^1}\right)\ \ \mbox{when}\ \ s\in[1,3),\\ \\
\sup\limits_{t\in[0,T]}\left([\ml{D}_2(1+t)]^{-1}\|u(t,\cdot)\|_{L^2}+(1+t)^{\frac{1}{6}}\|u(t,\cdot)\|_{L^6}+(1+t)^{\frac{s}{4}}\|u(t,\cdot)\|_{\dot{H}^s}\right.\\
\left.\qquad\quad +(1+t)^{\frac{1}{4}}\|u(t,\cdot)\|_{\dot{H}^1}+(1+t)^{\frac{1}{4}}\|u(t,\cdot)\|_{L^{\infty}}\right)\ \ \mbox{when}\ \ s\in[3,\infty).
\end{cases}
\end{align*}
The above time-weighted Sobolev norms are strongly motivated by the optimal estimates of solution to the linearized model \eqref{Eq-Linear-Dissipative-Boussinesq}, precisely, \eqref{Est-Lin-Solution} and \eqref{Est-Lin-Derivative} with $s-1\geqslant 0$ in Theorem \ref{Thm-Linear-Opt-Est}, as well as the estimates in Corollary \ref{Coro-Linear-L6}. Note that the additional regularities of evolution spaces when $n=2$ with $p=4$ will contribute to overcoming the logarithmic growth $\ml{D}_2(1+t)$ of $\|u(t,\cdot)\|_{L^2}$ and retaining the optimal growth rate.

Let us recall the fundamental solutions $K_0=K_0(t,x)$ and $K_1=K_1(t,x)$, respectively, with initial data $(u_0,u_1)=(\delta_0,0)$ and $(u_0,u_1)=(0,\delta_0)$, studied in Section \ref{Section-Linearized-Model}. Here, $\delta_0$ denotes the Dirac distribution at $x=0$. Inspired by Duhamel's principle, we may introduce the operator $\Phi$ such that
\begin{align*}
\Phi:u(t,x)\in X_s(T)\to\Phi[u](t,x):=u^{\lin}(t,x)+u^{\non}(t,x),
\end{align*}
in which $u^{\lin}(t,x)\equiv v(t,x)$ is the solution to the corresponding linearized Cauchy problem \eqref{Eq-Linear-Dissipative-Boussinesq}, and the other term is defined via
\begin{align*}
u^{\non}(t,x):=\int_0^t K_1(t-\tau,x)\ast_{(x)}\Delta f(u(\tau,x);p)\mathrm{d}\tau.
\end{align*}
In order to prove the global (in time) existence of solution to the nonlinear dissipative Boussinesq equation \eqref{Eq-NonLinear-Dissipative-Boussinesq}, we shall demonstrate a fixed point of the operator to be the solution, namely, $\Phi [u]\in X_s(T)$ for all $T>0$. For this purpose, it suffices to verify the following crucial estimates:
\begin{align}
\|\Phi[u]\|_{X_s(T)}&\lesssim\|(u_0,u_1)\|_{\ml{A}_s}+\|u\|_{X_s(T)}^p,\label{Est-Crucial-01}\\
\|\Phi[u]-\Phi[\bar{u}]\|_{X_s(T)}&\lesssim\|u-\bar{u}\|_{X_s(T)}\left(\|u\|_{X_s(T)}^{p-1}+\|\bar{u}\|_{X_s(T)}^{p-1}\right),\label{Est-Crucial-02}
\end{align}
uniformly in $T>0$ with the initial data space \eqref{Data-Space}, under some appropriate conditions for $p$ related to $n$ and $s$. In the desire estimate \eqref{Est-Crucial-02}, $u$ and $\bar{u}$ are two solutions to the nonlinear dissipative Boussinesq equation \eqref{Eq-NonLinear-Dissipative-Boussinesq}. As a consequence, a standard application of Banach's fixed point theorem leads to global (in time) existence result of small data Sobolev solution $X_s(\infty)$. Remark that from Theorem \ref{Thm-Linear-Opt-Est}, Corollary \ref{Coro-Linear-Est} as well as Corollary \ref{Coro-Linear-L6}, the estimate $\|u^{\lin}\|_{X_s(T)}\lesssim\|(u_0,u_1)\|_{\ml{A}_s}$ holds. Note that we require $H^{1+\epsilon_1}$ regularity for $u_0$ only when $s\in[3,\infty)$ from Corollary \ref{Coro-Linear-L6}.
 Namely, our aim in \eqref{Est-Crucial-01} turns into 
\begin{align}\label{Est-Crucial-03}
\|u^{\non}\|_{X_s(T)}\lesssim\|u\|_{X_s(T)}^p.
\end{align}

To end this part, we collect some preliminaries from the harmonic analysis to estimate the nonlinear term in homogeneous Sobolev spaces (see also \cite[Appendix C]{Palmieri-2018}).
\begin{lemma}\label{fractionalgagliardonirenbergineq} (Fractional Gagliardo-Nirenberg inequality, \cite{Hajaiej-Molinet-Ozawa-Wang-2011})
	Let $p,p_0,p_1\in(1,\infty)$ and $\kappa\in[0,s)$ with $s\in(0,\infty)$. Then, for all $f\in L^{p_0}\cap \dot{H}^{s}_{p_1}$, the following inequality holds:
	\begin{align*}
		\|f\|_{\dot{H}^{\kappa}_{p}}\lesssim\|f\|_{L^{p_0}}^{1-\beta}\|f\|^{\beta}_{\dot{H}^{s}_{p_1}},
	\end{align*}
	where  $\beta=\left(\frac{1}{p_0}-\frac{1}{p}+\frac{\kappa}{n}\right)\big/\left(\frac{1}{p_0}-\frac{1}{p_1}+\frac{s}{n}\right)$ and $ \beta\in\left[\frac{\kappa}{s},1\right]$.
\end{lemma}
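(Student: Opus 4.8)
The plan is to recognize the three quantities in the inequality as homogeneous Triebel--Lizorkin norms and then to factor the desired bound through one complex interpolation followed by one homogeneous Sobolev embedding. First I would invoke the Littlewood--Paley characterizations, valid for $1<q<\infty$, namely $L^q\simeq\dot{F}^0_{q,2}$ and $\dot{H}^a_q\simeq\dot{F}^a_{q,2}$ with the standard square-function norms. Under these identifications the three norms $\|f\|_{L^{p_0}}$, $\|f\|_{\dot{H}^s_{p_1}}$ and $\|f\|_{\dot{H}^\kappa_p}$ all become Triebel--Lizorkin norms, and it is precisely this reduction that makes the three distinct integrability exponents $p_0,p_1,p$ tractable within a single scale.

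Next I would apply complex interpolation for homogeneous Triebel--Lizorkin spaces. For $\beta\in(0,1)$ one has $[\dot{F}^0_{p_0,2},\dot{F}^s_{p_1,2}]_\beta=\dot{F}^{\beta s}_{\tilde{p},2}$, where $\tfrac{1}{\tilde{p}}=\tfrac{1-\beta}{p_0}+\tfrac{\beta}{p_1}$ and the fine index is again $2$ because $\tfrac{1-\beta}{2}+\tfrac{\beta}{2}=\tfrac12$. The abstract interpolation inequality $\|f\|_{[X_0,X_1]_\beta}\lesssim\|f\|_{X_0}^{1-\beta}\|f\|_{X_1}^{\beta}$ then delivers
\[
\|f\|_{\dot{H}^{\beta s}_{\tilde{p}}}\lesssim\|f\|_{L^{p_0}}^{1-\beta}\|f\|_{\dot{H}^{s}_{p_1}}^{\beta}.
\]
It remains to descend from the intermediate pair $(\beta s,\tilde{p})$ to the target pair $(\kappa,p)$. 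Here the hypothesis $\beta\in[\kappa/s,1]$ guarantees $\beta s\geqslant\kappa$, so there is genuine surplus smoothness to trade, and a direct computation shows that the defining formula for $\beta$ is exactly the scaling identity $\beta s-\tfrac{n}{\tilde{p}}=\kappa-\tfrac{n}{p}$, which forces $\tilde{p}\leqslant p$. Consequently the homogeneous Sobolev embedding $\dot{H}^{\beta s}_{\tilde{p}}\hookrightarrow\dot{H}^\kappa_p$ applies, and composing it with the previous display yields the claimed estimate. The endpoint $\beta=\kappa/s$ gives $\tilde{p}=p$ and the bound is pure interpolation with no embedding step, while $\beta=1$ collapses to $\kappa\leqslant s$ together with the plain Sobolev embedding.

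I expect the main obstacle to be the careful handling of the \emph{homogeneous} function spaces: homogeneous Triebel--Lizorkin spaces are only defined modulo polynomials and require a realization and density argument, and both the interpolation identity and the Sobolev embedding above must be justified in this homogeneous setting, including at the endpoints of the admissible $\beta$-range. The cleanest way to sidestep these technicalities is to run the two estimates directly on dyadic Littlewood--Paley blocks $\Delta_j f$: bound each block both by $\|f\|_{L^{p_0}}$ and by $\|f\|_{\dot{H}^s_{p_1}}$ using Bernstein's inequalities to shift integrability, optimize over the splitting frequency, and sum the resulting geometric series. This reproduces the interpolation-plus-embedding computation while avoiding the abstract machinery, at the cost of a somewhat longer but entirely elementary calculation.
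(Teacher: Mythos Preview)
The paper does not supply its own proof of this lemma; it simply quotes the statement and cites \cite{Hajaiej-Molinet-Ozawa-Wang-2011} as the source, so there is no in-paper argument to compare against.

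Your sketch is a correct and standard route to the inequality. The identification $L^{p_0}\simeq\dot F^0_{p_0,2}$, $\dot H^s_{p_1}\simeq\dot F^s_{p_1,2}$ for $1<p_0,p_1<\infty$, the complex interpolation $[\dot F^0_{p_0,2},\dot F^s_{p_1,2}]_\beta=\dot F^{\beta s}_{\tilde p,2}$, and the subsequent homogeneous Sobolev embedding $\dot H^{\beta s}_{\tilde p}\hookrightarrow\dot H^\kappa_p$ under the scaling relation $\beta s-n/\tilde p=\kappa-n/p$ are all valid, and your verification that this scaling relation is equivalent to the given formula for $\beta$ is correct. The caution you raise about homogeneous spaces modulo polynomials is the only genuine subtlety; your proposed workaround via direct Littlewood--Paley block estimates and Bernstein inequalities is exactly the elementary substitute one uses to avoid that machinery, and it would close the argument cleanly. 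The endpoint cases $\beta=\kappa/s$ and $\beta=1$ are handled as you describe.
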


\begin{lemma}\label{fractionalchainrule} (Fractional chain rule, \cite{Palmieri-Reissig=2018})
	Let  $r,r_1,r_2\in(1,\infty)$ and $p\in(\lceil s\rceil,\infty)$  with $s\in(0,\infty)$ satisfying the relation
	\begin{equation*}
		\frac{1}{r}=\frac{p-1}{r_1}+\frac{1}{r_2}.
	\end{equation*}
	Then, for all $f\in\dot{H}^{s}_{r_2}\cap L^{r_1}$, the following inequality holds:
	\begin{equation*}
	\|\,|f|^p\|_{\dot{H}^{s}_{r}}\lesssim\|f\|_{L^{r_1}}^{p-1}\|f\|_{\dot{H}^{s}_{r_2}}.
	\end{equation*}
\end{lemma}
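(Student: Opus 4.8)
The plan is to prove the estimate by a homogeneous Littlewood--Paley decomposition of the composite function $|f|^p$, which reduces the fractional differentiation to a pointwise-weighted square function amenable to H\"older's inequality and the Hardy--Littlewood maximal inequality. Writing $F(u):=|u|^p$, the first observation is that the hypothesis $p>\lceil s\rceil$ guarantees $F\in\ml{C}^{\lceil s\rceil}$ with $|F^{(k)}(u)|\lesssim|u|^{p-k}$ for $0\le k\le\lceil s\rceil$; this is exactly the regularity needed to differentiate the composition up to the order dictated by $s$, and it is the reason the smoothness threshold appears in the hypotheses.

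First I would introduce the dyadic blocks $\Delta_j$ and the low-frequency cut-offs $S_j$, and exploit the telescoping identity $F(f)=\sum_j\big(F(S_{j+1}f)-F(S_jf)\big)$. By the mean value theorem each summand equals $m_j\,\Delta_j f$ with $m_j:=\int_0^1F'\big(S_jf+\theta\,\Delta_jf\big)\,\mathrm{d}\theta$. Since the kernels are nice one has $|S_jf|,|\Delta_jf|\lesssim Mf$ pointwise, where $M$ is the Hardy--Littlewood maximal operator, and therefore $|m_j|\lesssim(Mf)^{p-1}$ uniformly in $j$.

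The key step is then to invoke the square-function characterization $\||D|^sF(f)\|_{L^r}\simeq\big\|(\sum_j2^{2js}|\Delta_jF(f)|^2)^{1/2}\big\|_{L^r}$ and to establish the weighted bound
\begin{align*}
\||D|^sF(f)\|_{L^r}\lesssim\Big\|(Mf)^{p-1}\Big(\sum_j2^{2js}|\Delta_jf|^2\Big)^{1/2}\Big\|_{L^r}.
\end{align*}
Given this, one applies H\"older's inequality with the prescribed exponent relation $\frac1r=\frac{p-1}{r_1}+\frac1{r_2}$, splitting the right-hand side into $\|(Mf)^{p-1}\|_{L^{r_1/(p-1)}}$ times the square-function norm in $L^{r_2}$. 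The first factor is $\lesssim\|f\|_{L^{r_1}}^{p-1}$ by the maximal inequality (here $r_1>1$ is used), while the second is $\simeq\||D|^sf\|_{L^{r_2}}=\|f\|_{\dot{H}^s_{r_2}}$ by the square-function characterization (here $r_2\in(1,\infty)$ is used), which yields the claimed inequality.

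I expect the main obstacle to be precisely the passage to the weighted square-function bound displayed above. The weight $m_j$ is not frequency-localized at scale $2^j$, so the product $m_j\,\Delta_jf$ leaks into neighbouring and higher frequencies, and one must control these paraproduct remainders rather than pretend $\Delta_jF(f)\approx m_j\Delta_jf$. This is exactly where $p>\lceil s\rceil$ is indispensable: bounding the high-frequency output requires distributing up to $\lceil s\rceil$ derivatives onto $F$ through the Fa\`a di Bruno formula, each derivative costing a factor $|f|^{p-k}$ that must be reabsorbed by H\"older. For the range $s\in(0,1)$ relevant to the present application one may bypass the paraproduct bookkeeping entirely, using instead the difference-square characterization $\||D|^sg\|_{L^r}\simeq\big\|(\int|g(\cdot)-g(\cdot-y)|^2|y|^{-n-2s}\,\mathrm{d}y)^{1/2}\big\|_{L^r}$ together with the elementary pointwise bound $|F(f(x))-F(f(x-y))|\lesssim(|f(x)|^{p-1}+|f(x-y)|^{p-1})\,|f(x)-f(x-y)|$; the term carrying $|f(x)|^{p-1}$ factors out immediately, and the delicate term carrying $|f(x-y)|^{p-1}$ is handled by the Fefferman--Stein inequality in the $L^r(\ell^2)$ setting.
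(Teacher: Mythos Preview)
The paper does not prove this lemma at all: it is stated as a preliminary tool and attributed to \cite{Palmieri-Reissig=2018} (itself building on the Christ--Weinstein and Kato--Ponce lines of argument), with no proof or sketch given. So there is nothing in the paper to compare your argument against.

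That said, your sketch follows the standard route and is essentially correct in outline. The telescoping $F(f)=\sum_j\big(F(S_{j+1}f)-F(S_jf)\big)$ with the mean-value bound $|m_j|\lesssim (Mf)^{p-1}$, followed by H\"older and Fefferman--Stein, is exactly how the fractional chain rule is proved in the references. You have also correctly identified the one genuine subtlety: the product $m_j\,\Delta_jf$ is not frequency-localized, so passing from the telescoping identity to the square-function bound requires controlling the paraproduct remainders, and this is where the regularity $F\in\ml{C}^{\lceil s\rceil}$ (hence $p>\lceil s\rceil$) is actually consumed. Your fallback for $s\in(0,1)$ via the Strichartz-type difference characterization is also standard and avoids that bookkeeping. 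One small point: you wrote ``the range $s\in(0,1)$ relevant to the present application,'' but in the paper the lemma is invoked with $s$ replaced by $s-2>0$ for $s\in(2,\tfrac{n}{2}]$, so the order can exceed $1$ and the full paraproduct analysis is in principle needed; this does not affect the validity of your outline, only the remark about which regime matters here.
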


\begin{lemma}\label{fractionalpowersrule} (Fractional powers rule, \cite{Runst-Sickel=1996})
	Let $r\in(1,\infty)$ and $p\in(1,\infty)$ with $s\in(0,p)$. Then, for all $f\in{\dot{H}^{s}_{r}}\cap L^{\infty}$, the following inequality holds:
	\begin{equation*}
	\|\,|f|^p\|_{\dot{H}^{s}_{r}}\lesssim\|f\|_{\dot{H}^{s}_{r}}\|f\|_{L^{\infty}}^{p-1}.
	\end{equation*}
\end{lemma}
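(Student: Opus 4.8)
The plan is to reduce the inequality to an elementary pointwise Lipschitz bound for the nonlinearity $t\mapsto|t|^p$, combined with a characterization of the seminorm $\|\cdot\|_{\dot{H}^s_r}$ through finite differences. The starting point is that, for $r\in(1,\infty)$, the space $\dot{H}^s_r$ coincides with the Triebel--Lizorkin space $\dot{F}^s_{r,2}$, whose seminorm admits a difference representation. The decisive elementary observation is that, for a bounded function $f$, the map $t\mapsto|t|^p$ is Lipschitz on $[-\|f\|_{L^\infty},\|f\|_{L^\infty}]$ with constant $p\|f\|_{L^\infty}^{p-1}$; in particular, by the mean value theorem,
\begin{align*}
\big|\,|f(x+h)|^p-|f(x)|^p\big|\lesssim\|f\|_{L^\infty}^{p-1}\,|f(x+h)-f(x)|
\end{align*}
for almost every $x,h\in\mb{R}^n$.

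For the low-regularity range $s\in(0,1)$ I would invoke the Gagliardo-type equivalent seminorm
\begin{align*}
\|g\|_{\dot{H}^s_r}\simeq\left\|\left(\int_{\mb{R}^n}\frac{|g(\cdot+h)-g(\cdot)|^2}{|h|^{n+2s}}\,\mathrm{d}h\right)^{1/2}\right\|_{L^r}
\end{align*}
applied to $g=|f|^p$. Substituting the pointwise Lipschitz bound inside the integral and extracting the constant factor $\|f\|_{L^\infty}^{p-1}$ yields at once
\begin{align*}
\big\|\,|f|^p\big\|_{\dot{H}^s_r}\lesssim\|f\|_{L^\infty}^{p-1}\,\|f\|_{\dot{H}^s_r},
\end{align*}
which is precisely the asserted estimate in this range. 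This is the $L^\infty$ endpoint counterpart of the fractional chain rule in Lemma \ref{fractionalchainrule}, which requires $r_1\in(1,\infty)$ and hence cannot be applied directly here.

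For $s\geqslant 1$ I would lower the order of smoothness one unit at a time. Differentiating formally, $\nabla(|f|^p)=p\,|f|^{p-2}f\,\nabla f$, and applying the fractional Leibniz (Kato--Ponce) rule together with a composition estimate for the smoother nonlinearity $u\mapsto|u|^{p-2}u$ reduces the bound at level $s$ to the one at level $s-1$, the $L^\infty$ norm of $f$ absorbing the extra power at each stage; one iterates until the remaining order falls into $(0,1)$, where the previous step closes the argument. The main obstacle is the bookkeeping of the regularity of $|u|^p$ at the origin: for non-integer $p$ this function is only finitely smooth, so one must ensure that no more than $s$ derivatives are ever demanded. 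This is exactly where the hypothesis $s<p$ enters — since $|u|^p$ vanishes to order $p$ at $u=0$, one may work with $M$-th order differences for an integer $M$ with $s<M\leqslant p$ in place of first differences, and the resulting higher-order difference characterization of $\dot{F}^s_{r,2}$ keeps the Lipschitz-type control valid throughout, completing the proof.
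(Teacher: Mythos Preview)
The paper does not prove this lemma; it is stated with a citation to Runst--Sickel and used as a black box. There is therefore no paper proof to compare against, and your write-up is an independent sketch of why the inequality holds.

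Your argument for $s\in(0,1)$ is clean and correct: the first-difference characterization of $\dot F^s_{r,2}=\dot H^s_r$ together with the elementary bound $\big||a|^p-|b|^p\big|\lesssim(|a|+|b|)^{p-1}|a-b|$ gives the result directly. For $s\geqslant 1$ your two suggested routes are both standard in spirit, but neither is quite as written. In the differentiation-plus-Leibniz route, after one step you must control $\||f|^{p-2}f\|_{\dot H^{s-1}_{r}}$, i.e.\ a composition with a nonlinearity of regularity order $p-1$ rather than $p$; the induction hypothesis then needs $s-1<p-1$, which is fine, but you also need the Leibniz rule with one factor in $L^\infty$ (your Lemma~\ref{coroleibunizpower} allows $p_2=q_1=\infty$), and you should say this explicitly. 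In the higher-order difference route, requiring an \emph{integer} $M$ with $s<M\leqslant p$ can fail (take $s=1.4$, $p=1.6$); the correct statement is to take any integer $M>s$ and use that $t\mapsto|t|^p$ lies in $C^{\lfloor p\rfloor}$ with $G^{(\lfloor p\rfloor)}$ H\"older of order $p-\lfloor p\rfloor$, which is enough to bound $\Delta_h^M(|f|^p)$ since $s<p$. With that correction the outline matches the treatment in Runst--Sickel.
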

\begin{lemma}\label{coroleibunizpower}(Fractional Leibniz rule, \cite{Runst-Sickel=1996}) Let $r\in(1,\infty)$ and $p_1,p_2,q_1,q_2\in(1,\infty]$ with $s\in(0,\infty)$ satisfying the relation
\begin{align*}
\frac{1}{r}=\frac{1}{p_1}+\frac{1}{p_2}=\frac{1}{q_1}+\frac{1}{q_2}.
\end{align*}	
	Then, for all $f\in\dot{H}^{s}_{p_1}\cap L^{q_1}$ and $g\in\dot{H}^{s}_{q_2}\cap L^{p_2}$, the following inequality holds:
	\begin{equation*}
	\|fg\|_{\dot{H}^{s}_{r}}\lesssim \|f\|_{\dot{H}^{s}_{p_1}}\|g\|_{L^{p_2}}+\|f\|_{L^{q_1}}\|g\|_{\dot{H}^{s}_{q_2}}.
	\end{equation*}
\end{lemma}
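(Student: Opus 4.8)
Since this is a classical product estimate, the paper simply cites \cite{Runst-Sickel=1996}; nevertheless, the plan for a self-contained argument is to combine Bony's paraproduct decomposition with the Littlewood--Paley characterization of the homogeneous space $\dot{H}^{s}_{r}$ as the Triebel--Lizorkin space $\dot{F}^{s}_{r,2}$, valid for $1<r<\infty$. First I would fix a homogeneous dyadic partition $\{\Delta_j\}_{j\in\mb{Z}}$ with low-frequency truncations $S_j:=\sum_{k<j}\Delta_k$ and recall the norm equivalence $\|h\|_{\dot{H}^{s}_{r}}\simeq\big\|(\sum_j 2^{2js}|\Delta_j h|^2)^{1/2}\big\|_{L^r}$. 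Then I would split the product through Bony's formula $fg=T_fg+T_gf+R(f,g)$, where $T_fg:=\sum_k S_{k-1}f\,\Delta_k g$ is the low--high paraproduct, $T_gf:=\sum_j S_{j-1}g\,\Delta_j f$ is the high--low paraproduct, and $R(f,g):=\sum_{|j-k|\leqslant1}\Delta_j f\,\Delta_k g$ collects the resonant interactions.

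The two paraproduct terms are the easy ones. For $T_gf$ the spectrum of each summand sits in a fixed annulus of radius $\sim 2^j$, so only the block $\Delta_j f$ contributes at the $j$-th frequency scale; using the pointwise bound $|S_{j-1}g|\lesssim \ml{M}g$ by the Hardy--Littlewood maximal function $\ml{M}$, H\"older's inequality with $\frac1r=\frac1{p_1}+\frac1{p_2}$, and the Fefferman--Stein vector-valued maximal inequality to pass from $\big\|(\sum_j 2^{2js}|\ml{M}g\cdot\Delta_j f|^2)^{1/2}\big\|_{L^r}$ back to the square function of $f$, I would obtain $\|T_gf\|_{\dot{H}^{s}_{r}}\lesssim\|f\|_{\dot{H}^{s}_{p_1}}\|g\|_{L^{p_2}}$. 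By the symmetric roles of the two factors, the same scheme with $\frac1r=\frac1{q_1}+\frac1{q_2}$ gives $\|T_fg\|_{\dot{H}^{s}_{r}}\lesssim\|f\|_{L^{q_1}}\|g\|_{\dot{H}^{s}_{q_2}}$, where now the $s$-derivative is carried by $g$.

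The main obstacle is the resonant remainder $R(f,g)$, because each product $\Delta_j f\,\Delta_k g$ with $|j-k|\leqslant1$ has frequency support in a ball rather than an annulus, so its contribution spreads over all lower frequency scales and the square function cannot be read off block by block. Here I would exploit the hypothesis $s>0$: writing $2^{\ell s}\Delta_\ell R(f,g)$ and summing in $\ell$, the factor $2^{(\ell-j)s}$ decays for $\ell>j$ and the resulting geometric series converges, which lets me transfer the full $s$-derivative onto one of the two factors and estimate exactly as in the paraproduct case, producing a bound of the form $\|f\|_{\dot{H}^{s}_{p_1}}\|g\|_{L^{p_2}}$ (or, symmetrically, $\|f\|_{L^{q_1}}\|g\|_{\dot{H}^{s}_{q_2}}$). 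Collecting the three contributions and choosing the cheaper of the two splittings yields the stated inequality. The conditions $1<r,p_i,q_i<\infty$ enter precisely to guarantee boundedness of $\ml{M}$ and applicability of the Fefferman--Stein inequality, and for the underlying Besov/Triebel--Lizorkin machinery I would ultimately appeal to \cite{Runst-Sickel=1996}.
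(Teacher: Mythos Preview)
Your reading is correct: the paper does not prove this lemma at all but merely quotes it from \cite{Runst-Sickel=1996}, so there is no ``paper's proof'' to compare against. The paraproduct route you sketch is exactly the standard argument behind the Kato--Ponce/Leibniz estimate and is the one underlying the cited reference, so in spirit you are reproducing the literature proof rather than offering an alternative.

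Two small points of precision. First, the statement allows $p_1,p_2,q_1,q_2\in(1,\infty]$, not only $(1,\infty)$; your closing sentence restricts to finite exponents, but the endpoint $\infty$ for the undifferentiated factor is harmless since $|S_{j-1}g|\leqslant\|g\|_{L^\infty}$ directly (and $\ml{M}$ is bounded on $L^\infty$), so no modification is needed there. Second, in the remainder estimate the spectral support of $\Delta_jf\,\Delta_kg$ with $|j-k|\leqslant1$ lies in a \emph{ball} of radius $\sim 2^j$, so $\Delta_\ell$ sees it only for $\ell\leqslant j+O(1)$; the geometric factor $2^{(\ell-j)s}$ therefore decays as $j\to\infty$ with $\ell$ fixed (equivalently for $\ell<j$), which is the inequality you need---your phrase ``decays for $\ell>j$'' has the inequality reversed. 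With these adjustments your outline is sound.
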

\begin{lemma}\label{fractionembedd} (Fractional Sobolev embedding, \cite{Dabbicco-Ebert-Lucente-2017}) Let $0<2s^*<n<2s$. Then, for all $f\in\dot{H}^{s^*}\cap\dot{H}^s$, the following inequality holds:
	\begin{equation*}
		\|f\|_{L^{\infty}}\lesssim\|f\|_{\dot{H}^{s^*}}+\|f\|_{\dot{H}^s}.
	\end{equation*}
\end{lemma}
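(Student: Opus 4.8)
The plan is to reduce the $L^{\infty}$ bound to an $L^1$ estimate on the Fourier side and then exploit the two frequency scales separately. First I would invoke the Fourier inversion formula together with the elementary bound $\|f\|_{L^{\infty}}\lesssim\|\widehat{f}\,\|_{L^1}$, so that everything reduces to controlling $\int_{\mb{R}^n}|\widehat{f}(\xi)|\,\mathrm{d}\xi$. Splitting this integral at the threshold $|\xi|=1$ isolates the low- and high-frequency contributions, each of which will be handled by a Cauchy--Schwarz argument calibrated to a different homogeneous norm.

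For the low frequencies $|\xi|\leqslant 1$, I would write $|\widehat{f}|=|\xi|^{-s^*}\bigl(|\xi|^{s^*}|\widehat{f}|\bigr)$ and apply Cauchy--Schwarz, bounding the contribution by the product of $\bigl(\int_{|\xi|\leqslant 1}|\xi|^{-2s^*}\mathrm{d}\xi\bigr)^{1/2}$ and $\|f\|_{\dot{H}^{s^*}}$. Passing to polar coordinates, the first factor is finite precisely because $2s^*<n$, which guarantees that the radial integrand $r^{n-1-2s^*}$ is integrable near $r=0$. Symmetrically, for the high frequencies $|\xi|\geqslant 1$ I would factor $|\widehat{f}|=|\xi|^{-s}\bigl(|\xi|^{s}|\widehat{f}|\bigr)$; Cauchy--Schwarz then bounds the contribution by $\bigl(\int_{|\xi|\geqslant 1}|\xi|^{-2s}\mathrm{d}\xi\bigr)^{1/2}\|f\|_{\dot{H}^{s}}$, and the radial integral $\int_1^{\infty}r^{n-1-2s}\,\mathrm{d}r$ converges exactly because $n<2s$. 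Adding the two pieces yields $\|\widehat{f}\,\|_{L^1}\lesssim\|f\|_{\dot{H}^{s^*}}+\|f\|_{\dot{H}^{s}}$, which is the claim.

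The argument is essentially routine, and there is no genuine obstacle; the entire content of the hypothesis $0<2s^*<n<2s$ is to force integrability of the weight $|\xi|^{-2s^*}$ at the origin and of $|\xi|^{-2s}$ at infinity simultaneously. The only point requiring a little care is that the splitting threshold $|\xi|=1$ is arbitrary while the two conditions themselves cannot be relaxed: if $2s^*=n$ the low-frequency weight produces a logarithmic divergence at the origin, and if $2s=n$ the high-frequency weight does the same at infinity, so both strict inequalities are sharp for this Cauchy--Schwarz approach. In the application to the nonlinear problem this lemma will be used to pass from the $\dot{H}^{s^*}\cap\dot{H}^s$ control supplied by Theorem \ref{Thm-Linear-Opt-Est} and Corollary \ref{Coro-Linear-Est} to the $L^{\infty}$ estimates needed to close the contraction in the limit case $n=2$ with $p=4$.
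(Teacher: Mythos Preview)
Your argument is correct and is in fact the standard elementary proof of this embedding: Hausdorff--Young gives $\|f\|_{L^{\infty}}\lesssim\|\widehat{f}\,\|_{L^{1}}$, and the Cauchy--Schwarz splitting at $|\xi|=1$ converts the $L^1$ norm of $\widehat{f}$ into the sum of the two homogeneous Sobolev norms, with the integrability of the weights $|\xi|^{-2s^*}$ near the origin and $|\xi|^{-2s}$ at infinity secured exactly by the strict inequalities $2s^*<n<2s$.

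The paper does not supply its own proof of this lemma; it simply quotes it from \cite{Dabbicco-Ebert-Lucente-2017}. So there is nothing to compare on the level of argument---you have filled in what the paper leaves as a citation. One small inaccuracy in your closing remark: in the paper this lemma is invoked in the large-regularity regime $s\in(\frac{n}{2},\infty)$ (see the derivation of \eqref{Est-Hs-02} and the Lipschitz estimate there), not specifically in the limit case $n=2$, $p=4$; in that special case the $L^{\infty}$ control is obtained directly from Corollary~\ref{Coro-Linear-L6} rather than through this embedding.
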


\subsection{Some estimates for the nonlinear term}
$\ \ \ \ $We next will derive some priori estimates for the power type nonlinearity $f(u(\tau,\cdot);p)$ in the $L^1$ and $\dot{H}^{(s-2)_+}$ spaces, respectively, by using the norm of evolution spaces $X_s(\tau)$. Let us denote $0<\varepsilon_1\ll 1$ when $n=2$ and $\varepsilon_1=0$ when $n\geqslant 3$ so that
\begin{align*}
\ml{D}_n(1+\tau)\lesssim (1+\tau)^{-\frac{n-2}{8}+\varepsilon_1}\ \ \mbox{for}\ \ n\geqslant 2.
\end{align*}

First of all, let us apply the fractional Gagliardo-Nirenberg inequality to get
\begin{align}\label{Est-Lm-Gag-Nir}
\|f(u(\tau,\cdot);p)\|_{L^m}&\lesssim\|u(\tau,\cdot)\|_{L^2}^{p-\frac{n}{s}(\frac{p}{2}-\frac{1}{m})}\|u(\tau,\cdot)\|_{\dot{H}^s}^{\frac{n}{s}(\frac{p}{2}-\frac{1}{m})}\notag\\
&\lesssim (1+\tau)^{-\frac{n-1}{4}p+\frac{n}{4m}+\varepsilon_1(\frac{2s-n}{2s}p+\frac{n}{sm})}\|u\|_{X_s(\tau)}^p,
\end{align}
where we restricted $\frac{2}{m}\leqslant p\leqslant\frac{2n}{m(n-2s)_+}$ with $m=1,2$. Note that $\dot{H}^{(s-2)_+}=L^2$ when $s\leqslant 2$.

We now turn to the case with $s>2$. For one thing, when $p>\lceil s-2\rceil$, by employing the fractional chain rule and the fractional Gagliardo-Nirenberg inequality, we arrive at
\begin{align}
\|f(u(\tau,\cdot);p)\|_{\dot{H}^{s-2}}&\lesssim\|u(\tau,\cdot)\|_{L^{r_1}}^{p-1}\|u(\tau,\cdot)\|_{\dot{H}^{s-2}_{r_2}}\notag\\
&\lesssim \|u(\tau,\cdot)\|_{L^2}^{\frac{2s-n}{2s}(p-1)+\frac{2}{s}}\|u(\tau,\cdot)\|_{\dot{H}^s}^{\frac{n}{2s}(p-1)+\frac{s-2}{s}}\notag\\
&\lesssim (1+\tau)^{-\frac{n-1}{4}p+\frac{n-2s+4}{8}+\varepsilon_1(\frac{2s-n}{2s}p+\frac{n-2s+4}{2s})}\|u\|_{X_s(\tau)}^p,\label{Est-Hs-01}
\end{align}
in which $\frac{p-1}{r_1}+\frac{1}{r_2}=\frac{1}{2}$ with $\frac{n}{s}(\frac{1}{2}-\frac{1}{r_1})\in[0,1]$ and $\frac{n}{s}(\frac{1}{2}-\frac{1}{r_2}+\frac{s-2}{n})\in[0,1]$. Hence, by straightforward computations, there exist two parameters $r_1,r_2\in(1,\infty)$ satisfying the last restrictions and it leads to $	1-\frac{2s-4}{n}\leqslant p \leqslant\frac{n-2s+4}{(n-2s)_+}$. For another, when $p>s-2$ with $2s>n$ additionally, we combine the fractional powers rule and the fractional Sobolev embedding to show
\begin{align*}
\|f(u(\tau,\cdot);p)\|_{\dot{H}^{s-2}}&\lesssim \|u(\tau,\cdot)\|_{\dot{H}^{s-2}}\|u(\tau,\cdot)\|_{L^{\infty}}^{p-1}\\
&\lesssim \|u(\tau,\cdot)\|_{L^2}^{\frac{2}{s}}\|u(\tau,\cdot)\|_{\dot{H}^s}^{\frac{s-2}{s}}\left(\|u(\tau,\cdot)\|_{\dot{H}^{s^*}}^{p-1}+\|u(\tau,\cdot)\|_{\dot{H}^s}^{p-1}\right)
\end{align*}
with $0<2s^*<n$. Again via the fractional Gagliardo-Nirenberg inequality, it results
\begin{align}
\|f(u(\tau,\cdot);p)\|_{\dot{H}^{s-2}}&\lesssim(1+\tau)^{-\frac{(2s^*+n-2)p+2s-4-2s^*}{8}+\varepsilon_1(\frac{s-s^*}{s}p+\frac{2-s+s^*}{s})}\|u\|_{X_s(\tau)}^p\notag\\
&\lesssim(1+\tau)^{-\frac{(2n-2-\varepsilon_2)p+2s-4-n+\varepsilon_2}{8}+\varepsilon_1(\frac{2s-n+\varepsilon_2}{2s}p+\frac{4-2s+n-\varepsilon_2}{2s})}\|u\|_{X_s(\tau)}^p\label{Est-Hs-02}
\end{align}
with a sufficiently small constant $0<\varepsilon_2\ll 1$ by choosing $s^*$ such that $2s^*=n-\varepsilon_2$.

To overcome some difficulties in the limit case $n=2$ with $p=4$, we suggest some estimates for the nonlinearity. For one thing, via the classical interpolation and the chain rule, we know
\begin{align}\label{EST-01}
\|\,|D|f(u(\tau,\cdot);4)\|_{L^1}+\|f(u(\tau,\cdot);4)\|_{L^2}&\lesssim\|u(\tau,\cdot)\|_{\dot{H}^1}\|u(\tau,\cdot)\|_{L^6}^3\lesssim (1+\tau)^{-\frac{3}{4}}\|u\|_{X_s(\tau)}^4.
\end{align}
For another, concerning $s\in[3,\infty)$, one has
\begin{align*}
\|f(u(\tau,\cdot);4)\|_{\dot{H}^{s-1}}&\lesssim\|u(\tau,\cdot)\|_{L^{\infty}}^3\|u(\tau,\cdot)\|_{\dot{H}^{s-1}}\\
&\lesssim\|u(\tau,\cdot)\|_{L^{\infty}}^3\|u(\tau,\cdot)\|_{\dot{H}^1}^{\frac{1}{s-1}}\|u(\tau,\cdot)\|_{\dot{H}^s}^{\frac{s-2}{s-1}}\\
&\lesssim (1+\tau)^{-\frac{1}{2}-\frac{s}{4}}\|u\|_{X_s(\tau)}^4,
\end{align*}
and similarly,
\begin{align*}
\|f(u(\tau,\cdot);4)\|_{\dot{H}^{s-2}}\lesssim (1+\tau)^{-\frac{1}{4}-\frac{s}{4}}\|u\|_{X_s(\tau)}^4.
\end{align*}

\subsection{Proof of Theorem \ref{Thm-GESDS} with different regularities}
\subsubsection{Global (in time) solution with lower regularity data: $s\in[1,2]$}
$\ \ \ \ $Let us start the proof of \eqref{Est-Crucial-03} excluding the case $n=2$ with $p=4$ by applying the derived $(L^2\cap L^1)-L^2$ type estimate in $[0,t/2]$ and $L^2-L^2$ type estimate in $[t/2,t]$ from Corollary \ref{Coro-Linear-Est} carrying $s=0$, namely,
\begin{align}
\|u^{\non}(t,\cdot)\|_{L^2}&=\left\|\int_0^t\Delta K_1(t-\tau,\cdot)\ast_{(x)}f(u(\tau,\cdot);p)\mathrm{d}\tau\right\|_{L^2}\notag\\
&\lesssim \int_0^{t/2}(1+t-\tau)^{-\frac{n+2}{8}}\|f(u(\tau,\cdot);p)\|_{L^2\cap L^1}\mathrm{d}\tau+\int_{t/2}^t(1+t-\tau)^{-\frac{1}{4}}\|f(u(\tau,\cdot);p)\|_{L^2}\mathrm{d}\tau\notag\\
&\lesssim (1+t)^{-\frac{n+2}{8}}\int_0^{t/2}(1+\tau)^{-\frac{n-1}{4}p+\frac{n}{4}+\varepsilon_1(\frac{2s-n}{2s}p+\frac{n}{s})}\mathrm{d}\tau\|u\|_{X_s(T)}^p\notag\\
&\quad+(1+t)^{-\frac{n-1}{4}p+\frac{n}{8}+\varepsilon_1(\frac{2s-n}{2s}p+\frac{n}{2s})}\int_{t/2}^t(1+t-\tau)^{-\frac{1}{4}}\mathrm{d}\tau\|u\|_{X_s(T)}^p,\label{Est-4}
\end{align}
where we used \eqref{Est-Lm-Gag-Nir} with $m=1,2$ associated with the condition $2\leqslant p\leqslant\frac{n}{(n-2s)_+}$. Note that $\|u\|_{X_s(\tau)}\leqslant\|u\|_{X_s(T)}$ for all $\tau\in[0,T]$. Here, the asymptotics $(1+t-\tau)\approx (1+t)$ when $\tau\in[0,t/2]$ and $(1+\tau)\approx (1+t)$ when $\tau\in[t/2,t]$ were considered. Thanks to our crucial assumptions $p>4$ when $n=2$ and $p\geqslant\frac{n+2}{n-1}$ when $n\geqslant 3$, we are able to claim
\begin{align}\label{Est-Sub-01}
[\ml{D}_n(1+t)]^{-1}\|u^{\non}(t,\cdot)\|_{L^2}\lesssim \|u\|_{X_s(T)}^p.
\end{align}
Similarly, under the same restriction on the exponent $p$, we may obtain
\begin{align*}
(1+t)^{\frac{2(s-1)+n}{8}}\|u^{\non}(t,\cdot)\|_{\dot{H}^s}&\lesssim(1+t)^{\frac{2(s-1)+n}{8}}\int_0^{t/2}(1+t-\tau)^{-\frac{2(s+1)+n}{8}}\|f(u(\tau,\cdot);p)\|_{L^2\cap L^1}\mathrm{d}\tau\\
&\quad+(1+t)^{\frac{2(s-1)+n}{8}}\int_{t/2}^t(1+t-\tau)^{-\frac{s+1}{4}}\|f(u(\tau,\cdot);p)\|_{L^2}\mathrm{d}\tau\\
&\lesssim \|u\|_{X_s(T)}^p,
\end{align*}
where the inequality \eqref{Est-Lm-Gag-Nir} with $m=1,2$ was employed again. Summarizing the last two estimates, we derive the desire estimate \eqref{Est-Crucial-03} when $s\in[1,2]$.

Let us turn to the limit case $n=2$ with $p=4$. An application of the derived estimates in Corollary \ref{Coro-New} and \eqref{EST-01} shows
\begin{align*}
\|u^{\non}(t,\cdot)\|_{L^2}&=\left\|\int_0^t|D|K_1(t-\tau,\cdot)\ast_{(x)}|D|f(u(\tau,\cdot);4)\mathrm{d}\tau\right\|_{L^2}\\
&\lesssim\int_0^t(1+t-\tau)^{-\frac{1}{4}}\|\,|D|f(u(\tau,\cdot);4)\|_{L^1}\mathrm{d}\tau+\int_0^t\mathrm{e}^{-c(t-\tau)}\|f(u(\tau,\cdot);4)\|_{L^2}\mathrm{d}\tau\\
&\lesssim\int_0^t(1+t-\tau)^{-\frac{1}{4}}(1+\tau)^{-\frac{3}{4}}\mathrm{d}\tau\|u\|_{X_s(T)}^4+\int_0^t\mathrm{e}^{-c(t-\tau)}(1+\tau)^{-\frac{3}{4}}\mathrm{d}\tau\|u\|_{X_s(T)}^4\\
&\lesssim\|u\|_{X_s(T)}^4\lesssim\ml{D}_2(1+t)\|u\|_{X_s(T)}^4.
\end{align*}
Via an analogous way, due to $s+1\in[2,3]$, one gets
\begin{align*}
&(1+t)^{\frac{1}{6}}\|u^{\non}(t,\cdot)\|_{L^6}+(1+t)^{\frac{1}{4}}\|u^{\non}(t,\cdot)\|_{\dot{H}^1}+(1+t)^{\frac{s}{4}}\|u^{\non}(t,\cdot)\|_{\dot{H}^s}\\
&\qquad\lesssim\left((1+t)^{\frac{1}{6}}\int_0^t(1+t-\tau)^{-\frac{5}{12}}(1+\tau)^{-\frac{3}{4}}\mathrm{d}\tau+(1+t)^{\frac{1}{4}}\int_0^t(1+t-\tau)^{-\frac{1}{2}}(1+\tau)^{-\frac{3}{4}}\mathrm{d}\tau\right.\\
&\qquad\qquad\ \left.+(1+t)^{\frac{s}{4}}\int_0^t(1+t-\tau)^{-\frac{s+1}{4}}(1+\tau)^{-\frac{3}{4}}\mathrm{d}\tau+(1+t)^{\frac{s}{4}}\int_0^t\mathrm{e}^{-c(t-\tau)}(1+\tau)^{-\frac{3}{4}}\mathrm{d}\tau\right)\|u\|_{X_s(T)}^4\\
&\qquad\lesssim\|u\|_{X_s(T)}^4.
\end{align*}
Summing up the last estimates, we conclude the desired estimate \eqref{Est-Crucial-03} when $n=2$ with $p=4$.

In order to demonstrate \eqref{Est-Crucial-02}, we notice that
\begin{align*}
\|\Phi[u]-\Phi[\bar{u}]\|_{X_s(T)}=\left\|\int_0^t\Delta K_1(t-\tau,\cdot)\ast_{(x)}[f(u(\tau,\cdot);p)-f(\bar{u}(\tau,\cdot);p)]\mathrm{d}\tau\right\|_{X_s(T)}.
\end{align*}
Thanks to H\"older's inequality, the source error term can be controlled by
\begin{align*}
\|f(u(\tau,\cdot);p)-f(\bar{u}(\tau,\cdot);p)\|_{L^m}\lesssim \|u(\tau,\cdot)-\bar{u}(\tau,\cdot)\|_{L^{mp}}\left(\|u(\tau,\cdot)\|_{L^{mp}}^{p-1}+\|\bar{u}(\tau,\cdot)\|_{L^{mp}}^{p-1}\right)
\end{align*}
with $m=1,2$. Finally, employing the fractional Gagliardo-Nirenberg inequality as those in \eqref{Est-Lm-Gag-Nir}, we can complete the estimate \eqref{Est-Crucial-02}. Therefore, our proof is finished when $s\in[1,2]$.

\subsubsection{Global (in time) solution with suitable regularity data: $s\in(2,\frac{n}{2}]$}
$\ \ \ \ $Due to the failure of the Sobolev embedding $ H^s\hookrightarrow L^{\infty}$ when $s\leqslant \frac{n}{2}$, we turn to the intermediate case $s\in(2,\frac{n}{2}]$. Remark that we just consider the situation $n\geqslant 5$ to guarantee the non-empty set of $s$. It allows us to employ the fractional chain rule to estimate the nonlinearity in the homogeneous Sobolev spaces. 

Concerning $u^{\non}(t,\cdot)$ in the $L^2$ norm, the estimate \eqref{Est-Sub-01} still holds with the same deduction as the one for $s\in[1,2]$, where we required $\max\{2,\frac{n+2}{n-1}\}\leqslant p\leqslant \frac{n}{(n-2s)_+}$ when $n\geqslant 5$. For another, concerning the higher regularity, by  using Corollary \ref{Coro-Linear-Est}, \eqref{Est-Lm-Gag-Nir} with $m=1$ and \eqref{Est-Hs-01}, we may derive
\begin{align}\label{Est-5}
\|u^{\non}(t,\cdot)\|_{\dot{H}^s}&\lesssim\int_0^{t/2}(1+t-\tau)^{-\frac{n+2+2s}{8}}\|f(u(\tau,\cdot);p)\|_{\dot{H}^{s-2}\cap L^1}\mathrm{d}\tau\notag\\
&\quad+\int_{t/2}^t(1+t-\tau)^{-\frac{3}{4}}\|f(u(\tau,\cdot);p)\|_{\dot{H}^{s-2}}\mathrm{d}\tau\notag\\
&\lesssim (1+t)^{-\frac{n+2+2s}{8}}\int_0^{t/2}(1+\tau)^{-\frac{n-1}{4}p+\frac{n}{4}}\mathrm{d}\tau\|u\|_{X_s(T)}^p\notag\\
&\quad+(1+t)^{-\frac{n-1}{4}p+\frac{n-2s+4}{8}}\int_{t/2}^t(1+t-\tau)^{-\frac{3}{4}}\mathrm{d}\tau\|u\|_{X_s(T)}^p,
\end{align}
where we restricted $2\leqslant p\leqslant \frac{n-2s+4}{(n-2s)_+}$ as well as $p>\lceil s-2\rceil$. Then, since $n\geqslant 5$, it gives
\begin{align}\label{Est-0}
(1+t)^{\frac{2(s-1)+n}{8}}\|u^{\non}(t,\cdot)\|_{\dot{H}^s}\lesssim \|u\|_{X_s(T)}^p,
\end{align}
and we already obtained the aim estimate \eqref{Est-Crucial-03} when $s\in(2,\frac{n}{2}]$.

Our next step is to derive the Lipschitz condition for the uniqueness. Clearly, from H\"older's inequality and the fractional Gagliardo-Nirenberg inequality, the following estimate holds:
\begin{align*}
[\ml{D}_n(1+t)]^{-1}\|u^{\non}(t,\cdot)-\bar{u}^{\non}(t,\cdot)\|_{L^2}\lesssim\|u-\bar{u}\|_{X_s(T)}\left(\|u\|_{X_s(T)}^{p-1}+\|\bar{u}\|_{X_s(T)}^{p-1}\right).
\end{align*}
Actually, the subsequent key part is to estimate the source error term in the homogeneous Sobolev space $\dot{H}^{s-2}$, in other words,
\begin{align*}
&(1+t)^{\frac{2(s-1)+n}{8}}\|u^{\non}(t,\cdot)-\bar{u}^{\non}(t,\cdot)\|_{\dot{H}^s}\\
&\qquad\lesssim (1+t)^{-\frac{1}{2}}\int_0^{t/2}\|f(u(\tau,\cdot);p)-f(\bar{u}(\tau,\cdot);p)\|_{\dot{H}^{s-2}\cap L^1}\mathrm{d}\tau\\
&\qquad\quad+(1+t)^{\frac{2(s-1)+n}{8}}\int_{t/2}^t(1+t-\tau)^{-\frac{3}{4}}\|f(u(\tau,\cdot);p)-f(\bar{u}(\tau,\cdot);p)\|_{\dot{H}^{s-2}}\mathrm{d}\tau.
\end{align*}
We set $G(u):=|u|^{p-2}u$ and apply the fact
\begin{align}\label{Est-1}
|f(u(\tau,x);p)-f(\bar{u}(\tau,x);p)|&\approx \big||u(\tau,x)|^p-|\bar{u}(\tau,x)|^p\big|\notag\\
&\lesssim|u(\tau,x)-\bar{u}(\tau,x)|\int_0^1\big|G\big(\gamma u(\tau,x)+(1-\gamma)\bar{u}(\tau,x)\big)\big|\mathrm{d}\gamma.
\end{align}
As a consequence, one has
\begin{align*}
\|f(u(\tau,\cdot);p)-f(\bar{u}(\tau,\cdot);p)\|_{\dot{H}^{s-2}}&\lesssim\|u(\tau,\cdot)-\bar{u}(\tau,\cdot)\|_{\dot{H}^{s-2}_{r_3}}\int_0^1\big\|G\big(\gamma u(\tau,\cdot)+(1-\gamma)\bar{u}(\tau,\cdot)\big)\big\|_{L^{r_4}}\mathrm{d}\gamma\\
&\quad+\|u(\tau,\cdot)-\bar{u}(\tau,\cdot)\|_{L^{r_5}}\int_0^1\big\|G\big(\gamma u(\tau,\cdot)+(1-\gamma)\bar{u}(\tau,\cdot)\big)\big\|_{\dot{H}^{s-2}_{r_6}}\mathrm{d}\gamma,
\end{align*}
where we applied the fractional Leibniz rule with $\frac{1}{r_3}+\frac{1}{r_4}=\frac{1}{r_5}+\frac{1}{r_6}=\frac{1}{2}$. Note that
\begin{align*}
\int_0^1\big\|G\big(\gamma u(\tau,\cdot)+(1-\gamma)\bar{u}(\tau,\cdot)\big)\big\|_{L^{r_4}}\mathrm{d}\gamma\lesssim \|u(\tau,\cdot)\|_{L^{r_4(p-1)}}^{p-1}+\|\bar{u}(\tau,\cdot)\|_{L^{r_4(p-1)}}^{p-1}.
\end{align*}
For another term, taking $\frac{1}{r_6}=\frac{p-2}{r_7}+\frac{1}{r_8}$ in the fractional chain rule, we may arrive at
\begin{align*}
\big\|G\big(\gamma u(\tau,\cdot)+(1-\gamma)\bar{u}(\tau,\cdot)\big)\big\|_{\dot{H}^{s-2}_{r_6}}&\lesssim\|\gamma u(\tau,\cdot)+(1-\gamma)\bar{u}(\tau,\cdot)\|_{L^{r_7}}^{p-2}\|\gamma u(\tau,\cdot)+(1-\gamma)\bar{u}(\tau,\cdot)\|_{\dot{H}^{s-2}_{r_8}}
\end{align*}
carrying the additional restriction $p>1+\lceil s-2\rceil$. By lengthy but straightforward computations with the fractional Gagliardo-Nirenberg inequality, we can derive 
\begin{align}\label{Est-3}
	(1+t)^{\frac{2(s-1)+n}{8}}\|u^{\non}(t,\cdot)-\bar{u}^{\non}(t,\cdot)\|_{\dot{H}^s}\lesssim\|u-\bar{u}\|_{X_s(T)}\left(\|u\|_{X_s(T)}^{p-1}+\|\bar{u}\|_{X_s(T)}^{p-1}\right)
\end{align}
when $p>1+\lceil s-2\rceil$. Then, the estimate \eqref{Est-Crucial-02} is completed as $s\in(2,\frac{n}{2}]$. Collecting the considered restrictions on the exponent $p$ when $s\in(2,\frac{n}{2}]$ as well as $n\geqslant 5$ such that
\begin{align*}
2=\max\left\{2,\frac{n+2}{n-1}\right\}\leqslant p\leqslant \frac{n}{(n-2s)_+}, \ \ 1<p\leqslant\frac{n-2s+4}{(n-2s)_+},\ \ \mbox{and}\ \ p>\lceil s-2\rceil+1,
\end{align*}
we conclude the final restriction $\lceil s-2\rceil+1<p\leqslant\frac{n-2s+4}{(n-2s)_+}$.

\subsubsection{Global (in time) solution with large regularity data: $s\in(\frac{n}{2},\infty)$}
$\ \ \ \ $In the large regularity situation, we may apply the fractional Sobolev embedding. Let us firstly start the proof excluding the case $n=2$ with $p=4$. Concerning $u^{\non}(t,\cdot)$ in the $L^2$ norm, the estimate \eqref{Est-Sub-01} still holds. For another, concerning the higher regularity of solution, by  using Corollary \ref{Coro-Linear-Est}, \eqref{Est-Lm-Gag-Nir} with $m=1$ and \eqref{Est-Hs-02}, we are able to deduce
\begin{align}
	\|u^{\non}(t,\cdot)\|_{\dot{H}^s}
	&\lesssim (1+t)^{-\frac{n+2+2s}{8}}\int_0^{t/2}(1+\tau)^{-\frac{n-1}{4}p+\frac{n}{4}+\varepsilon_1(\frac{2s-n}{2s}p+\frac{n}{s})}\mathrm{d}\tau\|u\|_{X_s(T)}^p\notag\\
	&\quad+(1+t)^{-\frac{(2n-2-\varepsilon_2)p+2s-4-n+\varepsilon_2}{8}+\varepsilon_1(\frac{2s-n+\varepsilon_2}{2s}p+\frac{4-2s+n-\varepsilon_2}{2s})+\frac{1}{4}}\|u\|_{X_s(T)}^p,\label{Est-6}
\end{align}
where we restricted $p\geqslant 2$ as well as $p>s-2$. Our assumption $p\geqslant\frac{n+2}{n-1}$ when $n\geqslant 2$ implies the estimate \eqref{Est-0}. Therefore, our aim \eqref{Est-Crucial-03} is completed excluding the special case.

For the limit case $n=2$ with $p=4$, the philosophies of the proof between $s\in[1,2]$ and $s\in[2,3)$ are the same. Thus, we only treat the situation for $s\in[3,\infty)$. The $L^2$, $L^6$ and $\dot{H}^1$ norms for $u^{\non}(t,\cdot)$ can be obtained by the same manner as the previous situations. It remains to study $u^{\non}(t,\cdot)$ for the $L^{\infty}$ and $\dot{H}^s$ norms. With the aid of \eqref{EST-01} and Corollary \ref{Coro-New}, it holds
\begin{align*}
\|u^{\non}(t,\cdot)\|_{L^{\infty}}&\lesssim\int_0^t(1+t-\tau)^{-\frac{1}{2}}(1+\tau)^{-\frac{3}{4}}\mathrm{d}\tau\|u\|_{X_s(T)}^4+\int_0^t\mathrm{e}^{-c(t-\tau)}(1+\tau)^{-\frac{3}{4}}\mathrm{d}\tau\|u\|_{X_s(T)}^4\\
&\lesssim (1+t)^{-\frac{1}{4}}\|u\|_{X_s(T)}^4.
\end{align*}
Lastly, by applying the interpolation for $\dot{H}^{s-1}$ additionally in $[t/2,t]$, we find
\begin{align*}
\|u^{\non}(t,\cdot)\|_{\dot{H}^s}&\lesssim\int_0^{t/2}(1+t-\tau)^{-\frac{s+1}{4}}\|\,|D|f(u(\tau,\cdot);4)\|_{L^1}\mathrm{d}\tau+\int_{t/2}^t(1+t-\tau)^{-\frac{1}{2}}\|f(u(\tau,\cdot);4)\|_{\dot{H}^{s-1}}\mathrm{d}\tau\\
&\quad+\int_0^t\mathrm{e}^{-c(t-\tau)}\|f(u(\tau,\cdot);4)\|_{\dot{H}^{s-2}}\mathrm{d}\tau\\
&\lesssim\int_0^{t/2}(1+t-\tau)^{-\frac{s+1}{4}}(1+\tau)^{-\frac{3}{4}}\mathrm{d}\tau\|u\|_{X_s(T)}^4+\int_{t/2}^t(1+t-\tau)^{-\frac{1}{2}}(1+\tau)^{-\frac{1}{2}-\frac{s}{4}}\mathrm{d}\tau\|u\|_{X_s(T)}^4\\
&\quad+\int_0^t\mathrm{e}^{-c(t-\tau)}(1+\tau)^{-\frac{1}{4}-\frac{s}{4}}\mathrm{d}\tau\|u\|_{X_s(T)}^4\\
&\lesssim(1+t)^{-\frac{s}{4}}\|u\|_{X_s(T)}^4,
\end{align*}
which finishes the proof of \eqref{Est-Crucial-01} under this situation.

To verify the Lipschitz condition \eqref{Est-Crucial-02}, taking the additional condition $p>s-1$ and using \eqref{Est-1}, we may claim
\begin{align*}
	\|f(u(\tau,\cdot);p)-f(\bar{u}(\tau,\cdot);p)\|_{\dot{H}^{s-2}}&\lesssim\|u(\tau,\cdot)-\bar{u}(\tau,\cdot)\|_{\dot{H}^{s-2}}\int_0^1\big\|G\big(\gamma u(\tau,\cdot)+(1-\gamma)\bar{u}(\tau,\cdot)\big)\big\|_{L^{\infty}}\mathrm{d}\gamma\\
	&\quad+\|u(\tau,\cdot)-\bar{u}(\tau,\cdot)\|_{L^{\infty}}\int_0^1\big\|G\big(\gamma u(\tau,\cdot)+(1-\gamma)\bar{u}(\tau,\cdot)\big)\big\|_{\dot{H}^{s-2}}\mathrm{d}\gamma,
\end{align*}
moreover,
\begin{align*}
\big\|G\big(\gamma u(\tau,\cdot)+(1-\gamma)\bar{u}(\tau,\cdot)\big)\big\|_{L^{\infty}}&\lesssim\|u(\tau,\cdot)\|_{L^{\infty}}^{p-1}+\|\bar{u}(\tau,\cdot)\|_{L^{\infty}}^{p-1},\\
\big\|G\big(\gamma u(\tau,\cdot)+(1-\gamma)\bar{u}(\tau,\cdot)\big)\big\|_{\dot{H}^{s-2}}&\lesssim \|\gamma u(\tau,\cdot)+(1-\gamma)\bar{u}(\tau,\cdot)\|_{\dot{H}^{s-2}}\|\gamma u(\tau,\cdot)+(1-\gamma)\bar{u}(\tau,\cdot)\|_{L^{\infty}}^{p-2}.
\end{align*}
Lastly, by using the fractional Sobolev embedding and the fractional Gagliardo-Nirenberg inequality, we may derive \eqref{Est-3}. Our proof is totally complete.

\subsection{Proof of Theorem \ref{Thm-Global-Profiles}}
$\ \ \ \ $As a preparation of studying asymptotic behavior of global (in time) solution for the nonlinear dissipative Boussinesq equation \eqref{Eq-NonLinear-Dissipative-Boussinesq}, we introduce the next preliminary.  According to the global (in time) existence result proved in Section \ref{Section-GESDS}, under the hypotheses in Theorem \ref{Thm-GESDS}, the nonlinearity fulfills the estimates \eqref{Est-Lm-Gag-Nir}-\eqref{Est-Hs-02}. Moreover, we already derived $\|u\|_{X_s(T)}\lesssim\|(u_0,u_1)\|_{\ml{A}_s}$ uniformly in time $T>0$. The next estimate is obtained in the proof of Theorem \ref{Thm-GESDS}. So, we omit the detail.
\begin{prop}\label{Prop-Nonlinear-Refined}
	Let the exponent $p$, the regularity parameter $s$ and initial data $u_0,u_1$ satisfying the same hypotheses as those in Theorem \ref{Thm-GESDS}. Additionally, we assume $p>\frac{n+2}{n-1}$ for $n\geqslant 2$. Then, the global (in time) Sobolev solution introduced in Theorem \ref{Thm-GESDS} fulfills the following refined estimates:
	\begin{align*}
		\left\|\int_0^tK_1(t-\tau,\cdot)\ast_{(x)}\Delta f(u(\tau,\cdot);p)\mathrm{d}\tau\right\|_{\dot{H}^s}=o(\ml{B}_{n,s}(t))
	\end{align*}
	for large time $t\gg1$, where the time-dependent function $\ml{B}_{n,s}(t)$ is denoted in \eqref{Bns}.
\end{prop}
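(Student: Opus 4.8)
The goal is to show that the Duhamel (nonlinear) term $u^{\non}(t,x)=\int_0^tK_1(t-\tau,\cdot)\ast_{(x)}\Delta f(u(\tau,\cdot);p)\mathrm{d}\tau$ decays in $\dot{H}^s$ strictly faster than the profile rate $\ml{B}_{n,s}(t)$, so that it is negligible against $u^{(1,p)}$. The whole computation runs in parallel with the crucial nonlinear estimate \eqref{Est-Crucial-03} already established in Section \ref{Section-GESDS}: I would re-run exactly those integral estimates, inserting the same nonlinear bounds \eqref{Est-Lm-Gag-Nir}, \eqref{Est-Hs-01} and \eqref{Est-Hs-02} according to the three ranges $s\in[1,2]$, $s\in(2,\frac n2]$ and $s\in(\frac n2,\infty)$. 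The one decisive difference is that the extra hypothesis $p>\frac{n+2}{n-1}$ is a \emph{strict} strengthening of the non-strict condition $p\geqslant\frac{n+2}{n-1}$ used for global existence; this strict gap is a fixed positive surplus in the exponents and is precisely what upgrades the $O(\ml{B}_{n,s}(t))$ bounds of Theorem \ref{Thm-GESDS} to $o(\ml{B}_{n,s}(t))$. Note that the delicate limit case $n=2$, $p=4$ is automatically excluded here, since $\frac{n+2}{n-1}=4$ forces $p>4$; hence the standard space $X_s$ and the estimates \eqref{Est-Lm-Gag-Nir}--\eqref{Est-Hs-02} suffice and no $L^6$/$L^\infty$ machinery is required.

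First I would isolate the structural gain coming from the Laplacian in the forcing. Writing $\Delta f(u)=|D|^2f(u)$ and moving the two derivatives onto the kernel, the small-frequency part of $K_1(t-\tau,\cdot)\ast\Delta f$ behaves in $\dot{H}^s$ like $\|\ml{G}_1(t-\tau,\cdot)\|_{\dot{H}^{s+2}}$, which by Lemma \ref{Lem-Optimal-Est} carries the rate $(t-\tau)^{-\frac{2s+n+2}{8}}$, i.e.\ it is smaller than the leading rate $\ml{B}_{n,s}(t)=t^{-\frac{2s+n-2}{8}}$ (respectively $\ml{D}_n(t)$ when $s=0$) by the fixed factor $(t-\tau)^{-1/2}$. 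Conceptually this reflects the vanishing total mass $\int_{\mb{R}^n}\Delta f(u)\,\mathrm{d}x=0$, so the forcing cannot excite the leading diffusion-wave profile $\ml{G}_1P$.

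Then I would split $\int_0^t=\int_0^{t/2}+\int_{t/2}^t$ and estimate exactly as in \eqref{Est-4}, \eqref{Est-5} and \eqref{Est-6}. On $[0,t/2]$ one uses the $(L^2\cap L^1)-L^2$ bounds of Corollary \ref{Coro-Linear-Est}, factors out the kernel rate $(1+t)^{-\frac{2(s+1)+n}{8}}=(1+t)^{-1/2}\ml{B}_{n,s}(t)$ by the asymptotics $(1+t-\tau)\approx(1+t)$, and is left with $\int_0^{t/2}\|f(u(\tau,\cdot);p)\|_{\dot{H}^{(s-2)_+}\cap L^1}\mathrm{d}\tau$; a direct computation shows that this time integral grows strictly slower than $t^{1/2}$ exactly when $p>\frac{n+2}{n-1}$, so the whole piece is $o(\ml{B}_{n,s}(t))$. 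On $[t/2,t]$ one uses the $L^2-L^2$ bounds, factors out the nonlinear decay at $\tau\approx t$, and integrates the singular kernel $\int_{t/2}^t(1+t-\tau)^{-\theta}\mathrm{d}\tau$; the resulting exponent is again strictly below $-\frac{2s+n-2}{8}$, the condition in each of the three regimes reducing once more to the single inequality $p>\frac{n+2}{n-1}$.

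The main obstacle is bookkeeping rather than a new idea: one must verify this strict exponent inequality in each regime with the appropriate nonlinear estimate, and keep track of the small loss $\varepsilon_1$ (and, in the high-regularity case, $\varepsilon_2$) present when $n=2$. Since $p>\frac{n+2}{n-1}$ is strict, the surplus is a fixed positive number, so choosing $\varepsilon_1,\varepsilon_2>0$ sufficiently small preserves every strict inequality and does no harm. Collecting the two pieces then yields $\|u^{\non}(t,\cdot)\|_{\dot{H}^s}=o(\ml{B}_{n,s}(t))$ for large time $t\gg1$, as claimed.
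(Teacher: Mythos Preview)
Your proposal is correct and follows exactly the line the paper intends: the paper itself does not give an independent argument but simply writes ``The next estimate is obtained in the proof of Theorem \ref{Thm-GESDS}. So, we omit the detail.'' Your re-running of the integral estimates \eqref{Est-4}, \eqref{Est-5}, \eqref{Est-6} with the strict hypothesis $p>\frac{n+2}{n-1}$ (which also eliminates the limit case $n=2$, $p=4$) to upgrade $O(\ml{B}_{n,s}(t))$ to $o(\ml{B}_{n,s}(t))$, and your observation that the free small parameters $\varepsilon_1,\varepsilon_2$ can absorb the fixed surplus, is precisely the computation being invoked.
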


Thanks to Theorem \ref{Thm-GESDS}, the global (in time) solution
\begin{align*}
u(t,x)=\sum\limits_{j=0,1}K_j(t,x)\ast_{(x)}u_j(x)+\int_0^tK_1(t-\tau,x)\ast_{(x)}\Delta f(u(\tau,x);p)\mathrm{d}\tau
\end{align*}
fulfills the following estimates:
\begin{align}\label{Est-new-02}
\left\|u(t,\cdot)-u^{(1,p)}(t,\cdot)\right\|_{\dot{H}^s}&\leqslant\left\|K_0(t,\cdot)\ast_{(x)}u_0(\cdot)+K_1(t,\cdot)\ast_{(x)}u_1(\cdot)-\ml{G}_1(t,\cdot)P_{u_1}\right\|_{\dot{H}^s}+o(\ml{B}_{n,s}(t))\notag\\
&=o(\ml{B}_{n,s}(t))
\end{align}
for large time $t\gg1$, where we used \eqref{Est-new} and Proposition \ref{Prop-Nonlinear-Refined}.

Furthermore, by applying the Minkowski inequality associated with \eqref{Est-new-02}, it gives
\begin{align*}
\|u(t,\cdot)\|_{\dot{H}^s}
\gtrsim\|\ml{G}_1(t,\cdot)\|_{\dot{H}^s}|P_{u_1}|-\left\|u(t,\cdot)-u^{(1,p)}(t,\cdot)\right\|_{\dot{H}^s}\gtrsim\ml{B}_{n,s}(t)|P_{u_1}|
\end{align*}
for large time $t\gg1$, provided $|P_{u_1}|\neq0$. Our proof is completed.

\section{Final remarks}
$\ \ \ \ $Throughout this paper, we have derived optimal estimates and large time asymptotic profile of global (in time) solution to the nonlinear dissipative Boussinesq equation \eqref{Eq-NonLinear-Dissipative-Boussinesq} by deep analysis on the corresponding linearized model. We believe that our philosophies can be applied in studying sharp large time behaviors for the Boussinesq equation with other damping terms, e.g. the general structural damping, namely,
\begin{align*}
	\begin{cases}
		u_{tt}-\Delta u+\Delta^2u+2\mu(-\Delta)^{\theta}u_t=\Delta f(u;p),&x\in\mb{R}^n,\ t>0,\\
		u(0,x)=u_0(x),\ u_t(0,x)=u_1(x),&x\in\mb{R}^n,
	\end{cases}
\end{align*}
with $\theta\in[0,2]$, in which the limit case $\theta=2$ has been completed in the manuscript. Moreover, the influence of dispersion, dissipation and nonlinearity on asymptotic behavior of global (in time) solution is still interesting. In our forthcoming works, we will answer the above equations even in the $L^q$ framework with $q\in[1,\infty]$.

\section*{Acknowledgments}
Wenhui Chen is supported in part by the National Natural Science Foundation of China (grant No. 12301270, No. 12171317) and Guangdong Basic and Applied Basic Research Foundation (grant No. 2023A1515012044).  Hiroshi Takeda is supported in part by the Grant-in-Aid for Scientific Research (C)
(grant No. 19K03596) from Japan Society for the Promotion of Science.

\end{document}